\documentclass[11pt]{amsart}

\usepackage{amsthm,amssymb,amsmath,amstext,amscd,amsfonts,amsbsy,amsxtra,latexsym,hyperref,color}%,tikz,MnSymbol
\hypersetup{colorlinks=true,linkcolor=blue,citecolor=magenta}
\usepackage{fullpage}
\usepackage[english]{babel}
\usepackage[latin1]{inputenc}
\usepackage[all,cmtip]{xy}
\usepackage{comment,enumitem,bbm}
\newcommand{\field}[1]{\mathbb{#1}}
\newcommand{\N}{\field{N}}
\newcommand{\Z}{\field{Z}}
\newcommand{\R}{\field{R}}
\newcommand{\C}{\field{C}}
\newcommand{\Q}{\field{Q}}
\newcommand{\QQ}{\mathcal{Q}}
\newcommand{\HH}{\mathbb{H}}
\renewcommand{\H}{\mathbb{H}}
\newcommand\smod[1]{\ \left(\operatorname{mod} #1\right)}
\newcommand{\sgn}{\operatorname{sgn}}
\newcommand{\SL}{\operatorname{SL}}

\newcommand{\im}{\text{Im}}
\newcommand{\re}{\text{Re}}
\newcommand{\f}{\mathbbm{f}}
\makeatletter
\newcommand{\superimpose}[2]{%
  {\ooalign{$#1\@firstoftwo#2$\cr\hfil$#1\@secondoftwo#2$\hfil\cr}}}
\makeatother
\allowdisplaybreaks 

\numberwithin{equation}{section}
\newtheorem{theorem}{\textbf{Theorem}}
\newtheorem{definition}[theorem]{\textbf{Definition}}
\numberwithin{theorem}{section}
\numberwithin{table}{section}
\newtheorem{lemma}[theorem]{\textbf{Lemma}}
\newtheorem{proposition}[theorem]{\textbf{Proposition}}
\newtheorem{corollary}[theorem]{\textbf{Corollary}}
\newtheorem*{theorem*}{Theorem}
\theoremstyle{remark}
\newtheorem*{remark}{Remark}
\newtheorem*{remarks}{Remarks}

\renewenvironment{proof}[1][Proof]{\begin{trivlist}
\item[\hskip \labelsep {\bfseries #1:}]}{\qed\end{trivlist}}

%%%%%%%%%%%%%%%%%%%%%%%%%%%%%%%%%%%%%%%%%%%%
%Stephan's macros
\newcommand{\abs}[1]{\left\vert#1\right\vert}

\DeclareMathOperator{\Mp}{Mp}
\DeclareMathOperator{\Gr}{Gr}

\newcommand{\smallTmatrix}{\left(\begin{smallmatrix}1 & 1 \\ 0 & 1\end{smallmatrix}\right)}
\newcommand{\smallSmatrix}{\left(\begin{smallmatrix}0 & -1 \\ 1 & 0\end{smallmatrix}\right)}

\newcommand{\smallabcd}{\left(\begin{smallmatrix}a & b \\ c & d\end{smallmatrix}\right)}

\newcommand{\calL}{\mathcal{L}}
\newcommand{\calM}{\mathcal{M}}
\newcommand{\calN}{\mathcal{N}}

\newcommand{\frake}{\mathfrak{e}}

%%%%%%%%%%%%%%%%%%%%%%%%%%%%%%%%%%%%%%%%%%%%%%%

\begin{document}
\title{Central $L$-values of elliptic curves and local polynomials}
\date{\today}
\author{Stephan Ehlen}
\address{Weyertal 86-90, Universit\"at zu K\"oln, Mathematisches Institut, D-50931 K\"oln, Germany}
\email{mail@stephanehlen.de}
\author{Pavel Guerzhoy}
\address{Department of Mathematics\\
University of Hawaii\\ 
2565 McCarthy Mall \\
Honolulu, HI 96822 }
\email{pavel@math.hawaii.edu}
\author{Ben Kane} 
\address{Mathematics Department, University of Hong Kong, Pokfulam, Hong Kong}
\email{bkane@hku.hk}
\author{Larry Rolen}
\address{Department of Mathematics\\
Vanderbilt University \\
Nashville, TN 37240}
\email{larry.rolen@vanderbilt.edu}

\subjclass[2010]{11F37,11F11,11E76,11M20}
\keywords{locally harmonic Maass forms, vanishing of central $L$-values, congruent number problem}
\thanks{Part of the research was carried out while the first author was a postdoctoral fellow at the CRM and McGill University in Montreal. The research of the third author was supported by grant project numbers HKU 27300314, 17302515, 17316416, and 17301317  of the Research Grants Council of Hong Kong SAR.  Part of the research was carried out while the the fourth researcher held posdoctoral positions at the University of Cologne and the Pennsylvania State University, an assistant professorship at Trinity College, Dublin, and a visiting position at the Georgia Institute of Technology. The fourth author thanks the University of Cologne and the DFG for their generous support via the University of Cologne postdoc grant DFG Grant D-72133-G-403-151001011. }

\begin{abstract}
Here we study the recently-introduced notion of a locally harmonic Maass form and its applications to the theory of $L$-functions. In particular, we find a criterion for vanishing of certain twisted central $L$-values of a family of elliptic curves, whereby vanishing occurs precisely when the values of two finite sums over canonical binary quadratic forms coincide. This yields a vanishing criterion based on vastly simpler formulas than the formulas related to work of Birch and Swinnerton-Dyer to determine such $L$-values, and extends beyond their framework to special non-CM elliptic curves. 
\end{abstract}

\maketitle

\section{Introduction and Statement of Results}
A celebrated result in analytic number theory, due to Dirichlet, is his so-called \emph{class number formula}. This formula gives a deep relation between values of quadratic $L$-functions and class numbers of quadratic fields, which enumerate equivalence classes of binary quadratic forms. Given the historical importance of this result, it is natural to ask if similar formulas exist for other $L$-functions. A natural candidate to search for such formulas is the $L$-functions associated to rational elliptic curves. In this case, such formulas exist in special cases. For example, in a famous paper of Tunnell \cite{Tunnell}, he gave a ``strange'' formula for the $L$-values of the quadratic twists of the strong Weil curve of conductor 32, which is well-known to be related to the study of congruent numbers. Moreover, beautiful formulas involving specializations of the Weierstrass $\zeta$-function were given for such $L$-values in the case of CM elliptic curves by Birch and Swinnerton-Dyer in \cite{BSDNotes}.

Here, for rational elliptic curves for a special sequence of conductors and a natural family of fundamental discriminants $D$, we produce relatively simple formulas (taking the difference of the two sides of Theorem \ref{mainthm} below) which vanish precisely when the $L$-values of the $D$th quadratic twists of these elliptic curves vanish. These formulas mirror the classical Dirichlet class number formula in the sense that they relate (the vanishing of) central $L$-values of a set of elliptic curves to canonical sums over binary quadratic forms. Our proof is very different than those previously obtained in the literature, as it uses in an essential way the burgeoning theory of locally harmonic Maass forms defined in \cite{BKK}.

To describe this result, we first require some notation. For a discriminant $\delta$, we denote the set of binary quadratic forms of discriminant $\delta$ by $\QQ_{\delta}$. Splitting $\delta=DD_0$ for two discriminants $D_0$ and $D$, we require certain canonical sums over quadratic forms given  for $x\in \R$ by 
\[
F_{0,N,D,D_0}(x):=\sum_{\substack{Q=[a,b,c]\in \QQ_{D_0D}\\ Q(x)>0>a\\ N\mid a}}\chi_{D_0}(Q),
\]
where $\chi_{D_0}$ is the genus character, and by abuse of notation we set $Q(x)$ to be $Q(x,1)$. For $x\in\Q$, this sum is furthermore finite. Our main result is the following, where we say that $N$ is a \emph{dimension one level} if $\dim(S_2(N))=1$ and the definition of a \emph{good} discriminant for a level $N$ is given in Section \ref{sec:gooddiscs} (see Table \ref{tab:discs}).
\begin{remark}
For $x\in \Q$, one can interpret $F_{0,N,D,D_0}$ as a weighted intersection number between the union of the geodesics associated to the quadratic forms 
\[
\mathcal{Q}_{N,\Delta}:=\left\{Q=[a,b,c]\in \QQ_{\Delta}: Q(x)>0>a, N\mid a\right\}
\]
 and the vertical geodesic from $x$ to $i\infty$, weighted with the genus character $\chi_{D_0}$. We denote the elements of $\mathcal{Q}_{N,\Delta}$ intersecting the vertical line from $x$ to $i\infty$ by $\mathcal{S}_{N,\Delta}(x)$ and note that the sum with trivial weighting yields
\[
F_{0,N,DD_0,1}(x)=\#\mathcal{S}_{N,DD_0}(x).
\]
\end{remark}

\begin{theorem}\label{mainthm}
Let $N$ be a dimension one level, and suppose that $D$ is a good fundamental discriminant for $N$. Denote the elliptic curve corresponding to the unique normalized weight 2 newform of level $N$ by $E$ and let $E_D$ be its $D$th quadratic twist. Let $D_0<0$ be given as in the row for level $N$ in Table \ref{tab:discs}. Then for each good fundamental discriminant $D$ in Table \ref{tab:discs} such that $|DD_0|$ is not a square, we have $L(E_D,1)=0$ if and only if the corresponding pair $(x_{N,1},x_{N,2})$ in Table \ref{tab:discs} satisfies 
\[
F_{0,N,D,D_0}\!\left(x_{N,1}\right)= F_{0,N,D,D_0}\!\left(x_{N,2}\right).
\]
\end{theorem}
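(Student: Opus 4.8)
The plan is to deduce the vanishing criterion from a twisted Waldspurger/Kohnen--Zagier-type formula, after reinterpreting the finite sums $F_{0,N,D,D_0}$ as the local polynomial of a weight $0$ locally harmonic Maass form attached to the newform $f_E$ associated to $E$. First I would exploit the rigidity built into the hypotheses: since $N$ is a dimension one level, $S_2(N)$ is spanned by the single newform $f_E$, so the Shintani-type theta correspondence between half-integral and integral weight is as constrained as possible, and the data in Table~\ref{tab:discs} is chosen precisely so that the auxiliary central value $L(E_{D_0},1)$ is nonzero and $|DD_0|$ is not a square, which rules out degenerate split forms. Writing $\doteq$ for equality up to an explicitly nonzero constant, the twisted Waldspurger formula for the genus character $\chi_{D_0}$ splitting $D_0 D$ gives
\[
L(E_D,1)\,L(E_{D_0},1)\;\doteq\;\Bigl|\,\sum_{Q\in\QQ_{D_0D}/\Gamma_0(N)}\chi_{D_0}(Q)\int_{C_Q} f_E(z)\,dz\,\Bigr|^2,
\]
so that, once $L(E_{D_0},1)\neq 0$ is known, $L(E_D,1)=0$ if and only if this twisted trace of cycle integrals vanishes.

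The second step is to realize the twisted trace through the locally harmonic Maass form machinery of \cite{BKK}. In weight $2k=2$ the associated form $\mathcal{F}$ has weight $2-2k=0$, so its local polynomial has degree $2k-2=0$, i.e.\ it is locally constant on $\H$ minus the union of the geodesics $C_Q$ and jumps by $\pm\chi_{D_0}(Q)$ each time $C_Q$ is crossed. Comparing with the intersection-number description in the Remark preceding Theorem~\ref{mainthm}, I would identify $F_{0,N,D,D_0}(x)$, for $x\in\Q$, with the value of this local polynomial read off along the vertical geodesic from $x$ to $i\infty$; the conditions $Q(x)>0>a$ and $N\mid a$ are exactly the bookkeeping that selects which geodesics this vertical ray meets together with their orientation. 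The essential structural input from \cite{BKK} is that $\mathcal{F}$ is a genuine modular object of weight $0$ whose image under $\xi_0$ is a nonzero multiple of $f_E$, which is what ties the jumps of the local polynomial back to periods of $f_E$.

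The third and decisive step is to evaluate the difference $F_{0,N,D,D_0}(x_{N,1})-F_{0,N,D,D_0}(x_{N,2})$ as a cycle integral. The two rational points recorded in Table~\ref{tab:discs} are chosen so that the net $\chi_{D_0}$-weighted count of geodesics $C_Q$ separating $x_{N,2}$ from $x_{N,1}$ is precisely the twisted trace appearing in the first step: the difference telescopes the local constant across the chambers lying between the two vertical rays, and this signed count reassembles into $\sum_Q\chi_{D_0}(Q)\int_{C_Q} f_E$ over the relevant $\Gamma_0(N)$-classes. By Eichler--Shimura the cycle integrals of $f_E$ lie in a rank-one rational piece of the period lattice, so this integer-valued difference is a fixed nonzero rational multiple of $L(E_D,1)$ divided by the appropriate period. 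Since the central value is nonnegative and its vanishing governs the rank, I would conclude
\[
L(E_D,1)=0 \quad\Longleftrightarrow\quad F_{0,N,D,D_0}(x_{N,1})=F_{0,N,D,D_0}(x_{N,2}),
\]
which is the assertion of Theorem~\ref{mainthm}.

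I expect the main obstacle to be this third step: pinning down the exact normalization forcing the integer difference of the two counts to be proportional to the \emph{central} value rather than to some other period, and verifying that the specific pair $(x_{N,1},x_{N,2})$ in Table~\ref{tab:discs} isolates the central $L$-value cleanly. This requires carefully tracking constants through the theta lift and the genus-character decomposition, and leaning on the dimension-one and good-discriminant hypotheses to guarantee that every auxiliary factor (in particular $L(E_{D_0},1)$ and the relevant local densities) is nonzero, so that the vanishing is transferred faithfully from the analytic side to the combinatorial equality of the two finite sums.
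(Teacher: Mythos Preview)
Your overall architecture matches the paper's: construct a weight $0$ locally harmonic Maass form $\mathcal{F}_{0,N,D_0,D}$, identify $F_{0,N,D,D_0}(x)$ with its locally constant part, and link the cuspidal projection of the associated weight $2$ form to $L(E_D,1)$ via Waldspurger. The paper carries this out in Corollary~\ref{cor:FchiD}, Proposition~\ref{prop:Pval}, and Lemma~\ref{lem:Lval}, exactly as you outline in your first two steps.

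The genuine gap is in your third step. The jumps of the local constant across the geodesics $C_Q$ are \emph{not} the cycle integrals $\int_{C_Q} f_E$; each jump is a fixed multiple of $\chi_{D_0}(Q)$ alone (see Proposition~\ref{prop:Pval}). So the difference $F_{0,N,D,D_0}(x_{N,1})-F_{0,N,D,D_0}(x_{N,2})$ is a weighted signed count, and your telescoping cannot by itself ``reassemble into $\sum_Q\chi_{D_0}(Q)\int_{C_Q} f_E$''. The paper's mechanism is different: one uses the decomposition $\mathcal{F}=\mathcal{P}+\alpha f^*+\beta\mathcal{E}_f$ from \eqref{decomplocalmaass} and the $\Gamma_0(N)$-invariance of $\mathcal{F}$ to obtain
\[
\mathcal{P}\big|_0(M_0-1)=-a_D\bigl(\mathcal{E}_f\big|_0(M_0-1)+f^*\big|_0(M_0-1)\bigr),
\]
where $M_0\in\Gamma_0(N)$ sends $x_{N,1}$ to $x_{N,2}$, and $a_D$ is the coefficient of $f_E$ in the cuspidal part of $f_{1,N,D,D_0}$. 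The bracketed factor is a single Eichler period of $f_E$, a constant $c$ depending only on $N$ and $M_0$ and \emph{not} on $D$. Thus the equivalence reduces to showing $c\neq 0$, which the paper does not extract from any formula but by a bootstrapping computation: for one good $D$ listed in Table~\ref{tab:discs} one checks numerically that $F_{0,N,D,D_0}(x_{N,1})\neq F_{0,N,D,D_0}(x_{N,2})$, forcing $c\neq 0$; this same check simultaneously supplies the admissible $m_0$ needed in Lemma~\ref{lem:Lval}. Your proposal does not provide any argument for this nonvanishing and instead asserts that the pair $(x_{N,1},x_{N,2})$ was chosen to make the difference equal the twisted trace, which is not how the pair is selected.

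A secondary issue: your step~1 invokes a Kohnen--Zagier formula, but several levels in Table~\ref{tab:discs} (e.g.\ $27$, $32$, $36$, $49$) are not squarefree and odd, so Theorem~\ref{thm:Kohnen} does not apply. The paper circumvents this by working instead with the generating series $\Omega_{N,k}$ and Waldspurger's theorem (Lemma~\ref{lem:Lval}), which only requires that the half-integral weight eigenform attached to $f_E$ via $\langle f_E,\Omega_{N,1}(\cdot,\tau;D_0)\rangle$ be nonzero; this is again certified by the single numerical example.
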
 
Before discussing some arithmetically interesting corollaries that arise from Theorem \ref{mainthm}, we give a broad overview of the strategy used to prove Theorem \ref{mainthm} with some of the technical issues removed. We use the theory of theta lifts (see Section \ref{sec:theta-general}) to construct a function $\mathcal{F}_{0,N,D,D_0}$ which is a $\Gamma_0(N)$-invariant locally harmonic Maass form (roughly speaking, this means that it is invariant under the action of $\Gamma_0(N)$ and it is harmonic in certain connected components of the upper half plane $\H$, but may be discontinuous across certain geodesics). Every locally harmonic Maass form splits naturally into three components, a continuous holomorphic part, a continuous non-holomorphic part, and a locally polynomial part. In the case of invariant forms, this polynomial is moreover a constant. The function $F_{0,N,D,D_0}$ turns out to be the locally constant part of $\mathcal{F}_{0,N,D,D_0}$. The locally constant part of an invariant locally harmonic Maass form is generally only itself invariant when the other two parts vanish. By relating the continuous holomorphic and non-holomorphic parts of $\mathcal{F}_{0,N,D,D_0}$ to a special weight $2$ cusp form $f_{1,N,D,D_0}$ which was studied by Kohnen in \cite{KohnenCoeff} and shown to be related to central $L$-values, we obtain a link between the $\Gamma_0(N)$-invariance of $F_{0,N,D,D_0}$ and the vanishing of central $L$-values. We finally determine a pair of rational numbers $x_{N,1}$ and $x_{N,2}$ that are related via $\Gamma_0(N)$ such that $F_{0,N,D,D_0}$ is invariant under $\Gamma_0(N)$ if and only if it agrees at these two points. The main technical difficulties arise from the fact that the naive definitions of $\mathcal{F}_{0,N,D,D_0}$ and $f_{1,N,D,D_0}$ do not converge absolutely, requiring an analytic continuation to $s=0$ in a separate variable $s$. There are two standard approaches to construct this analytic continuation, one known as Hecke's trick and another using spectral parameters. Kohnen uses Hecke's trick in \cite{KohnenCoeff}, while theta lifts are more complementary with the spectral theory (see Section \ref{sec:theta-lifts}), which we use to construct a parallel function $\f_{1,N,D_0,D}$ via theta lifts. We are required to show that these two constructions agree at $s=0$ (see Lemma \ref{lem:f1fchi}) even though they do not agree for general $s$ because the properties of these functions differ for $s\neq 0$.
\begin{remarks}
\noindent

\noindent
\begin{enumerate}[leftmargin=*,label={\rm(\arabic*)}]
\item It would be very interesting to extend the results of Theorem \ref{mainthm} to more general conductors. However, there are several technical issues which arise. It is important to note that since we restrict to dimension one levels, the results in Theorem \ref{mainthm} only yield formulas for finitely many $N$. Similar formulas exist for more general $N$, but one must apply projection operators into the corresponding eigenspaces, yielding more complicated formulas with Hecke operators acting on the formulas. Some of the issues related to non-dimension one levels are discussed by the third author's Ph.D. student Kar Lun Kong in \cite{KongPhD}.  We concentrate in this paper on dimension one levels in order to obtain explicit formulas and then mainly concentrate on certain combinatorial applications of Theorem \ref{mainthm} for which the level $N$ happens to be a dimension one level.

\item
Using Waldspurger's Theorem in the form of the Kohnen--Zagier formula \cite{KohnenZagier}, there is an analogous result for vanishing and non-vanishing of central values for $L$-series attached to weight $k>2$ Hecke eigenforms and $N$ is squarefree and odd. In that case, each term in the sum over $Q$ defining $F_{0,N,D,D_0}$ above is multiplied by $Q(x)^{k-1}$. The main focuses of this paper are to extend this to include weight $2$ via Hecke's trick and also to consider other levels which are not necessarily squarefree and odd.
\item 
The formulas $F_{0,N,D,D_0}(x_{N,j})$ may be written rather explicitly. To demonstrate the type of formulas that one obtains, we explicitly write the formulas for $N=32$. Firstly, for all $N$ we have 
\[
F_{0,N,D,D_0}(0)=\sum_{\substack{Q=[a,b,c]\in \QQ_{D_0D}\\ c>0>a\\ N\mid a}}\chi_{D_0}(Q).
\]
In the case of $D_0=-3$, the genus character can be simplified to 
\[
\chi_{-3}([a,b,c])=\begin{cases}
\left(\frac{-3}{a}\right)&\text{ if } 3\nmid a\\
 \left(\frac{-3}{c}\right)&\text{ if }3|a.
\end{cases}
\]
For $x_{32,2}=1/3$, the sum becomes 
\[
F_{0,32,D,-3}\!\left(\frac{1}{3}\right)=\sum_{\substack{Q=[a,b,c]\in \QQ_{D_0D}\\ a+3b+9c>0>a\\ 32\mid a}}\chi_{-3}(Q).
\]
\end{enumerate}
\end{remarks}

To illustrate Theorem \ref{mainthm}, we describe explicit versions of Theorem \ref{mainthm} related to combinatorial questions. Firstly, recall that a squarefree integer $n$ is called \begin{it}congruent\end{it} if $n$ is the area of a right triangle with rational side lengths and \begin{it}noncongruent\end{it} otherwise. The case $N=32$ of Theorem \ref{mainthm} is well-known to apply to the study of congruent numbers by Tunnell's work \cite{Tunnell}.
\begin{corollary}\label{maincor}
Let $D<0$ be a fundamental discriminant $|D|\equiv 3\pmod{8}$ such that $3|D|$ is not a square, and let $E$ be the congruent number curve of conductor $32$. Then $F_{0,32,D,-3}\left(0\right)=F_{0,32,D,-3}(1/3)$ if and only if $L(E_D,1)=0$. In particular, if $|D|$ is congruent, then $F_{0,32,D,-3}\left(0\right)=F_{0,32,D,-3}(1/3)$ and the converse is true assuming the Birch and Swinnerton-Dyer conjecture (hereafter BSD).
\end{corollary}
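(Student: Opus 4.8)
**The plan is to derive Corollary \ref{maincor} as a direct specialization of Theorem \ref{mainthm} to the level $N=32$, together with Tunnell's classical connection between the congruent number curve and its quadratic twists.** First I would verify that $N=32$ is a dimension one level, i.e. that $\dim(S_2(32))=1$; the unique normalized newform is the weight $2$ form attached to the congruent number curve $E$ of conductor $32$. Next I would confirm, by consulting the relevant row of Table \ref{tab:discs}, that for $N=32$ the accompanying auxiliary discriminant is $D_0=-3$ and the canonical pair of rational points is $(x_{32,1},x_{32,2})=(0,1/3)$. I would then check that the hypothesis on $D$ in the corollary, namely that $D<0$ is a fundamental discriminant with $|D|\equiv 3\pmod 8$, exactly matches the ``good fundamental discriminant'' condition recorded in Table \ref{tab:discs} for this level, and that the assumption ``$3|D|$ is not a square'' is precisely the requirement that $|DD_0|=|D|\cdot 3$ be a non-square appearing in the statement of Theorem \ref{mainthm}.

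Once these dictionary checks are in place, the central biconditional is immediate: Theorem \ref{mainthm} asserts that $L(E_D,1)=0$ if and only if $F_{0,32,D,-3}(x_{32,1})=F_{0,32,D,-3}(x_{32,2})$, which under the above identifications reads $F_{0,32,D,-3}(0)=F_{0,32,D,-3}(1/3)$. This yields the first sentence of the corollary verbatim. It remains only to translate the statement into the language of congruent numbers.

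For the final sentence, I would invoke the classical theory of congruent numbers: a squarefree positive integer $n$ is congruent if and only if the elliptic curve $y^2=x^3-n^2 x$ has positive rank, and this curve is the $D$th quadratic twist $E_D$ of $E$ for the appropriate fundamental discriminant $D$ with $|D|=n$ (up to the standard normalization relating $n$ and the discriminant). By the theorem of Coates--Wiles, or equivalently by the known analytic behavior in this rank setting, if $E_D$ has positive rank then $L(E_D,1)=0$; hence ``$|D|$ congruent'' forces $L(E_D,1)=0$ and therefore $F_{0,32,D,-3}(0)=F_{0,32,D,-3}(1/3)$. For the converse direction one needs that $L(E_D,1)=0$ implies positive rank, which is exactly the part of BSD (the implication from analytic rank to algebraic rank) that is not known unconditionally in the relevant regime; this accounts for the conditional phrasing ``assuming BSD.''

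**The one genuine obstacle is bookkeeping rather than mathematical depth:** I must be scrupulous about the precise normalization linking the integer $n$, the squarefree/fundamental-discriminant condition, and the twist parameter $D$, since the congruent number problem is usually phrased for $y^2=x^3-n^2x$ while Theorem \ref{mainthm} is phrased for quadratic twists indexed by a fundamental discriminant $D<0$. In particular I would need to confirm that the congruence $|D|\equiv 3\pmod 8$ is the correct arithmetic condition isolating the family to which Tunnell's criterion and the good-discriminant table both apply, and that the non-square hypothesis $|DD_0|=3|D|$ and the negativity of $D_0=-3$ are consistent with $E_D$ being the congruent number curve. Apart from verifying these normalizations, the corollary follows formally from Theorem \ref{mainthm} and the standard (conditional, via BSD) equivalence between the vanishing of the central $L$-value and congruence of $|D|$.
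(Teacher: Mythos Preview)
Your proposal is correct and follows essentially the same approach as the paper: the first claim is obtained by specializing Theorem \ref{mainthm} to $N=32$ using the data $D_0=-3$ and $(x_{32,1},x_{32,2})=(0,1/3)$ from Table \ref{tab:discs}, and the second claim is deduced from the classical equivalence between congruence of $|D|$ and infinitude of $E_D(\Q)$, with Coates--Wiles supplying the unconditional direction and BSD the converse. The paper's own proof is terser but structurally identical; your extra care with the dictionary between good discriminants, the twist parameter, and the congruent-number normalization is appropriate and does not diverge from the intended argument.
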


\begin{remark}

\noindent
In \cite{Skoruppa}, Skoruppa used the theory of skew-holomorphic Jacobi forms to obtain a similar-looking condition for fundamental discriminants $D$ congruent to $1$ modulo $8$.  One can use the theory here with a slightly modified genus character to obtain a condition equivalent to Skoruppa's result. However, his method appears to have an inherent dependence on the quadratic residue of $D$ modulo $8$; a subtle generalization of his result may lead to a condition identical to that obtained in Corollary \ref{maincor}, but it does not seem to directly follow from his theory in its current form.

\end{remark}

Table \ref{tab:maincor} (constructed via GP/Pari code running for a couple of hours) illustrates the usage of Corollary \ref{maincor} in checking for congruent numbers. Note that the calculation for Corollary \ref{maincor} only proves that non-congruent numbers are not congruent (it would be an equivalence under the assumption of BSD); the congruent numbers may in practice be explicitly shown to be congruent, although this is a non-trivial exercise; see Figure 1.3 in \cite{Koblitz} for an example of Don Zagier's which shows that $157$ is congruent. 
\begin{center}
\begin{table}[th]\caption{Congruent numbers and Corollary \ref{maincor}\label{tab:maincor}}
\begin{tabular}{|c|c|c|c|}
\hline
$D$ & $F_{0,32,D,-3}(0)$ & $F_{0,32,D,-3}\left(\frac{1}{3}\right)$ & $|D|$ congruent/non-congruent \\ &&&(assuming BSD)\\
\hline
\hline
-11&0 & 1&non-congruent\\
\hline
-19&0 & 1 &non-congruent\\
\hline
-35&0 & 2 &non-congruent\\
\hline
-219& 2&2 & congruent\\
\hline
-331& 0 & 3 & non-congruent\\
\hline
-371& 4 & 4 & congruent\\
\hline
-4219& 6 & 9 & non-congruent\\
\hline
-80011 & 28&40 & non-congruent\\
\hline
-80155&24&32&non-congruent\\
\hline
-800003&138 &140 &non-congruent\\
\hline
-800011&72 &81 &non-congruent\\
\hline
-800027&86&94&non-congruent\\
\hline
-8000459&578&590&non-congruent\\
\hline
-8000467&190&200&non-congruent\\
\hline
\end{tabular}
\end{table}
\end{center}

\rm

As an amusing consequence, we next show how studying the parity of the values $F_{0,32,D,-3}$ (i.e., the parity of $\#\mathcal{S}_{32,-3D}$) may lead to results about congruent numbers. In order to illustrate this idea, we re-investigate the well-known result that every prime $p\equiv 3\pmod{8}$ is not a congruent number from this perspective. 
\begin{corollary}\label{cor:SDodd}
If $p\equiv3\pmod 8$ is a prime and $\#\mathcal{S}_{32,-3p}(1/3)$ is odd, then $L(E_p,1)\neq 0$; in particular, if this is the case, then $p$ is not a congruent number.
\end{corollary}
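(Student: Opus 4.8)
The plan is to derive this as a parity refinement of Corollary \ref{maincor}, applied with the fundamental discriminant $D=-p$; note that $p\equiv 3\pmod 8$ forces $-p\equiv 1\pmod 4$, so $-p$ is indeed a fundamental discriminant with $|D|=p\equiv 3\pmod 8$, and $3p$ is not a square once $p>3$, so Corollary \ref{maincor} applies. The first step is to observe that the genus character $\chi_{-3}$ takes only the values $\pm 1$ on $\QQ_{3p}$, so that modulo $2$ the weighting disappears:
\[
F_{0,32,D,-3}(x)=\sum_{Q\in\mathcal{S}_{32,3p}(x)}\chi_{-3}(Q)\equiv\#\mathcal{S}_{32,3p}(x)\pmod 2 .
\]
In other words, the parity of each side of the criterion in Corollary \ref{maincor} is detected by the cardinalities $\#\mathcal{S}_{32,3p}(0)$ and $\#\mathcal{S}_{32,3p}(1/3)$, the latter being exactly the quantity appearing in the hypothesis.

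The crux is to show that $F_{0,32,D,-3}(0)$ is always even. Since $Q(0)=c$, the set $\mathcal{S}_{32,3p}(0)$ consists of the forms $[a,b,c]\in\QQ_{3p}$ with $c>0>a$ and $32\mid a$, and I would introduce the involution $[a,b,c]\mapsto[a,-b,c]$ on this set. It preserves the discriminant $b^2-4ac=3p$ together with the conditions $c>0>a$ and $32\mid a$, so it maps $\mathcal{S}_{32,3p}(0)$ to itself; a fixed point would require $b=0$ and hence $3p=-4ac$, which is impossible as $3p$ is odd. The involution is therefore fixed-point-free, $\#\mathcal{S}_{32,3p}(0)$ is even, and consequently $F_{0,32,D,-3}(0)\equiv 0\pmod 2$.

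Assembling the pieces, the hypothesis that $\#\mathcal{S}_{32,3p}(1/3)$ is odd forces $F_{0,32,D,-3}(1/3)$ to be odd, while $F_{0,32,D,-3}(0)$ is even; the two values therefore cannot coincide, and Corollary \ref{maincor} gives $L(E_p,1)\neq 0$. For the statement about congruence I would invoke the unconditional implication built into Corollary \ref{maincor}, namely that if $p=|D|$ were congruent then $F_{0,32,D,-3}(0)=F_{0,32,D,-3}(1/3)$ (this direction needs no hypotheses, resting on the Coates--Wiles theorem for the CM curve $E$); the inequality just established rules this out, so $p$ is not congruent.

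I expect the only genuine content beyond quoting Corollary \ref{maincor} to be the evenness of $F_{0,32,D,-3}(0)$: one must notice that the genus character is invisible modulo $2$ and then produce the fixed-point-free involution $b\mapsto -b$, whose lack of fixed points hinges precisely on $3p$ being odd (equivalently, on working at the cusp $x=0$, where $Q(0)=c$ makes the defining sign conditions symmetric in $b$). Everything else is a formal consequence of the main theorem.
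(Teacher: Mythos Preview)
Your proposal is correct and follows essentially the same route as the paper: reduce via Corollary~\ref{maincor} (the paper cites Theorem~\ref{mainthm} directly, which amounts to the same thing) to showing $F_{0,32,-p,-3}(0)\not\equiv F_{0,32,-p,-3}(1/3)\pmod 2$, observe that $\chi_{-3}$ is nowhere zero on $\QQ_{3p}$ so that the weighted sums reduce to cardinalities modulo~$2$, and then kill the $x=0$ side with the fixed-point-free involution $b\mapsto -b$. The only point you leave implicit that the paper spells out is \emph{why} $\chi_{-3}(Q)\neq 0$ for every $Q\in\QQ_{3p}$: since $p\neq 3$ one has $(a,b,c,3)=1$ (else $9\mid 3p$), so the genus character cannot vanish by its definition~\eqref{eqn:genusdef}.
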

\begin{remarks}
\noindent

\noindent
\begin{enumerate}[leftmargin=*,label={\rm(\arabic*)}]
\item
One may explicitly compute 
\[
\mathcal{S}_{32,-3D}\!\left(\frac{1}{3}\right)=\{[a,b,c]: b^2-4ac=3D,\ 32|a,\ a+3b+9c>0>a\}.
\]
\item
Computer calculations indicate that $\#\mathcal{S}_{32,-3p}$ is always odd for $p\equiv 3\pmod{8}$ and Volker Genz has recently announced a proof that $\#\mathcal{S}_{32,-3p}$ is indeed always odd, yielding a new proof of the fact that $p$ is not a congruent number. Indeed, the calculation from Table \ref{tab:maincor} is repeated under the restriction that the discriminants are negatives of primes and the results are listed in Table \ref{tab:primes} to illustrate the parity condition hinted at in Corollary \ref{cor:SDodd}.
\begin{center}
\begin{table}[th]\caption{Primes and parity: Corollary \ref{cor:SDodd}\label{tab:primes}}
\begin{tabular}{|c|c|c|}
\hline
$p$ & $F_{0,32,-p,-3}(0)$ & $F_{0,32,-p,-3}\left(\frac{1}{3}\right)$ \\
\hline
\hline
11&0 & 1\\
\hline
19&0 & 1\\
\hline
331& 0 & 3\\
\hline
571& 4 & 1\\
\hline
5227&10&5\\
\hline
5939&18&17\\
\hline
75011&70& 83\\
\hline
75403&24&21\\
\hline
200171&102&117\\
\hline
200443&36&37\\
\hline
1300027&34&93\\
\hline
5500003&120&137\\
\hline
40500011& 1254& 1331\\
\hline
40500059& 1186& 1189\\
\hline
\end{tabular}
\end{table}
\end{center}

\end{enumerate}
\end{remarks}

We conclude by considering another example. Let $E_{27,D}$ denote the $D$th quadratic twist $E_{27,D}$ of the Fermat cubic curve $x^3+y^3=1$ of conductor $N=27$ (or its equivalent Weierstrass form $y^2=x^3-432$). Taking $N=27$ and $D_0=-4$, Theorem \ref{mainthm} yields an algorithm to determine whether or not $E_{27,D}(\Q)$ (i.e, whether the number of rational solutions to the Diophantine equation $x^3+|D|y^2=432$) is finite or infinite.
\begin{corollary}\label{cor:sumtwocubes}
For a fundamental discriminant $D<0$ with $|D|\equiv 1\pmod{3}$ and $4|D|$ not a square, we have $F_{0,27,D,-4}(0)=F_{0,27,D,-4}(1/2)$ if and only if $L(E_{27,D},1)=0$.  In particular, if $|D|\equiv 1\pmod{3}$ and $E_{27,D}$ has infinitely many rational points, then $F_{0,27,D,-4}(0)=F_{0,27,D,-4}(1/2)$ and the converse is true assuming BSD.
\rm

\end{corollary}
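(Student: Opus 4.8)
The plan is to derive the corollary as the $N = 27$, $D_0 = -4$ specialization of Theorem \ref{mainthm}, and then to translate the central-value criterion it furnishes into a statement about rational points via the known relations between $L$-values and Mordell--Weil ranks.

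First I would verify that $27$ is a dimension one level: one has $\dim S_2(\Gamma_0(27)) = 1$, so the weight $2$ cusp forms are spanned by a single normalized newform, whose associated elliptic curve is the conductor-$27$ curve $y^2 = x^3 - 432$ (equivalently the Fermat cubic $x^3 + y^3 = 1$), which carries complex multiplication by $\Q(\sqrt{-3})$. I would then read off the row for $N = 27$ in Table \ref{tab:discs}, which prescribes the auxiliary discriminant $D_0 = -4$ together with the pair $(x_{27,1}, x_{27,2}) = (0, 1/2)$. The remaining bookkeeping is to check that the hypotheses of Theorem \ref{mainthm} unwind into the explicit hypotheses of the corollary: namely that, for $N = 27$ and $D_0 = -4$, a negative fundamental discriminant $D$ is good in the sense of Section \ref{sec:gooddiscs} exactly when $|D| \equiv 1 \pmod 3$, and that the non-square condition on $|D D_0| = 4|D|$ is precisely the stated requirement that $4|D|$ not be a square. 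With these identifications in place, Theorem \ref{mainthm} immediately gives $L(E_{27,D}, 1) = 0$ if and only if $F_{0,27,D,-4}(0) = F_{0,27,D,-4}(1/2)$, which is the first assertion.

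For the ``in particular'' clause I would move between analytic and algebraic ranks. Since $E_{27,D}$ is a quadratic twist of a CM curve it again has complex multiplication, so by the theorem of Coates--Wiles (or, in greater generality, of Gross--Zagier and Kolyvagin) the nonvanishing $L(E_{27,D}, 1) \neq 0$ forces $E_{27,D}(\Q)$ to be finite. Contrapositively, if $E_{27,D}$ has infinitely many rational points then $L(E_{27,D}, 1) = 0$, and the equivalence just proved yields $F_{0,27,D,-4}(0) = F_{0,27,D,-4}(1/2)$; this implication is unconditional. For the converse I would invoke BSD: the equality of the two finite sums gives $L(E_{27,D}, 1) = 0$, and the rank part of the Birch and Swinnerton-Dyer conjecture predicts $\operatorname{rank} E_{27,D}(\Q) = \operatorname{ord}_{s=1} L(E_{27,D}, s) > 0$, whence $E_{27,D}(\Q)$ is infinite.

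Since the analytic heavy lifting is done by Theorem \ref{mainthm}, I expect the only real obstacle to be verificational rather than conceptual. The delicate point is the claim that the good-discriminant condition for the non-squarefree level $N = 27$ collapses to the single congruence $|D| \equiv 1 \pmod 3$; this requires tracking the local conditions at $3$ built into the definition of ``good'' (reflecting the CM by $\Q(\sqrt{-3})$ and the level $3^3$) together with the normalization of the genus character $\chi_{-4}$, and confirming that the pair $(0, 1/2)$ is the correct $\Gamma_0(27)$-related pair of base points singled out in the proof of Theorem \ref{mainthm}.
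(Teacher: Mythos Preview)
Your proposal is correct and follows essentially the same route as the paper: invoke Theorem \ref{mainthm} for $N=27$, $D_0=-4$, $(x_{27,1},x_{27,2})=(0,1/2)$ to get the first equivalence, then Coates--Wiles for the unconditional implication and BSD for the converse. Your write-up is in fact more explicit than the paper's two-sentence proof, which leaves the reduction ``good discriminant $\Leftrightarrow |D|\equiv 1\pmod 3$'' and the non-square bookkeeping entirely to the reader; your sketch of that reduction (via condition (4) with $p=3$, $r=3$, giving $\left(\frac{-3}{|D|}\right)=1$) is correct and makes condition (1) redundant.
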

%\begin{remark}
%In light of Corollary \ref{cor:SDodd} and the connection with the sums of cubes in Corollary \ref{cor:sumtwocubes}, one is left to wonder whether there is a similar parity condition that might lead to the conclusion that certain primes are not the sum of two cubes. Although the case $p\equiv -1\pmod{9}$ is not contained in Corollary \ref{cor:sumtwocubes}, we note that it was conjectured by Sylvester that such $p$ are the sum of two cubes. Even a modified version of the construction used in this paper would not be beneficial in that case, however, as one requires the BSD conjecture to obtain the claim in the reverse direction.
%\end{remark}
Table \ref{tab:sum2cubes}  lists the result of the calculation analogous to Table \ref{tab:maincor}.
\begin{center}
\begin{table}[th]\caption{Data for Corollary \ref{cor:sumtwocubes}\label{tab:sum2cubes}}
\begin{tabular}{|c|c|c|c|}
\hline
$D$ & $F_{0,27,D,-4}(0)$ & $F_{0,27,D,-4}\left(\frac{1}{2}\right)$ & $E_{27,D}(\Q)$ finite/infinite\\ &&&(assuming BSD)\\
\hline
\hline
-7&0 & 2&finite\\
\hline
-31&2 & 2 &infinite\\
\hline
-115& 0&4&finite\\
\hline
-283&2 &2 &infinite\\
\hline
-3019&4&6&finite\\
\hline
-3079&24&34&finite\\
\hline
-3115&8&8&infinite\\
\hline
-30091&44&26&finite\\
\hline
-30139&20&14&finite\\
\hline
-600004&158&196&finite\\
\hline
-600007&132&132&infinite\\
\hline
-600019&130&172&finite\\
\hline
-600043&70 &58 &finite\\
\hline
-3000004&368&336&finite\\
\hline
-3000103&432&414&finite\\
\hline
-3000115&224&224&infinite\\
\hline
\end{tabular}
\end{table}
\end{center}
The paper is organized as follows. In Section \ref{sec:prelim}, we establish the basic definitions needed for the paper, including genus character, good discriminants, locally harmonic Maass forms, and Poincar\'e series, review the work of Kohnen, and give a few computational examples related to the dimension one levels. In Section \ref{sec:theta-general}, we describe a generalization of Kohnen's work using theta lifts, obtaining certain locally harmonic Maass forms as theta lifts; these locally harmonic Maass forms converge to the sums $F_{0,N,D,D_0}$ as one approaches a cusp. In Section \ref{sec:mainproofs}, we give the proofs of Theorem \ref{mainthm} and Corollaries \ref{maincor}, \ref{cor:SDodd}, and \ref{cor:sumtwocubes}. 

\section*{Acknowledgements}
\noindent 
The authors thank Kathrin Bringmann for large contributions to this paper which she generously shared with us. The authors are also grateful to Ken Ono for enlightening conversations concerning the paper, and to Volker Genz for sharing his proof of the fact that $\#\mathcal{S}_{32,-3p}$ is odd. The authors thank Hongbo Yin for pointing out an error in an earlier version of this paper. The authors also thank the anonymous referee for useful comments that helped with the exposition of the paper. 

\section{Preliminaries}\label{sec:prelim}

\subsection{Definitions}\label{Defns}
Let $D_0$ and $D$ be fixed fundamental discriminants such that $DD_0>0$ is a discriminant. We begin by introducing the genus character $\chi_{D_0}$ on quadratic forms $Q=[a,b,c]\in \QQ_{D_0D}$, which is defined as
\begin{equation}\label{eqn:genusdef}
\chi_{D_0}\left(Q\right):=\begin{cases} 
\left(\frac{D_0}{r}\right) & \text{if }\left(a,b,c, D_0\right)=1\text{ and $Q$ represents $r$ with $\left(r,D_0\right)=1$,}\\ 0 & \text{if }\left(a,b,c,D_0\right)>1.\end{cases}
\end{equation}
The genus character is independent of the choice of $r$ and only depends on the $\SL_2(\Z)$-equivalence class of $Q$ (actually, it only depends on the genus of $Q$ and is indeed a character on the abelian group formed by the elements of the class group with multiplication given by Gauss's composition law after factoring out by squares). We then define for $x\in\Q$ the sum
\begin{equation}\label{eqn:F1}
F_{0,N,D,D_0}(x)=\sum_{\substack{Q=[a,b,c]\in \QQ_{D_0D}\\ Q(x)>0>a\\ N\mid a}}\chi_{D_0}(Q).
\end{equation}

\begin{remark}
\noindent
We note that for rational $x$, this sum is actually finite. In fact, if $x=p/q$ then by \cite{ZagierQuadratic}, we have 
\[
D_0Dq^2=\left|bq+2ap\right|^2+4\left|a\right|\left|ap^2+bpq+cq^2\right|
\]
(note that this corrects a typo in \cite{ZagierQuadratic}). This bounds each of $a,b$, and $c$. 
\end{remark}

\subsection{Explicit computations and good discriminants}\label{sec:gooddiscs}
We call an odd discriminant $D$ \begin{it}good\end{it} for level $N$ if the following hold:
\noindent

\noindent
\begin{enumerate}[leftmargin=*,label={\rm(\arabic*)}]
\item If $N$ is not a perfect square, then $\left(\frac{-4N}{|D|}\right)=1$.
\item If $2\mid N$, then $|D|\equiv 3\pmod{8}$.
\item If $p\equiv -1\pmod{8}$ is a prime with $p| N$, then $\left(\frac{-p}{|D|}\right)=-1$.
\item If $p\equiv 3\pmod{8}$ is a prime with $p^r\| N$ for some $r\geq 1$, then $\left(\frac{-p}{|D|}\right)=(-1)^{r+1}$.
\end{enumerate}
We next compile a list of data for each of the dimension one levels; specifically, in Table \ref{tab:discs} we choose one fundamental discriminant $D_0$ and compute the good fundamental discriminants $D$. We then give a brief list of the first few discriminants for which the sums in \eqref{eqn:F1} are not invariant under $\Gamma_0(N)$, underlining the good fundamental discriminants for which we conclude that the central twist $L(E_D,1)$ does not vanish by Theorem \ref{mainthm}. Finally we give a pair of choices of $x_{N,1}$ and $x_{N,2}$ used to describe Theorem \ref{mainthm}.
\begin{center}
\begin{table}[th]\caption{Table of good fundamental discriminants\label{tab:discs}}
\begin{tabular}{|c|c|c|c|c|}
\hline
Level $N$ &$D_0$ & Good Discriminants $D$ & Brief list of $m=|D|$ with \eqref{eqn:F1} not invariant &  $(x_{N,1},x_{N,2})$\\
\hline
\hline
11& $-3$ & $\left(\frac{-11}{|D|}\right)= 1$& $4,11,12,\underline{15},16,20,\underline{23},27,\underline{31},44,48,\dots$ & $0,\frac{1}{3}$\\
\hline
14& $-3$ & $\left(\frac{-4\cdot 14}{|D|}\right)=1$& $\underline{19},20,24,27,35,40,52,56,\underline{59},68,\dots$ & $0,\frac{1}{2}$\\
\hline
15 &$-4$ &$\substack{\left(\frac{5}{|D|}\right)=1\\ \text{and }\left(\frac{-3}{|D|}\right)\neq -1} $ &$15,16,\underline{19},24,\underline{31},\underline{39},40,\underline{51},55,60,\dots$ &$0,\frac{1}{3}$\\
\hline
17&$-7$ & $\left(\frac{-4\cdot 17}{|D|}\right)=1$  &$\underline{3},\underline{11},20,\underline{23},24,28,\underline{31},40,48,51,63,\dots$ &$0,\frac{1}{2}$\\
\hline
19 &$-4$& $\left(\frac{-19}{|D|}\right)= 1$& $\underline{7},\underline{11},19,\underline{20},\underline{24},28,\underline{35},36,\underline{39},43,44,\dots$  &$0,\frac{1}{2}$\\
\hline
20& $-3$ & $\substack{|D|\equiv 3\pmod{8}\\ \text{ and }\left(\frac{-20}{|D|}\right)= 1}$& $27,35,\underline{43},\underline{67},\underline{83},\underline{107},115,\underline{123}, \dots$ & $0,\frac{1}{2}$\\
\hline
21& $-19$& $\substack{\left(\frac{-7}{|D|}\right)=-1\\ \text{ and }\left(\frac{-3}{|D|}\right)=1}$ &$\underline{3},7,\underline{24},27,28,\underline{31},\underline{40},48,\underline{52},63,\dots$ &$0,\frac{1}{2}$\\
\hline
24 &$-11$ &$\substack{|D|\equiv 3\pmod{8}\\ \text{and }\left(\frac{-24}{|D|}\right)=1}$ &$\underline{3},27, \underline{35},\underline{51},\underline{59},75,\underline{83},99,\underline{107},\underline{123},\dots$ & $\frac{1}{2},\frac{1}{3}$\\
\hline
27 & $-4$ &$\left(\frac{-3}{|D|}\right)=1$ &$\underline{7},\underline{19},28,36,\underline{40},\underline{43},\underline{52},\underline{55},64,\underline{67},76,\dots$ & $0,\frac{1}{2}$\\
\hline
32 & $-3$& $|D|\equiv 3\pmod{8}$ & $\underline{11},12,\underline{19},\underline{35},\underline{43},48,\underline{51},\underline{59},\underline{67},75,\underline{83}\dots$ &$0,\frac{1}{3}$\\
\hline
36&$-11$ & $\substack{|D|\equiv 3\pmod{8}\\ \text{and }\left(\frac{-3}{|D|}\right)=-1}$ &$27,\underline{35},\underline{59},\underline{83},99,\underline{107},\underline{131},\underline{155},171,\dots$ &$0,\frac{1}{2}$\\
\hline
49&$-3$ & $\left(\frac{-7}{|D|}\right)=-1$ &$\underline{19},\underline{20},27,\underline{31},\underline{40},\underline{47},48,\underline{55},\underline{59},\underline{68},75,\dots$ &$0,\frac{1}{7}$\\
\hline
\end{tabular}
\end{table}
\end{center}

\subsection{The Work of Kohnen}
In this section we review the main results of \cite{KohnenCoeff} which we need. We first recall a special set of cusp forms of weight $2k$. For integers $k\geq2$, $N\geq1$, and fundamental discriminants $D,D_0$ with $DD_0>0$, let 
\[
f_{k,N,D,D_0}(\tau):=\sum_{\buildrel{[a,b,c]\in\mathcal{Q}_{DD_0}}\over{N|a}}\chi_{D_0}(a,b,c)\left(a\tau^2+b\tau+c\right)^{-k}.
\]
This series converges absolutely on compact sets and defines a cusp form of weight $2k$ on $\Gamma_0(N)$. We require the corresponding cusp form in the case of $k=1$, where the above series is not absolutely convergent. In this case, we define by the ``Hecke trick'' 
\[
f_{1,N,D,D_0}(\tau;s):=\sum_{\buildrel{[a,b,c]\in\mathcal{Q}_{DD_0}}\over{N|a}}\chi_{D_0}(a,b,c)\left(a\tau^2+b\tau+c\right)^{-k}\left|a\tau^2+b\tau+c\right|^{-s}.
\]
This has an analytic continuation to $s=0$ as Kohnen showed in the proof of Proposition 2 of \cite{KohnenCoeff}, and we set $f_{1,N,D,D_0}(\tau):=\left[f_{1,N,D,D_0}(\tau;s)\right]_{s=0}\in M_2(N)$, where $[g]_{s=s_0}$ denotes the analytic continuation of $g$ to $s=s_0$ and $M_{2k}(N)$ (resp. $S_{2k}(N)$) denotes the space of weight $2k$ holomorphic modular forms (resp. cusp forms) on $\Gamma_0(N)$.  We note that in general, this is \emph{not} a cusp form. We next describe how one uses these functions to obtain information about central critical $L$-values. Proposition 7 of \cite{KohnenCoeff} connects the inner product of cusp forms with this family of cusp forms and cycle integrals. To describe this, first define
\[
r_{k,N}\left(g;D_0,\left|D\right|\right):=\sum_{\substack{Q=[a,b,c]\in \Gamma_0(N)\backslash\QQ_{DD_0}\\ N\mid a }} \chi_{D_0}(Q)\int_{C_Q}f(z) d_{Q,k}z.
\]
where $C_Q$ is the image in $\Gamma_0(N)\backslash\mathbb{H}$ of the semicircle $a|\tau|^2+b\operatorname{Re}(\tau)+c=0$ (oriented from left to right if $a>0$, from right to left if $a<0$, and from $-\frac{c}{b}$ to $i\infty$ if $a=0$) and $d_{Q,k}z:=\left(az^2+bz+c\right)^{k-1}dz.$  Although Kohnen assumed that $N$ was squarefree and odd in \cite{KohnenCoeff}, this condition was not used in Proposition 7 of \cite{KohnenCoeff}, which we state here for general $N$.
\begin{theorem}[Kohnen \cite{KohnenCoeff} Proposition 7]
Let $f\in S_{2k}(N)$. Then we have that
\[
\langle f,f_{k,N,D,D_0}\rangle=[\Gamma_0(N)\colon\operatorname{SL}_2(\Z)]^{-1}\pi\binom{2k-2}{k-1}2^{-2k+2}\left(|DD_0|\right)^{1/2-k}r_{k,N,D,D_0}(f).
\]
\end{theorem}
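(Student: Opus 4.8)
The plan is a Petersson (Rankin--Selberg) unfolding of the inner product against the series defining $f_{k,N,D,D_0}$, which for $k\ge 2$ converges absolutely and locally uniformly, so that every rearrangement below is justified by Fubini. First I would write the inner product, with the normalization of the Petersson product supplying the index factor, as
\[
\langle f,f_{k,N,D,D_0}\rangle=\frac{1}{[\SL_2(\Z):\Gamma_0(N)]}\int_{\Gamma_0(N)\backslash\H}f(\tau)\,\overline{f_{k,N,D,D_0}(\tau)}\,y^{2k}\,\frac{dx\,dy}{y^{2}},
\]
and use that $\chi_{D_0}$ is real-valued to get $\overline{f_{k,N,D,D_0}(\tau)}=\sum_{Q}\chi_{D_0}(Q)\,\overline{Q(\tau,1)}^{-k}$, the sum running over $Q=[a,b,c]\in\QQ_{DD_0}$ with $N\mid a$.

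Since both the set $\{Q:N\mid a\}$ and the weight $\chi_{D_0}$ are $\Gamma_0(N)$-invariant, I would group the sum into $\Gamma_0(N)$-orbits, writing each as $\Gamma_Q\backslash\Gamma_0(N)$ for the infinite cyclic stabilizer $\Gamma_Q$ of a representative $Q$. Writing $j(M,\tau)=\gamma\tau+\delta$ for the lower row $(\gamma,\delta)$ of $M$, the standard relation $(Q|M)(\tau,1)=j(M,\tau)^{2}Q(M\tau,1)$ together with the weight-$2k$ modularity $f(\tau)=j(M,\tau)^{-2k}f(M\tau)$ and $y=\im(M\tau)\abs{j(M,\tau)}^{2}$ makes all automorphy factors cancel, and the integral unfolds to
\[
\langle f,f_{k,N,D,D_0}\rangle=\frac{1}{[\SL_2(\Z):\Gamma_0(N)]}\sum_{[Q]}\chi_{D_0}(Q)\int_{\Gamma_Q\backslash\H}f(\tau)\,\overline{Q(\tau,1)}^{-k}\,y^{2k}\,\frac{dx\,dy}{y^{2}}.
\]

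The heart of the argument, and the main obstacle, is to evaluate each unfolded integral as the cycle integral $\int_{C_Q}f(z)\,Q(z,1)^{k-1}\,dz$ with the exact constant. For a form $Q$ with distinct real roots (which is the generic case; the split forms with $a=0$ occur only when $\abs{DD_0}$ is a square and must be handled separately) I would pick $\sigma\in\SL_2(\R)$ carrying the imaginary axis to the geodesic $C_Q$, so that $Q(\sigma z,1)=B\,j(\sigma,z)^{-2}z$ with $B^{2}=DD_0$. Passing to $z$-coordinates and then to $w=\log z=\rho+i\theta$ conjugates $\Gamma_Q$ to the translation $w\mapsto w+\log t_0$, where $t_0>1$ is the fundamental multiplier, and reduces both the area integral and the cycle integral to integrals of the \emph{single holomorphic} function $\tilde h(w):=(f|_{2k}\sigma)(e^{w})\,e^{kw}$ over the fundamental rectangle $\rho\in[0,\log t_0]$, $\theta\in(0,\pi)$. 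The crucial observation is that $\tilde h$ is genuinely $\log t_0$-periodic: conjugating the generator of $\Gamma_Q$ to the scaling $z\mapsto t_0z$ turns the $2k$-slash into multiplication by $t_0^{-k}$, which the factor $e^{kw}$ cancels, while $f|g_Q=f$. Periodicity lets me shift the horizontal contour by Cauchy's theorem with the two vertical sides cancelling, so that $\int_{0}^{\log t_0}\tilde h(\rho+i\theta)\,d\rho$ is independent of $\theta$; the outer $\theta$-integral then collapses onto the slice $\theta=\pi/2$, which is exactly the closed geodesic $C_Q/\Gamma_Q$.

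Collecting constants, the radial integration is immediate and the angular integration produces $\int_{0}^{\pi}\sin^{2k-2}\theta\,d\theta=\pi\binom{2k-2}{k-1}2^{-2k+2}$, while the powers $B^{-k}$ (from the Petersson integrand) and $B^{k-1}$ (from the cycle integrand) combine into $B^{1-2k}=(DD_0)^{1/2-k}=\abs{DD_0}^{1/2-k}$. This gives, for each $Q$,
\[
\int_{\Gamma_Q\backslash\H}f(\tau)\,\overline{Q(\tau,1)}^{-k}\,y^{2k}\,\frac{dx\,dy}{y^{2}}=\pi\binom{2k-2}{k-1}2^{-2k+2}\,\abs{DD_0}^{1/2-k}\int_{C_Q}f(z)\,Q(z,1)^{k-1}\,dz,
\]
and summing over $[Q]$ against $\chi_{D_0}$ yields the stated identity. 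The points demanding the most care are: the bookkeeping of signs and orientations of $C_Q$, which must be reconciled with the stated left-to-right/right-to-left convention and the sign of $B$; the justification of the contour shift and of convergence near the endpoints of the geodesic, where the growth $\abs{f(\tau)}\ll\im(\tau)^{-k}$ of a cusp form against the vanishing factor $\sin^{2k-2}\theta$ leaves an integrable $\sin^{k-2}\theta$ for $k\ge2$; and the verification that the degenerate split forms are absent in the generic situation.
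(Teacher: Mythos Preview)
Your unfolding argument is correct and is precisely the classical proof: the paper does not supply its own argument for this statement but simply cites Kohnen's Proposition~7 in \cite{KohnenCoeff}, remarking that his proof carries over verbatim to general $N$. The caveats you flag (the sign of $B$ versus the stated orientation of $C_Q$, the integrability via $|\tilde h|\sin^{2k-2}\theta\ll\sin^{k-2}\theta$ for $k\ge2$, and the degenerate forms with $a=0$ when $|DD_0|$ is a square) are exactly the points Kohnen handles there.
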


In our situation, we require forms of more general level than those considered in Kohnen. For example, in the congruent number case when $k=1$ and $N=32$, there are technical difficulties both because of convergence for the $k=1$ case and some results of Kohnen only holding for $N$ squarefree and odd.  However, we temporarily ignore these difficulties and motivate the definition of $f_{k,N,D,D_0}$ with the following theorem of Kohnen for $N$ squarefree and odd.
\begin{theorem}[Kohnen \cite{KohnenCoeff} Corollary 3]\label{thm:Kohnen}
Let $f\in S_{2k}(N)$ and let  $D$ and $D_0$ be fundamental discriminants with $(-1)^kD,(-1)^kD_0>0$, and $\left(\frac{D}{\ell}\right)=\left(\frac{D_0}{\ell}\right)=w_{\ell}$ for all primes $\ell|N$, where $w_{\ell}$ is the eigenvalue of $f$ under the Atkin-Lehner involution $W_{\ell}$. Then
\[\left(DD_0\right)^{k-1/2}L(f\otimes\chi_D,k)L(f\otimes\chi_{D_0},k)=\frac{(2\pi)^{2k}}{(k-1)!^2}2^{-2\nu(N)}|r_{k,N,D,D_0}(f)|^2.\]
\end{theorem}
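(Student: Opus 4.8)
The plan is to combine the inner-product formula of the preceding theorem (Kohnen's Proposition 7, already available) with the Shimura--Shintani correspondence and Waldspurger's theorem in the form of the Kohnen--Zagier formula \cite{KohnenZagier}; the genus character is precisely the device that converts a single cycle-integral period into the \emph{product} of two twisted central values. Throughout I take $f$ to be a normalized Hecke eigenform and write $g_f$ for its half-integral weight partner.

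First I would set up the correspondence. Using Kohnen's plus-space theory, associate to $f\in S_{2k}(N)$ a Hecke eigenform $g_f\in S^+_{k+1/2}(4N)$, and realize the family $\{f_{k,N,D,D_0}\}$ (for fixed $D_0$, as $D$ varies) as the Fourier coefficients in the second variable of a Shintani-type theta kernel $\Theta_{D_0}(\tau,z)$, modular of weight $k+\tfrac12$ in $\tau$ and weight $2k$ in $z$. Pairing this kernel against $f$ in the $z$-variable reproduces $g_f$ up to a constant, so that by Proposition 7 the $|D|$-th Fourier coefficient of the pairing is a fixed multiple of $r_{k,N,D,D_0}(f)$.

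The crucial structural step is the genus-character factorization. Because $\chi_{D_0}$ is a character on the form class group of discriminant $DD_0$ that decouples the two discriminants, the twisted cycle integral factors as
\[
r_{k,N,D,D_0}(f)=\lambda_{N,k}\,c(|D|)\,c(|D_0|),
\]
where $c(m)$ is the $m$-th Fourier coefficient of $g_f$ and $\lambda_{N,k}$ is an explicit constant recording the Shimura-correspondence normalization, in particular the Petersson ratio $\langle f,f\rangle/\langle g_f,g_f\rangle$ and a power of $2$ counting the primes dividing $N$. Conceptually this reflects the fact that, over the real quadratic field $\Q(\sqrt{DD_0})$, the period attached to the associated geodesic cycle computes the value at $k$ of the $L$-function of $f$ twisted by the ideal class (genus) character, whose induction to $\Q$ splits as $\chi_D\oplus\chi_{D_0}$ and hence factors the $L$-value as $L(f\otimes\chi_D,k)\,L(f\otimes\chi_{D_0},k)$. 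The sign conditions $(-1)^kD,(-1)^kD_0>0$ and the congruence conditions $\left(\tfrac{D}{\ell}\right)=\left(\tfrac{D_0}{\ell}\right)=w_\ell$ for $\ell\mid N$ are exactly what place $|D|$ and $|D_0|$ in the support of $g_f$ in the plus space, guaranteeing that $c(|D|)$ and $c(|D_0|)$ are the relevant coefficients.

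Finally I would invoke the Kohnen--Zagier form of Waldspurger's theorem,
\[
|c(m)|^2=\frac{(k-1)!}{\pi^k}\,\frac{\langle g_f,g_f\rangle}{\langle f,f\rangle}\,m^{k-1/2}\,L(f\otimes\chi_m,k),
\]
valid under the stated hypotheses, apply it to $m=|D|$ and $m=|D_0|$, and substitute into $|r_{k,N,D,D_0}(f)|^2=\lambda_{N,k}^2\,|c(|D|)|^2\,|c(|D_0|)|^2$. The Petersson-norm ratios then cancel between $\lambda_{N,k}^2$ and the two Waldspurger factors, and collecting the remaining constants yields the claimed identity. The main obstacle is precisely this constant bookkeeping together with the factorization: one must pin down $\lambda_{N,k}$ exactly, verify the cancellation of $\langle f,f\rangle$ and $\langle g_f,g_f\rangle$, and confirm that the local factors at primes $\ell\mid N$ combine to the clean power $2^{-2\nu(N)}$ — this is where the squarefree-odd hypothesis on $N$ and the Atkin--Lehner conditions are genuinely used.
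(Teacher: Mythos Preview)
The paper does not give its own proof of this statement: it is quoted verbatim as Corollary~3 of Kohnen~\cite{KohnenCoeff} and used only as motivation for the definition of $f_{k,N,D,D_0}$, with the caveat that it applies for $N$ squarefree and odd. So there is no in-paper argument to compare against.

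That said, your outline is essentially the route Kohnen takes, and it is also the skeleton visible in this paper's proof of the neighbouring Lemma~\ref{lem:Lval}: one packages the functions $f_{k,N,(-1)^km,D_0}$ into the generating series $\Omega_{N,k}(z,\tau;D_0)$ (equation~\eqref{eqn:Omegadef} here, equation~(3) in~\cite{KohnenCoeff}), observes via Proposition~7 that pairing with $f$ in the $z$-variable produces a weight $k+\tfrac12$ eigenform $G_f$ whose $|D|$-th coefficient is proportional to $r_{k,N,D,D_0}(f)$, and then invokes the Kohnen--Zagier formula to convert $|c(|D|)|^2$ into $|D|^{k-1/2}L(f\otimes\chi_D,k)$. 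Your identification of the genus character as the device that factors the cycle integral into $c(|D|)c(|D_0|)$ is the right structural point, and your caution that the constants (Petersson ratios, the $2^{-2\nu(N)}$ from local Atkin--Lehner data) require the squarefree-odd hypothesis is exactly where the work lies. In short: the approach is correct and matches the original; the paper simply imports the result rather than reproving it.
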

Thus, if we pick a $D_0$ with $L(f\otimes\chi_{D_0},1)\neq 0$, then (again with $N$ squarefree and odd)
\[
\langle f,f_{k,N,D,D_0}\rangle\doteq L(f\otimes\chi_D,1)L(f\otimes\chi_{D_0},1)
\]
vanishes if and only if $L(f\otimes\chi_{D},1)=0.$  Here $\doteq$ means that the identity is true up to a non-zero constant.

We now discuss the basic argument for the proof of a condition analogous to that given in Theorem \ref{mainthm} for $N$ squarefree and odd whenever the space of cusp forms is one-dimensional (this condition can be relaxed by using the Hecke operators).  One would construct a locally harmonic Maass form with special properties whenever $f_{k,N,D,D_0}=0$.  Namely, the locally harmonic Maass form is a local polynomial (of degree at most $2k-2$) if and only if $f_{k,N,D,D_0}=0$.  Taking the limit of the resulting local polynomials towards cusps yields conditions resembling Theorem \ref{mainthm}.  

We now return to the complications arising from the fact that $N$ is not necessarily squarefree and odd, but we assume that $S_{2k}(N)$ is one-dimensional.  In this case, we use Waldspurger's Theorem \cite{Waldspurger} directly to obtain the connection to central critical $L$-values.  We say that $m>0$ is \begin{it}admissible for $D_0$\end{it} if $(-1)^km$ is a fundamental discriminant and $\frac{(-1)^km}{D_0}\in \Q_p^{\times^2}$ for every prime $p\mid N$.  In particular, the good fundamental discriminants in Table \ref{tab:discs} satisfy these conditions for $k=1$ and the given level $N$, yielding admissible $m>0$ for each of the listed fundamental discriminants $D_0$ in the cases considered.
\begin{lemma}\label{lem:Lval}
Let a fundamental discriminant $D_0$ be given and suppose that $f\in S_{2k}(N)$ generates  $S_{2k}(N)$. Suppose that there exists an admissible $m_0$ for $D_0$ such that that $L\left(f\otimes \chi_{(-1)^km_0},k\right)\neq 0$ and the cuspidal part of $f_{k,N,(-1)^km_0,D_0}$ is non-zero.  Then for every $m>0$ admissible for $D_0$, we have 
$$
\left<f,f_{k,N,(-1)^km,D_0}\right>=0
$$
if and only if 
$$
L\left(f\otimes \chi_{(-1)^km},k\right)=0.
$$
\end{lemma}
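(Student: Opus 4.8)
The plan is to use Waldspurger's theorem to express each central $L$-value $L(f\otimes\chi_{(-1)^km},k)$ in terms of a squared Fourier coefficient of a fixed half-integral weight form, and then to identify these coefficients with the cycle integrals $r_{k,N,(-1)^km,D_0}(f)$ that appear via Kohnen's Proposition 7 in the inner product $\langle f, f_{k,N,(-1)^km,D_0}\rangle$. Since $S_{2k}(N)$ is one-dimensional and generated by $f$, the inner product $\langle f, f_{k,N,(-1)^km,D_0}\rangle$ equals (up to the nonzero factor $\langle f,f\rangle$) the coefficient of $f$ in the cuspidal projection of $f_{k,N,(-1)^km,D_0}$. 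Kohnen's Proposition 7 (stated above for general $N$) identifies the relevant pairing with $r_{k,N,(-1)^km,D_0}(f)$, so the first step is to record that
\[
\langle f, f_{k,N,(-1)^km,D_0}\rangle \doteq r_{k,N,(-1)^km,D_0}(f),
\]
the implied constant being nonzero and independent of $m$.

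The second step is to invoke Waldspurger's theorem in a form that covers our non-squarefree, possibly even levels. The key point is that for fixed $D_0$ there is a half-integral weight Hecke eigenform $g$ in Shimura correspondence with $f$ such that for all $m$ admissible for $D_0$ one has
\[
L\!\left(f\otimes\chi_{(-1)^km},k\right) \doteq \left|c_g(m)\right|^2 \cdot (\text{explicit nonzero factor}),
\]
where $c_g(m)$ is the $m$th Fourier coefficient of $g$, and admissibility (the local square-class condition $\tfrac{(-1)^km}{D_0}\in\Q_p^{\times 2}$ for $p\mid N$) is precisely what guarantees that the relevant local terms in Waldspurger's formula are nonzero, so that the proportionality has a genuinely nonvanishing constant. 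I would then match the cycle integral to this coefficient: the cycle integrals $r_{k,N,(-1)^km,D_0}(f)$ are, up to a fixed nonzero normalization, exactly the Fourier coefficients $c_g(m)$ (this is the content of Kohnen's correspondence between cycle integrals weighted by the genus character $\chi_{D_0}$ and half-integral weight coefficients). Combining, for all admissible $m$,
\[
\langle f, f_{k,N,(-1)^km,D_0}\rangle \doteq c_g(m), \qquad L\!\left(f\otimes\chi_{(-1)^km},k\right)\doteq \left|c_g(m)\right|^2.
\]

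The third step uses the hypothesis. We are given one admissible $m_0$ with $L(f\otimes\chi_{(-1)^km_0},k)\neq 0$ and with the cuspidal part of $f_{k,N,(-1)^km_0,D_0}$ nonzero. By the displayed proportionalities this forces the constant $c_g(m_0)\neq 0$ and hence pins down that the (single) proportionality constant relating the inner product to $c_g(m)$ is nonzero — the role of $m_0$ is exactly to certify that $g\neq 0$ and that the normalizing constant does not accidentally vanish. Once the constant is known nonzero, for every admissible $m$ we get $\langle f, f_{k,N,(-1)^km,D_0}\rangle=0 \iff c_g(m)=0 \iff |c_g(m)|^2=0 \iff L(f\otimes\chi_{(-1)^km},k)=0$, which is the claim.

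The main obstacle I anticipate is the careful bookkeeping of Waldspurger's theorem in the non-squarefree and even level setting: ensuring that the local factors at the bad primes $p\mid N$ are nonzero exactly under the admissibility hypothesis, and that the same fixed eigenform $g$ controls all admissible $m$ simultaneously with an $m$-independent constant. The squarefree odd case is Kohnen's Theorem \ref{thm:Kohnen}, but here one must cite a version of Waldspurger's theorem (e.g. \cite{Waldspurger}) general enough to handle the levels in Table \ref{tab:discs}, and verify that the local square-class condition defining admissibility is the correct nonvanishing criterion for the local Whittaker/orbital factors. The rest — the identification of cycle integrals with half-integral coefficients via Kohnen's Proposition 7, and the one-dimensionality reduction — is essentially formal given the results already stated.
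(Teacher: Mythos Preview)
Your overall strategy is correct and matches the paper's: relate $\langle f,f_{k,N,(-1)^km,D_0}\rangle$ to a Fourier coefficient of a half-integral weight Hecke eigenform in the Shimura packet of $f$, and then apply Waldspurger. However, your route to this coefficient is roundabout and leaves the most delicate point unjustified for the general levels in Table~\ref{tab:discs}.

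You pass through Proposition~7 to trade the inner product for a cycle integral, and then assert that the cycle integrals $r_{k,N,(-1)^km,D_0}(f)$ are (up to a fixed constant) the Fourier coefficients $c_g(m)$ of some half-integral weight eigenform $g$. That second identification is exactly the content that is only proved by Kohnen for squarefree odd $N$; for the levels here you would have to supply it, and you do not say how. The paper sidesteps the detour entirely. Instead of going through cycle integrals and then back to half-integral coefficients, it uses Kohnen's kernel $\Omega_{N,k}(z,\tau;D_0)$ (equation~\eqref{eqn:Omegadef}) directly: taking the Petersson inner product with $f$ in $z$ produces a function $G_f(\tau)$ whose $m$th Fourier coefficient is, by construction, $\sqrt{m}\,\langle f,g_{k,N,(-1)^km,D_0}\rangle$ (here one uses that $S_{2k}(N/t)=\{0\}$ for $t>1$ in the one-dimensional situation, killing the M\"obius sum over $t\mid N$). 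Kohnen's Theorem~1 (which does hold for general $N$) gives cuspidality of $G_f$ in weight $k+\tfrac12$; an argument of Parson on the Hecke equivariance of the kernel in the two variables then forces $G_f$ to be an eigenform with the same eigenvalues as $f$. Waldspurger applied to $G_f$ and the anchor $m_0$ finishes the proof.

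So the missing idea in your sketch is precisely this: do not search for an abstract $g$ and hope the cycle integrals match its coefficients; build the half-integral form $G_f$ out of the inner products themselves via the kernel, and prove it is a Hecke eigenform by the Hecke compatibility of $\Omega_{N,k}$. Your invocation of Proposition~7 is then unnecessary (it cancels against the step you could not justify), and the admissibility condition enters only at the very end through Waldspurger, exactly as you describe.
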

\begin{remark}
Admissible $m_0$ satisfying the conditions given in Lemma \ref{lem:Lval} may be found in practice by computing a few coefficients of the form $f_{k,N,(-1)^km_0,D_0}$ and using the valence formula. For $k=1$ we need to project into the subspace of cusp forms, while for $k>1$ we only need to check one coefficient because $S_{2k}(N)$ is one-dimensional. For the choices of $D_0$ given in Table \ref{tab:discs}, $m_0$ may be obtained by choosing from the list of non-vanishing choices computed in the same row of the table. For example, for $k=1$, $N=32$, and $D_0=-3$, one can check that $m=11$ is admissible and satisfies the required conditions. Moreover, potential choices of $m_0$ may be obtained by finding associated $D$ for which $F_{0,N,D,D_0}(x)$ defined in \eqref{eqn:F1} does not satisfy modularity for one fundamental discriminant $D$.   This is a finite calculation, and one expects that this holds for any fundamental discriminant $D$ for which the central $L$-value does not vanish.  If $k>1$, then to verify the above condition one multiplies each summand of $F_{0,N,D,D_0}(x)$ by $Q(x,1)^{k-1}$ and again checks modularity.
\end{remark}
\begin{proof}
Let $g_{k,N,D,D_0}$ be the cuspidal part of $f_{k,N,D,D_0}$.  Consider the generating function of the $f_{k,N,D_0,D}$ (defined in (3) of \cite{KohnenCoeff})
\begin{multline}\label{eqn:Omegadef}
\Omega_{N,k}\left(z,\tau;D_0\right):=\frac{\left[\SL_2(\Z):\Gamma_0(N)\right]2\sqrt{D}}{\pi \binom{2k-2}{k-1}}\\
\times\sum_{\substack{m\geq 1\\ (-1)^km\equiv 0,1\pmod{4}}} \sqrt{m}\sum_{t\mid N}\mu(t)\chi_{D_0}(t)f_{k,\frac{N}{t},(-1)^km,D_0}\left(z\right)e^{2\pi i m\tau}.
\end{multline}
By Theorem 1 of \cite{KohnenCoeff} (which holds for general $N$), $\Omega_{N,k}$ is a weight $k+\frac{1}{2}$ cusp form as a function of $\tau$.  

Since the space of cusp forms is one-dimensional, $S_2\left(\frac{N}{t}\right)$ is trivial for all $t>1$, and hence $g_{k,\frac{N}{t},(-1)^km,D_0}=0$ for all $t>1$.  Using the fact that the Eisenstein series are orthogonal to cusp forms, for $f\in S_{2k}(N)$ we hence have 
\begin{multline*}
G_f(\tau):=\left<f,\Omega_{N,k}\left(\cdot, \tau;D_0\right)\right>=\frac{\left[\SL_2(\Z):\Gamma_0(N)\right]2\sqrt{D}}{\pi \binom{2k-2}{k-1}}\\
\times\sum_{\substack{m\geq 1\\ (-1)^km\equiv 0,1\pmod{4}}} \sqrt{m}\left<f,g_{k,N,(-1)^km,D_0}\right>e^{2\pi i m\tau}.
\end{multline*}
Therefore, $\sqrt{m}\left<f,g_{k,N,(-1)^km,D_0}\right>$ is the $m$th coefficient of the weight $k+\frac{1}{2}$ cusp form $G_f$.  Using an argument of Parson \cite{Parson}, one can show that the action of the integral-weight Hecke operators on $\Omega_{N,k}$ in the $z$ variable equal the action of the half-integral weight operators on $\Omega_{N,k}$ in the $\tau$ variable.  Hence if $f$ is a newform, we obtain that $G_f$ is an eigenform with the same eigenvalues as $f$.  Thus, by Waldspurger's Theorem \cite{Waldspurger}, if $m$ is admissible for $D_0$, then the ratio of the $m$th coefficient and the $m_0$th coefficient of $G_f$ is proportional to the ratio of the central $L$-values $L\left(f\otimes \chi_{(-1)^km},k\right)$ and $L\left(f\otimes \chi_{(-1)^km_0},k\right)$.  Since $S_{2k}(N)$ is one-dimensional, there exists a constant $a_m$ for which 
$$
g_{k,N,(-1)^km,D_0}=a_mf.
$$
By our choice of $m_0$, we have $a_{m_0}\neq 0$ and hence 
$$
\left<f,g_{k,N,(-1)^km_0,D_0}\right>=a_{m_0}\|f\|^2\neq 0.
$$
Since $L\left(f\otimes \chi_{(-1)^km_0},k\right)\neq 0$, we obtain that
$$
\left<f,f_{k,N,(-1)^km,D_0}\right>=\left<f,g_{k,N,(-1)^km,D_0}\right>=0
$$
if and only if 
$$
L\left(f\otimes \chi_{(-1)^km},k\right)=0.
$$
\end{proof}
\subsection{Weak Maass forms}
We begin by defining the notion of a weak Maass form, a comprehensive survey of which can be found in, e.g., \cite{OnoCDM}.  Maass forms were introduced by Maass and generalized by Bruinier and Funke in \cite{BruinierFunke} to allow growth at the cusps. Following their work, we define a weak Maass form of weight $\kappa\in \frac12 \Z$ for a congruence subgroup $\Gamma$ as follows.  We first recall the usual weight $\kappa$ hyperbolic Laplacian operator given by 
\begin{equation*}
\Delta_{\kappa}:= -v^2\left( \frac{\partial^2}{\partial u^2}+\frac{\partial^2}{\partial v^2} \right) -i\kappa v\left(  \frac{\partial}{\partial u}  +\frac{\partial}{\partial v} \right).
\end{equation*}
For half-integral $\kappa$, we require that $\Gamma$ has level divisible by $4$. For this case we also define $\varepsilon_d$ for odd $d$ by
\begin{equation*}
\varepsilon_d:= \begin{cases}1 &\textrm{ if }d\equiv 1\pmod{4},\\ i&\textrm{ if }d\equiv 3\pmod{4}.\end{cases}
\end{equation*}

Let $s\in\C$ be given and define $\lambda_{\kappa,s}:=\left(s-\frac{\kappa}{2}\right)\left(1-s-\frac{\kappa}{2}\right)$ (note the symmetries $\lambda_{2-\kappa,s}=\lambda_{\kappa,s}=\lambda_{\kappa,1-s}$). We may now define weak Maass forms as follows. 
\begin{definition}\label{MaassDefn}
A \begin{it}weak Maass form of weight $\kappa$ and eigenvalue $\lambda_{\kappa,s}$ on a congruence subgroup $\Gamma\subseteq\operatorname{SL}_2(\Z)$\end{it} (with level $4|N$ if $\kappa\in\frac12+\Z$) is any $\mathcal C^2$ function $F\colon \mathbb H\to \C$ satisfying:
\begin{enumerate}[leftmargin=*,label={\rm(\arabic*)}]
\item For all $\gamma\in \Gamma,$ 
\[
F(\gamma \tau) = \begin{cases} (c\tau+d)^{\kappa}F(\tau)& \mathrm{if } ~\kappa\in \Z,\\ \nu_{\theta}^k(\gamma) F(\tau) & \mathrm{if }~ \kappa\in \frac 12+\Z.
 \end{cases}
\]
where $\nu_{\theta}(\gamma)$ is the multiplier of the theta function $\Theta(\tau):=\sum_{n\in\Z}q^{n^2}$ with $q:=e^{2\pi i \tau}$.
\item  We have that 
\[
\Delta_{\kappa}(F)=\lambda_{\kappa,s} F.
\]
\item 
As $v\rightarrow\infty$, there exist $a_1,\ldots, a_N\in\C$ such that 
\[
F(\tau)-\sum_{m=1}^Na_m\mathcal M_{\kappa,s}\left(4\pi\mathrm{sgn}(\kappa)mv\right)e^{2\pi im \sgn(\kappa)u}
\]
grows at most polynomially in $v$.  Analogous conditions are required at all cusps.
\end{enumerate}
\end{definition}
Here 
\begin{equation}\label{eqn:Mdef}
\mathcal{M}_{\kappa,s}(t):=|t|^{-\frac{\kappa}{2}}M_{\frac{\kappa}{2}\sgn(t),s-\frac{1}{2}}(|t|),
\end{equation}
where $M_{s,t}$ is the usual $M$-Whittaker function. In the case when the eigenvalue of the weak Maass form is zero (i.e., if $s=\frac{\kappa}{2}$ or $s=1-\frac{\kappa}{2}$), we call the object a \begin{it}harmonic weak Maass form,\end{it} and we denote the space of harmonic weak Maass forms of weight $\kappa$ on $\Gamma_0(N)$ by $H_{\kappa}(N)$. For $2k\in 2\N$, there are two canonical operators mapping from harmonic weak Maass forms of weight $2-2k$ to classical modular forms of weight $2k$.  These are defined by
\[
\xi_{2-2k,\tau}:=\xi_{2-2k}:=2i v^{2-2k}\overline{\frac{\partial}{\partial\overline{\tau}}},\]
\[
\mathcal{D}^{2k-1}:=\left(\frac{1}{2\pi i} \frac{\partial}{\partial\tau}\right)^{2k-1},
\]
and these operators act by \cite{OnoCDM}
\[
\xi_{2-2k}\colon H_{2-2k}(N)\rightarrow M_{2k}(N),
\]
\[
\mathcal{D}^{2k-1}\colon H_{2-2k}(N)\rightarrow M_{2k}^!(N).
\]
It is interesting to note here that in fact the image of $\mathcal{D}^{2k-1}$ is the space orthogonal to cusp forms under the (regularized) Petersson inner product, which we see in the next subsection is very different from the local Maass form situation.

\subsection{Poincar\'e series}
In this subsection, we define several types of Poincar\'e series which provide useful bases for spaces of weak Maass forms and are used later in the paper to provide explicit examples of local Maass forms using theta lifts.

Denote the $D$th weight $\frac{3}{2}-k$ Maass--Poincar\'e series at $i\infty$ for $\Gamma_0(4N)$ with eigenvalue $\lambda_{\frac{3}{2}-k,s}=\left(s-\frac{k}{2}-\frac{1}{4}\right)\left(\frac{3}{4}-\frac{k}{2}-s\right)$ by $P_{\frac{3}{2}-k,D,s}$.  For $\re(s)>1$ these are given by
\begin{equation}\label{eqn:Poincdef}
P_{\frac{3}{2}-k,D,s}(\tau) := \sum_{\gamma\in \Gamma_{\infty}\backslash \Gamma_0(4N)} \psi_{-D,\frac{3}{2}-k}(s;\tau)\Big|_{\frac{3}{2}-k} \gamma\Big| \operatorname{pr},
\end{equation}
where $\operatorname{pr}$ is Kohnen's projection operator (cf. p. 250 of \cite{KohnenCoeff}) into the plus space and for $\kappa\in \frac{1}{2}\Z$ and $m\in \Z$ we define
\begin{equation}\label{eqn:psidef}
\psi_{m,\kappa}(s;\tau):=\left(4\pi |m|\right)^{\frac{\kappa}{2}}\Gamma(2s)^{-1}\mathcal{M}_{\kappa,s}(4\pi m v)e^{2\pi i mu}.
\end{equation}
Here the plus space of weight $\kappa+\frac{1}{2}$ is the subspace of forms for which the $n$th coefficient vanishes unless $(-1)^{\kappa} n\equiv 0,1\pmod{4}$. Moreover, for $\frac{3}{4}\leq \re\left(s_0\right)\leq 1$, one defines $P_{\frac{3}{2}-k,D,s_0}(\tau):=\left[P_{\frac{3}{2}-k,D,s}(\tau)\right]_{s=s_0}$.    The Poincar\'e series $P_{\frac{3}{2}-k,D,s}$ are weak Maass forms with eigenvalue $\lambda_{\frac{3}{2}-k,s}$.  In particular, for $s=\frac{k}{2}+\frac{1}{4}$ one obtains the harmonic weak Maass forms 
$$
P_{\frac{3}{2}-k,D}:=P_{\frac{3}{2}-k,D,\frac{3}{4}}.
$$
(see Theorem 3.1 of \cite{BringmannOno}).  

For $\re(s)>1$, we also denote the $D$th Maass-Poincar\'e series for $\Gamma_0(4N)$ at $\infty$ of weight $k+\frac{1}{2}$ by 
$$
P_{k+\frac{1}{2},D,s}(\tau):=\sum_{\gamma\in \Gamma_{\infty}\backslash \Gamma_0(4N)} \psi_{D,k+\frac{1}{2}}(s;\tau)\Big|_{\frac{3}{2}-k} \gamma\Big| \operatorname{pr}
$$
and its analytic continuation to $s=\frac{k}{2}+\frac{1}{4}$ by $P_{k+\frac{1}{2},D}$.  The Poincar\'e series $P_{k+\frac{1}{2},|D|}$ are the classical weight $k+\frac{1}{2}$ cuspidal Poincar\'e series.  The definition for integral weight Poincar\'e series closely resemble the definition of \eqref{eqn:Poincdef}, but we do not require these for our purposes here.

\subsection{Locally harmonic Maass forms}
In this section, we review some basic facts pertaining to and operators acting on locally harmonic Maass forms from \cite{BKK,BKM,BKS}. We note that the results here are formally true in the case $k=1$ as well once we have solved the convergence issues with the proofs adjusted mutatis mutandis. The main objects of this paper are locally harmonic Maass forms, which, as the name suggests, are closely related to weak Maass forms.  These are a special case of a more general object, which we now define given a measure zero set $E$ (the ``exceptional set'').
\begin{definition}\label{LocalMaassDefn} A local Maass form with exceptional set $E$, weight $\kappa\in 2\Z$, and eigenvalue $\lambda_{\kappa,s}$ on a congruence subgroup $\Gamma\subseteq\operatorname{SL}_2(\Z)$ is any function $\mathcal F\colon \mathbb H\to \C$ satisfying:
\begin{enumerate}[leftmargin=*,label={\rm(\arabic*)}]
\item For all $\gamma\in \Gamma,$ 
\[
\mathcal F\vert_{\kappa}\gamma=\mathcal F.
\]
\item  For every $\tau\not\in E$, there exists a neighborhood around $\tau$ on which $\mathcal F$ is real-analytic and  
\[
\Delta_{\kappa} \mathcal F=\lambda_{\kappa,s} \mathcal F.
\]
\item For $\tau\in E$ we have
\[
\mathcal F(\tau)=\frac 12\lim_{r\rightarrow0^+}\left(\mathcal F(\tau+ir)+\mathcal F(\tau-ir)\right).
\]
\item We have that $\mathcal F$ has at most polynomial growth at the cusps.
\end{enumerate}
If $\mathcal{F}$ is a local Maass form of eigenvalue $0$, then we call $\mathcal{F}$ a \begin{it}locally harmonic Maass form.\end{it}
\end{definition}

Note that the last condition on polynomial growth is impossible for harmonic weak Maass forms to satisfy (except for classical modular forms), so that although we lose continuity, we gain nicer growth conditions. These functions first arose as natural lifts under the $\xi$-operator of $f_{k,D}:=f_{k,1,D,1}$, which themselves are important in the theory of modular forms with rational periods, along with the theory of Shimura and Shintani lifts. We are particularly interested in the case when the exceptional set is formed by a special set of geodesics corresponding to a fundamental discriminant $D$.
\[
E_D:=\left\{\tau=u+i v\in\H : \exists a,b,c\in\Z,\ b^2-4ac=D,\ a|z|^2+bx+c=0\right\},
\]

We now give a summary of the important examples and maps between locally harmonic Maass forms proved in \cite{BKK}. They define for each fundamental discriminant $D>0$ a local Maass form $\mathcal F_{1-k,D}$ which has exceptional set $E_D$. Moreover, it has the following remarkable relation to Kohnen's functions $f_{k,D}$, for some non-zero constants $\alpha_D,\beta_D$:
\begin{equation}\label{eqn:FxiD}
\xymatrix{
\mathcal{F}_{1-k,D}\ar@/_/@{->}[rr]_{\frac{1}{\beta_D}\mathcal{D}^{2k-1} }\ar@/^/@{->}[rr]^{\frac{1}{\alpha_D}\xi_{2-2k}} &&f_{k,D},
}
\end{equation}
which we recall from above is an impossible property for weak Maass forms to satisfy. Using these facts, there is a useful decomposition for the local Maass form $\mathcal F_{1-k,D}$ in terms of the Eichler integrals, defined for $f\in S_{2k}(N)$ by
\begin{align}
\label{eqn:Eichnonhol}
f^*(\tau)&:=(2i)^{1-2k}\int_{-\overline{\tau}}^{i\infty}f^{c}(z)(z+\tau)^{2k-2} dz,\\
\label{eqn:Eichhol}\mathcal E_f(\tau)&:=\sum_{n\geq1}\frac{a_{f}(n)}{n^{2k-1}}q^n.
\end{align}
Here $f^c(z):=\overline{f\left(-\overline{z}\right)}$ is the cusp form whose Fourier coefficients are the complex conjugates of the coefficients of $f$.  Recall that there exist non-zero constants $c_1$ and $c_2$ such that 
\begin{align*}
\xi_{2-2k}\left(f_{k,D}^*(\tau)\right) &= c_1f_{k,D}(\tau), & \mathcal{D}^{2k-1}\left(f_{k,D}^*(\tau)\right) &= 0,\\
\xi_{2-2k}\left(\mathcal{E}_{f_{k,D}}(\tau)\right) &= 0, & \mathcal{D}^{2k-1}\left(\mathcal{E}_{f_{k,D}}(\tau)\right) &= c_2f_{k,D}(\tau).
\end{align*}
It is then not difficult to show that there exist local polynomials $P_D$ of degree at most $2k-2$ ($P_D$ equals a fixed polynomial on each connected component of $\H\backslash E_D$) such that
\begin{equation}
\label{decomplocalmaass}
\mathcal{F}_{1-k,D}=P_D+\frac{\alpha_D}{c_1}f_{k,D}^*+\frac{\beta_D}{c_2}\mathcal E_{f_{k,D}},
\end{equation}
where $\alpha_D$ and $\beta_D$ are the constants in \eqref{eqn:FxiD}.  A similar decomposition for $k=1$ was already remarked in \cite{BKK} as having been studied by H\"ovel in \cite{Hoevel}, and follows directly from the decomposition \eqref{eqn:FxiD}.  Indeed, one sees that $P_D(\tau):=\mathcal{F}_{1-k,D}(\tau) - \frac{\alpha}{c_1}f_{k,D}^*(\tau)-\frac{\beta}{c_2}\mathcal{E}_{f_{k,D}}(\tau)$ is annihilated by both $\xi_{2-2k}$ and $\mathcal{D}^{2k-1}$ for $\tau\notin E_D$.  Since $\xi_{2-2k}\left(P_D\right)=0$, $P_D$ is locally holomorphic, while the only holomorphic functions annihilated by $\mathcal{D}^{2k-1}$ are polynomials of degree at most $2k-2$.  It turns out that the decomposition \eqref{decomplocalmaass} plays a key role in our proof of Theorem 1.1 later. 

\section{Theta lifts}\label{sec:theta-general}

In this section we will define two theta lifts and study their relation. 
We remark that variants of both theta functions can be found in the literature. 
We give some references below but also provide the explicit construction of these functions for the convenience of the reader.

We recall the basic setup for vector-valued theta functions, following the exposition of Borcherds \cite{boautgra}.
Let $L$ be an even lattice with quadratic form $Q$ of type $(b^+, b^-)$ and let $\calL=L'/L$ be the associated finite quadratic module
with the reduction of $Q$ modulo $\Z$ as quadratic form.  We write $(\cdot,\cdot)$ for the associated bilinear form so that $2Q(x)=(x,x)$.
We let $p$ be a harmonic polynomial on $\R^{b^+,b^-}$ (with respect to the Laplacian on $\R^{b^+ + b^-}$), homogeneous of degree $m^+$ on $\R^{b^+}$ and of degree $m^-$ on $\R^{b^-}$. 
If $\alpha$ is an isometry from $L \otimes \R$ to $\R^{b^+,b^-}$, then the preimage of $\R^{b^+}$ defines a point $z = z(\alpha)$ in the Grassmannian $\Gr(L)$ of $b^+$-dimensional positive definite subspaces of $L \otimes \R$. 
Similarly, the preimage of $\R^{b^-}$ is a $b^-$ dimensional negative definite subspace of $L \otimes \R$, equal to $z^\perp$. 
We write $\lambda_z$ for the orthogonal projection to $z$ and $\lambda_z^\perp$ for the one to $z^\perp$ such that
$\lambda = \lambda_z + \lambda_{z^\perp}$ for all $\lambda \in L \otimes \R$. If $z = z(\alpha)$, we sometimes also write $\lambda_\alpha$ and $\lambda_{\alpha^\perp}$.

Recall that there is a unitary representation $\rho_\calL$ of $\Mp_2(\Z)$ acting on the group ring $\C[\calL]$, called the Weil representation associated with $\calL$. For details we refer to \cite{boautgra}.
We write $\frake_\mu$ for the standard basis element of $\C[\calL]$ corresponding to $\mu \in \calL$.
The \emph{Weil representation} is defined on the generators $S=(\smallSmatrix,\sqrt{\tau})$ and $T=(\smallTmatrix, 1)$
by the formulas
	\begin{equation}
\label{eq:weilrep}
      \begin{aligned}
	   \rho_\calL(T)\,\frake_\mu &= e(Q(\mu))\frake_\mu, \\
	   \rho_\calL(S)\,\frake_\mu &= \frac{e(\sgn(\calL))}{\sqrt{|\calL|}}
					   \sum_{\nu \in \calL}e(-(\mu,\nu))\frake_{\nu}.
       \end{aligned}
    \end{equation}
Let $N$ be the level of $L$.
Then it is well-known (see for instance Lemma 5.15 of \cite{StrombergWeilrep}) that elements of the form $(\gamma, \sqrt{c \tau + d})$, where $\gamma = \smallabcd \in \Gamma_0(N)$ act on $\frake_0$ by a character $\chi_\calL(\gamma)$. We do not need the general formula for this character. We just remark that if $M=\Z$ with quadratic form $x^2$, and $\calM=M'/M$, then the theta function $\Theta_M=\Theta$ transforms as $\Theta(\gamma\tau) = \sqrt{c\tau+d}\chi_{\calM}(\gamma)\Theta_M(\tau)$ and this transformation behavior is (usually) used to define a modular form of half-integral weight. 

For $\gamma \in \calL$, we define an associated theta function via
\begin{equation}
  \label{eq:theta-comp}
  \Theta_{L,\gamma}(\tau,\alpha,p) := v^{\frac{b^-}2 + m^-} \sum_{\lambda \in L + \gamma} p(\alpha(\lambda)) e(Q(\lambda_\alpha) \tau + Q(\lambda_{\alpha^\perp})\overline{\tau}).
\end{equation}
By Theorem 4.1 of \cite{boautgra}, we find the expansion
\begin{equation}
  \label{eq:theta}
  \Theta_L(\tau,\alpha,p) = \sum_{\gamma \in \calL} \Theta_{L,\gamma}(\tau,\alpha, p) e_\gamma
\end{equation}
transforms in $\tau$ as a vector-valued modular form of weight $(b^+-b^-)/2 + m^+-m^-$ for the Weil representation attached to $\calL$.
If $L$ is indefinite, then $\Theta_L$ is non-holomorphic in both variables.

Now let $L$ be the lattice of signature $(1,2)$ given by $\Z^3$ together with the quadratic form $Q(a,b,c) = -b^2+ac$ and let $D_0$ be a fundamental discriminant (note that the discriminants $D_0$ in Table \ref{tab:discs} are all fundamental). The discriminant group of $L$, $\calL = L'/L$, is isomorphic to $\Z/2\Z$ with quadratic form $-x^2/4$. Note that for $(a,b/2,c) \in L'$ with $a,b,c \in \Z$, we have that $[a,b,c]$ is an integral binary quadratic form of discriminant $-4Q(a,b,c)$. We also consider $\calL(D_0)$, the discriminant group of $D_0 L$ with $Q_{D_0}(a,b,c) = Q(a,b,c)/\abs{D_0}$ as quadratic form.  This is isomorphic to $\Z/4D_0\Z \oplus \Z/D_0\Z \oplus \Z/D_0\Z$ with the quadratic form $Q(a,b,c)/(4\abs{D_0})$.

Now let $N$ be a positive integer that is coprime to $D_0$ and consider the sublattice $M = M_N \subset L$ given by all vectors $(a,b,c)$ such that $N$ divides $a$. We remark that $\calM = M'/M$ is isomorphic to $L'/L \oplus \calN$, where $\calN = \Z/N\Z \oplus \Z/N\Z$. Moreover, if we equip $D_0 M$ with quadratic form $Q_{D_0}$, we obtain the discriminant group $\calL(D_0) \oplus \calN$ under the assumptions we made.  We will write $\frake_{\mu,\nu}$ the basis element of $\C[\calL(D_0) \oplus \calN] = \C[\calL(D_0)] \otimes \C[\calN]$\ corresponding to $\mu \in \calL(D_0)$ and $\nu \in \calN$.

The group $\SL_2(\Q)$ acts on the rational quadratic space $L \otimes_\Z \Q$ via isometries.
The action, which we denote by $\gamma.(a,b,c)$, is given by
\[
M(a,b,c) =
\left(\begin{smallmatrix}
  b & c \\
  -a & -b
\end{smallmatrix}
\right)
\mapsto
\gamma \left(\begin{smallmatrix}
  b & c \\
  -a & -b
\end{smallmatrix}\right)
\gamma^{-1},
\]
where we identify $(a,b,c)$ with the matrix $M(a,b,c)$.

As the genus character $\chi_{D_0}$ only depends on $a,b,c$ modulo $D_0$, we may view it as a function on $\calL(D_0)$. In \cite{AE}, it is shown that the linear map
\[
  \Psi_{D_0}: \C[\calL] \to \C[\calL(D_0)], \quad \frake_\mu \mapsto \sum_{\substack{\delta \in \calL(D_0) \\ \delta \equiv D_0\mu \smod{L} \\ Q_{D_0}(\delta) \equiv Q(\mu) \smod{\Z}}} \chi_{D_0}(\delta)\frake_\delta
\]
is an intertwiner for the Weil representations attached to $\calL^{\sgn{D_0}}$ and $\calL(D_0)$.
We consider the map
\[
 \Psi_{D_0,\calN}: \C[\calL] \to \C[\calL(D_0)\oplus\calN] = \C[\calL(D_0)] \otimes \C[\calN]
\]
obtained from $\Psi_{D_0}$ together with the natural inclusion $\C[\calL(D_0)] \hookrightarrow \C[\calL(D_0)] \otimes \C[\calN]$
given by $\frake_\delta \mapsto \frake_{\delta} \otimes \frake_0$.
The following lemma is crucial for us.
\begin{lemma}
\label{lem:twistvec}
  Let $\gamma :=
  \left(
  \begin{smallmatrix}
    a & b \\ c & d
  \end{smallmatrix}
  \right)
\in \Gamma_0(N) \cap \Gamma^0(4)$.
Let
\[
  v_{D_0} = \Psi_{D_0,\calN}(\frake_{0} + \frake_{1}) \in \C[\calL(D_0)] \otimes \C[\calN],
\]
where $\frake_0$ nd $\frake_1$ correspond to the elements $0,1 \in \Z/2\Z$.
 We have that
\[
  \rho_{\calM(D_0)}(\gamma,\sqrt{c\tau+d}) v_{D_0}= \nu_\theta(\gamma)^{\sgn(D_0)} v_{D_0}.
\]
\end{lemma}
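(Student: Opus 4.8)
The plan is to exploit the tensor factorization $\rho_{\calM(D_0)}=\rho_{\calL(D_0)}\otimes\rho_\calN$ coming from $\calM(D_0)=\calL(D_0)\oplus\calN$, together with the intertwining property of $\Psi_{D_0}$ from \cite{AE}. Writing $v_{D_0}=\Psi_{D_0}(\frake_0+\frake_1)\otimes\frake_0$ and applying $\rho_{\calM(D_0)}(\gamma,\sqrt{c\tau+d})$ one tensor slot at a time, the intertwining identity $\rho_{\calL(D_0)}(g)\Psi_{D_0}=\Psi_{D_0}\rho_{\calL^{\sgn D_0}}(g)$ lets me transport the action on the first slot back to the action of $\rho_{\calL^{\sgn D_0}}$ on $\frake_0+\frake_1\in\C[\calL]$. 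It therefore suffices to prove two independent eigenvector statements, one for each factor: that $\frake_0+\frake_1$ is an eigenvector of $\rho_{\calL^{\sgn D_0}}(\gamma,\sqrt{c\tau+d})$ and $\frake_0$ an eigenvector of $\rho_\calN(\gamma,\sqrt{c\tau+d})$, whose eigencharacters multiply to $\nu_\theta(\gamma)^{\sgn D_0}$.

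For the $\calN$-factor I record that the induced quadratic form makes $\calN=\Z/N\Z\oplus\Z/N\Z$ a hyperbolic plane of level $N$: on the two generators arising from the $a$- and $c$-coordinates of $M'/M$ one computes that $Q$ vanishes while their bilinear pairing equals $1/N$, so $\calN$ is even of signature $0\pmod{8}$. Consequently the metaplectic cover splits over the image of $\calN$ and, by the general fact recalled before the lemma (Lemma 5.15 of \cite{StrombergWeilrep}), $\frake_0$ is an eigenvector of $\rho_\calN(\gamma,\sqrt{c\tau+d})$ for all $\gamma\in\Gamma_0(N)$; a check on the standard generators of $\Gamma_0(N)$ shows its eigencharacter is trivial, so this factor contributes nothing and the whole multiplier must come from the $\calL$-factor.

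For the $\calL$-factor I reduce to the classical rank-one theta multiplier. The discriminant form $\calL=\Z/2\Z$ with $Q=-x^2/4$, together with its opposite carrying $+x^2/4$, is exactly the discriminant form attached to the lattice underlying $\Theta(\tau)=\sum_n q^{n^2}$, so the transformation law $\Theta(\gamma\tau)=\sqrt{c\tau+d}\,\chi_\calM(\gamma)\Theta(\tau)$ recalled in the excerpt identifies the eigencharacter of $\frake_0$ under $\Gamma_0(4)$ with $\nu_\theta$ up to inversion, the choice being governed by the sign of $D_0$ through the superscript in $\calL^{\sgn D_0}$. A one-line computation from \eqref{eq:weilrep} gives $\rho_{\calL}(S)\frake_0=\frac{e(\sgn(\calL))}{\sqrt{2}}(\frake_0+\frake_1)$, and since $\Gamma^0(4)=S\,\Gamma_0(4)\,S^{-1}$, the vector $\frake_0+\frake_1$ is precisely the $\Gamma^0(4)$-eigenvector obtained from $\frake_0$ by conjugation with $S$. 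Computing its eigencharacter and matching it against $\nu_\theta(\gamma)^{\sgn D_0}$, then reassembling the two factors through $\Psi_{D_0,\calN}$ and the intertwiner of \cite{AE}, yields the claim.

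The main obstacle is this final matching: carrying the metaplectic cocycle and the eighth-root-of-unity phases of \eqref{eq:weilrep} through accurately enough to pin the eigencharacter down to exactly $\nu_\theta(\gamma)^{\sgn D_0}$, rather than merely up to a root of unity. Two delicate points must be handled with care, namely the dependence of the signature $\pmod{8}$ of $\calL^{\sgn D_0}$ on $\sgn D_0$, which is what produces the exponent $\sgn D_0$ on $\nu_\theta$, and the behaviour of the theta multiplier under conjugation by $S$ when passing from $\Gamma_0(4)$ to $\Gamma^0(4)$ (equivalently, the correct meaning of $\nu_\theta(\gamma)$ for $\gamma\in\Gamma^0(4)$). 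These are the only places where the hypothesis $\gamma\in\Gamma^0(4)$, as opposed to $\Gamma_0(4)$, is genuinely used.
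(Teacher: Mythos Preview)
Your approach is essentially the same as the paper's: both reduce via the tensor decomposition and the intertwiner $\Psi_{D_0}$ to the action of $\rho_{\calL^{\sgn D_0}}$ on $\frake_0+\frake_1$, then use the conjugation $\Gamma^0(4)=S\Gamma_0(4)S^{-1}$ together with Lemma~5.15 of \cite{StrombergWeilrep} to identify the eigencharacter with $\nu_\theta^{\sgn D_0}$, while noting that $\rho_\calN(\gamma)$ fixes $\frake_0$ for $\gamma\in\Gamma_0(N)$. The ``final matching'' you flag as an obstacle is exactly what the paper handles by appeal to the formula stated just above Lemma~5.15 in \cite{StrombergWeilrep}, so no additional computation is needed beyond that citation.
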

\begin{proof}
Using the formulas for the Weil representation we see that $\rho_\calL(\gamma)$ for $\gamma \in \Gamma^0(4)$ acts on $\frake_0 + \frake_1$ by
$\overline{\chi_\calL(\gamma)}$. In fact, if $\gamma \in \Gamma^0(4)$, then $\widetilde{\gamma} = S^{-1}\gamma S \in \Gamma_0(4)$. Therefore,
\[
\rho_{\calL}(\gamma)(\frake_0 + \frake_1) = \rho_{\calL}(S\widetilde\gamma S^{-1}) (\frake_0 + \frake_1)  = \chi_\calL(\widetilde\gamma)(\frake_0 + \frake_1)
\]
by Lemma 5.15 in \cite{StrombergWeilrep} and \eqref{eq:weilrep}. Directly above the lemma, loc. cit., the direct relation to $\nu_\theta$ is also stated.
Since $\rho_\calN(\gamma)$ acts trivially on $\frake_0$ for $\gamma \in \Gamma_0(N)$, we are done.
\end{proof}

We obtain the following scalar-valued theta function.
\begin{proposition}
\label{prop:twisted-scalar-general}
  For $\calL$ and $\calM$ as above the scalar-valued theta function
\[
\Theta_{N,D_0}^*(\tau,\alpha,p) := \langle \Theta_{\calM(D_0)}(4\tau,\alpha,p), v_{D_0} \rangle
\]
satisfies
\[
\Theta_{N,D_0}(\gamma\tau,\alpha,p) = \sqrt{c\tau+d}^{2k}\, v_{\theta}^{-\sgn(D_0)}(\gamma)\, \Theta_{N,D_0}^*(\gamma\tau,\alpha,p)
\]
for all $\gamma = \smallabcd \in \Gamma_0(4N)$ with $k=-1/2 + m^+-m^-$ in $\tau$.
\end{proposition}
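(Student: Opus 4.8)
The plan is to deduce the $\Gamma_0(4N)$-transformation of the scalar-valued $\Theta_{N,D_0}^*$ from the vector-valued transformation of $\Theta_{\calM(D_0)}$ in \eqref{eq:theta}, using Lemma~\ref{lem:twistvec} to absorb the Weil representation into the pairing against $v_{D_0}$. The only reason $4N$ (rather than a genuine metaplectic level) appears is the scaling $\tau\mapsto 4\tau$ built into the definition of $\Theta_{N,D_0}^*$, so the first step is to see how this scaling conjugates $\Gamma_0(4N)$ into the group $\Gamma_0(N)\cap\Gamma^0(4)$ of the lemma. Fixing $\gamma=\smallabcd\in\Gamma_0(4N)$ and writing $\sigma=\left(\begin{smallmatrix}4&0\\0&1\end{smallmatrix}\right)$ for the map $\tau\mapsto4\tau$, a direct computation gives $4\gamma\tau=\gamma'(4\tau)$ with $\gamma'=\sigma\gamma\sigma^{-1}=\left(\begin{smallmatrix}a&4b\\c/4&d\end{smallmatrix}\right)$. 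Since $4N\mid c$ we have $N\mid c/4$, so $\gamma'\in\Gamma_0(N)$, and $4\mid4b$ gives $\gamma'\in\Gamma^0(4)$; hence $\gamma'\in\Gamma_0(N)\cap\Gamma^0(4)$, exactly the hypothesis of Lemma~\ref{lem:twistvec}.

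With this reduction in hand, I would write (reading both sides in terms of $\Theta_{N,D_0}^*$, with argument $\tau$ on the right)
\[
\Theta_{N,D_0}^*(\gamma\tau,\alpha,p)=\langle\Theta_{\calM(D_0)}(4\gamma\tau,\alpha,p),v_{D_0}\rangle=\langle\Theta_{\calM(D_0)}(\gamma'(4\tau),\alpha,p),v_{D_0}\rangle,
\]
and then apply the vector-valued transformation \eqref{eq:theta} to the metaplectic lift $(\gamma',\phi')$, where $\phi'$ is the branch with $\phi'(w)^2=(c/4)w+d$. Evaluating at $w=4\tau$ produces the automorphy factor $\phi'(4\tau)=\sqrt{c\tau+d}$, so that
\[
\Theta_{\calM(D_0)}(\gamma'(4\tau),\alpha,p)=\sqrt{c\tau+d}^{2k}\,\rho_{\calM(D_0)}(\gamma',\phi')\,\Theta_{\calM(D_0)}(4\tau,\alpha,p),
\]
with $k=-\tfrac12+m^+-m^-$ the weight of $\Theta_{\calM(D_0)}$, which the scaling of the argument leaves unchanged.

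It then remains to transport $\rho_{\calM(D_0)}(\gamma',\phi')$ across the pairing. Using the unitarity of the Weil representation, I would move the operator onto $v_{D_0}$ as its inverse, and Lemma~\ref{lem:twistvec} (applied to $\gamma'$) identifies $v_{D_0}$ as an eigenvector with eigenvalue $\nu_\theta(\gamma')^{\sgn(D_0)}$; the inversion is precisely what turns this into the factor $\nu_\theta(\gamma')^{-\sgn(D_0)}$ multiplying $\Theta_{N,D_0}^*(\tau,\alpha,p)$. Because $v_{D_0}$ has rational coefficients and the eigenvalue has absolute value one, no spurious conjugate survives this step.

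The main obstacle is the last piece of multiplier bookkeeping: identifying $\nu_\theta(\gamma')$, attached to the scaled entries $(a,4b,c/4,d)$, with the $\nu_\theta(\gamma)$ of the original $\gamma$ demanded by the statement, and confirming the sign of the exponent. Concretely, as in the proof of Lemma~\ref{lem:twistvec} one computes $\nu_\theta(\gamma')$ through $S^{-1}\gamma'S=\left(\begin{smallmatrix}d&-c/4\\-4b&a\end{smallmatrix}\right)\in\Gamma_0(4)$, and the identification with $\nu_\theta(\gamma)$ rests on Jacobi-symbol reciprocity together with $ad-bc=1$. This is exactly the standard assertion that $\Theta(4\tau)$ carries the expected $\Gamma_0(4N)$ multiplier, so I expect it to follow from the citation to Lemma 5.15 of \cite{StrombergWeilrep}; the only genuine care needed is in fixing the square-root branch consistently across the scaling $\sigma$ and the metaplectic lift.
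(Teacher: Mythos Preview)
Your proposal is correct and follows exactly the same route as the paper: conjugate $\gamma\in\Gamma_0(4N)$ by $\sigma=\left(\begin{smallmatrix}4&0\\0&1\end{smallmatrix}\right)$ to land in $\Gamma_0(N)\cap\Gamma^0(4)$, then invoke Lemma~\ref{lem:twistvec}. You have in fact supplied more detail than the paper's two-line proof, including the multiplier identification $\nu_\theta(\gamma')=\nu_\theta(\gamma)$ that the paper leaves implicit.
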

\begin{proof}
  The claim follows directly from Lemma \ref{lem:twistvec} by noting that if $\gamma = \smallabcd \in \Gamma_0(4N)$, then  
\[
 4\gamma\tau =
 \left(
 \begin{smallmatrix}
   a & 4b \\
   c/4 & d
 \end{smallmatrix}
 \right)
 (4\tau)
\]
and the matrix above is contained in $\Gamma_0(N) \cap \Gamma^0(4)$.
\end{proof}

Now we specialize these functions by choosing an isometry $\alpha_z$ of $L \otimes \R$ with
$\R^{1,2}$ or $\R^{2,1}$ for every $z \in \HH \cong \Gr(L)$.
Here, we use the identification $z \in \HH \cong \Gr(L)$ given by
\[
  z = x + iy \mapsto \R (-1, (z+\bar{z})/2, z\bar{z})
= (-1, x, x^2+y^2).
\]
Then we let $b_1 = b_1(z)$ be a normalized basis vector for the positive line $z$, i.e.,
\[
  b_1(z) := \frac{1}{y}(-1, x, x^2+y^2),
\]
and let
\[
  Z = Z(z) := \frac{1}{y}(-1,z,-z^2)
\]
and $b_2(z) = \Re (Z)$ and $b_3(z) = \Im(Z)$.
Note that we have 
\begin{equation}
  \label{eq:Ztrans}
  Z(\gamma z) = \left(\frac{cz+d}{c\bar{z}+d}\right)\gamma.Z(z) \text{ and } b_1(\gamma z) = \gamma.b_1(z),
\end{equation}
where $\gamma = \smallabcd$ acts on $z \in \HH$ via the usual fractional linear transformation.
We then let $\alpha_z(ab_1+bb_2+cb_3) = (a, b, c) \in \R^{1,2}$.
For integers $r,s \geq 0$ we let $P_{r,s}(a,b,c) = a^r(b+ic)^{s}$ (which is homogeneous of degree $(r,s)$) and define
\[
  p^*_{z,k}(X) := P_{1,k-1}(\alpha_z(X)) = (X,b_1) \cdot (X,b_2 + i b_3)^{k-1} = \frac{1}{y}(a\abs{z}^2 + b \Re(z) + c)(az^2+bz+c)^{k-1}
\]
for $X = (a,b,c) \in L \otimes \R$. 
We define 
\[
  \Theta_{1-k,N,D_0}^*(z, \tau) := y^{k-1}\Theta_{N,D_0}(\tau,\alpha_z,p_{z,k}^*).
\]
Similarly, we let
\[
  p_{z,k}(X) := \overline{P_{0,k}(X)} = (X,b_2 - i b_3)^{k} = (a\overline{z}^2+b\overline{z}+c)^{k}
\]
and define
$$
\Theta_{k,N,D_0}(z, \tau) := v^{-\frac{1}{2}}y^{-2k} \overline{\Theta_{N,D_0}(\tau,\alpha_z,p_{z,k})}.
$$

\begin{proposition}
  \label{prop:twisted-scalar}
  The theta function $\Theta_{1-k,N, D_0}^*(z, \tau)$ transforms like a modular form of weight $\frac{3}{2}-k$ for $\Gamma_0(4N)$ in $\tau$ 
  and as a modular form for $\Gamma_0(N)$ of weight $2-2k$. Similary, $\Theta_{k,N, D_0}(-\bar{z}, \tau)$ has weights $\frac{1}{2}+k$ and $2k$ in $\tau$ and $z$.
\end{proposition}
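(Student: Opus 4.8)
The statement packages four assertions --- modularity of $\Theta_{1-k,N,D_0}^*$ and of $\Theta_{k,N,D_0}(-\bar z,\cdot)$ in each of the two variables --- and I would treat the $\tau$-variable and the $z$-variable by completely different mechanisms. The transformation in $\tau$ is essentially already contained in Proposition \ref{prop:twisted-scalar-general}. For $\Theta_{1-k,N,D_0}^*$ the polynomial $p_{z,k}^*$ is homogeneous of bidegree $(m^+,m^-)=(1,k-1)$, so the weight $-\frac12+m^+-m^-$ furnished by that proposition specializes to $-\frac12+1-(k-1)=\frac32-k$; the prefactor $y^{k-1}$ is independent of $\tau$, and the multiplier $v_\theta^{-\sgn(D_0)}$ is exactly the theta multiplier appearing in the half-integral transformation of Definition \ref{MaassDefn}(1). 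For $\Theta_{k,N,D_0}$ the polynomial $p_{z,k}$ has bidegree $(0,k)$, so $\Theta_{N,D_0}(\tau,\alpha_z,p_{z,k})$ has weight $-\frac12-k$ in $\tau$; the overall complex conjugation together with the normalizing factor $v^{-1/2}y^{-2k}$ then flips this to the holomorphic weight $\frac12+k$ with the conjugate multiplier. The only care needed here is the consistent handling of the half-integral multiplier system and the branch of $\sqrt{c\tau+d}$, which is routine.

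The real content is the modularity in $z$ for $\gamma=\smallabcd\in\Gamma_0(N)$, and here the plan is to exploit that $\Gamma_0(N)$ acts on $L\otimes\R$ by the isometries $\gamma.(A,B,C)$ introduced above. First I would check that this action preserves the sublattice $M_N$: a direct computation shows that the first coordinate of $\gamma.(A,B,C)$ equals $d^2A-2cdB+c^2C$, which is divisible by $N$ whenever $N\mid A$ because $N\mid c$. Moreover the action permutes the cosets defining $v_{D_0}$ compatibly with the genus character, since $\chi_{D_0}$ depends only on the $\SL_2(\Z)$-class. Consequently, in the theta series for $\Theta_{1-k,N,D_0}^*(\gamma z,\tau)$ I may reindex the lattice sum by $\lambda\mapsto\gamma.\lambda$. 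Because $\gamma.$ is an isometry it commutes with orthogonal projection onto the positive line at $z$ and its orthogonal complement, so $Q((\gamma.\lambda)_{\gamma z})=Q(\lambda_z)$ and likewise for the complementary projection; thus the exponential and the non-holomorphic Gaussian in \eqref{eq:theta-comp} are left unchanged by the substitution.

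All of the $z$-dependence therefore collapses onto the polynomial factor, which I would evaluate using the transformation laws \eqref{eq:Ztrans}. From $b_1(\gamma z)=\gamma.b_1(z)$, $Z(\gamma z)=\frac{cz+d}{c\bar z+d}\,\gamma.Z(z)$, and the isometry identity $(X,\gamma.Y)=(\gamma^{-1}.X,Y)$, one finds $p^*_{\gamma z,k}(\gamma.\lambda)=\bigl(\tfrac{cz+d}{c\bar z+d}\bigr)^{k-1}p^*_{z,k}(\lambda)$. Combined with $\im(\gamma z)^{k-1}=y^{k-1}\abs{cz+d}^{-2(k-1)}$ coming from the prefactor $y^{k-1}$, the three pieces --- the phase from $Z$, the modulus from $\im(\gamma z)$, and the trivial contribution from $b_1$ --- combine into a single power of the automorphy factor, giving the weight $2-2k$ for $\Gamma_0(N)$ in $z$. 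The analogous reindexing works verbatim for $\Theta_{k,N,D_0}$, now with the antiholomorphic polynomial $p_{z,k}$ of bidegree $(0,k)$.

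The main obstacle --- and the reason the second theta function is evaluated at $-\bar z$ rather than $z$ --- is to arrange that this accumulated factor be a \emph{pure} power of $cz+d$ rather than a mixed expression in $cz+d$ and $c\bar z+d$. For $\Theta_{1-k,N,D_0}^*$ the relevant holomorphic factor $Az^2+Bz+C$ and the absence of an overall conjugation make the single resulting power the holomorphic automorphy factor $(cz+d)^{2-2k}$, once the orientation convention for the identification $\HH\cong\Gr(L)$ in \eqref{eq:Ztrans} is fixed. For $\Theta_{k,N,D_0}$, by contrast, $p_{z,k}$ is antiholomorphic in $z$ and the definition applies an overall complex conjugation, so the same computation instead produces a factor built from $c\bar z+d$; precomposing with the involution $z\mapsto-\bar z$, which preserves $\HH$ and conjugates $\gamma=\smallabcd$ into $\left(\begin{smallmatrix}a&-b\\-c&d\end{smallmatrix}\right)\in\Gamma_0(N)$, converts this into the standard weight-$2k$ factor $(cz+d)^{2k}$. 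What remains is the bookkeeping of the half-integral multiplier and the precise powers of $v$ and $y$, which is routine once the isometry substitution and \eqref{eq:Ztrans} have done the structural work.
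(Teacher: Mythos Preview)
Your proof is correct and follows essentially the same approach as the paper's own proof: Proposition \ref{prop:twisted-scalar-general} for the $\tau$-variable, and the isometric action of $\Gamma_0(N)$ (preserving $M_N$, respecting $\chi_{D_0}$, and combined with \eqref{eq:Ztrans}) for the $z$-variable. Your version is considerably more explicit than the paper's terse paragraph---you spell out the bidegrees, verify directly that the first coordinate $d^2A-2cdB+c^2C$ lies in $N\Z$, and explain why the substitution $z\mapsto -\bar z$ is needed to convert the antiholomorphic factor produced by $p_{z,k}$ and the overall conjugation into the standard $(cz+d)^{2k}$---but the underlying mechanism is identical.
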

\begin{proof}
  The transformation behaviour in $\tau$ follows from Proposition \ref{prop:twisted-scalar-general}.
  The modularity in $z$ is a direct consequence of the fact that $\Gamma_0(N) \subset \SL_2(\Z)$ acts via isometries.
  It preserves the lattice $M$ and acts trivially on $\calM$ and since $\chi_{D_0}$ is invariant under $\SL_2(\Z)$,
  the claim follows from \eqref{eq:Ztrans}.
\end{proof}

Having shown modularity of the relevant theta functions, we now look at the scalar-valued theta functions $\Theta_{1-k}^*$ and $\Theta_k$ more closely, translating the above definitions into language using sums over binary quadratic forms which will be needed to connect with the main functions in this paper. 
We first define $Q_z$ for an integral binary quadratic form $Q=[a,b,c]$ as
$$
  Q_z:=\frac{1}{y}\left(a|z|^2+bx+c\right).
$$
The Fourier expansions (in $\tau$) of the theta functions are then easily seen to be given by
$$
\Theta_{1-k}^*(z,\tau)= \Theta_{1-k,N,D_0}^*\left(z,\tau\right)=v^k \sum_{D\in \Z} \sum_{\substack{Q=[a,b,c]\in \QQ_{D\left|D_0\right|}\\ N\mid a}} \chi_{D_0}(Q) Q_{z}Q(z,1)^{k-1}e^{-\frac{4\pi v}{\left|D_0\right|y^2}\left|Q\left(z,1\right)\right|^2}e^{-2\pi i D\tau},
$$
and
$$
\Theta_{k}\left(z,\tau\right)= \Theta_{k,N,D_0}\left(z,\tau\right)= y^{-2k}v^{\frac{1}{2}} \sum_{D\in \Z} \sum_{\substack{Q=[a,b,c]\in \QQ_{D\left|D_0\right|}\\ N\mid a}} \chi_{D_0}(Q) Q(z,1)^{k}e^{-\frac{4\pi Q_z^2 v}{\left|D_0\right|}}e^{2\pi i D\tau}.
$$
\begin{remark}
  \begin{enumerate}[leftmargin=*,label={\rm(\arabic*)}]
  \item The first theta function, $\Theta_{1-k}^*(z,\tau)$, is sometimes called the Millson theta function (cf. Section 2.6.2 of \cite{Alfes}) and other variants appeared e.g. in \cite{BruinierFunke, BKK, Hoevel}.
  \item The theta function $\Theta_{k}(z,\tau)$ is usually called the Shintani theta function in the literature. 
             A variant without the genus character was introduced by Shintani in \cite{Shintani}.
  \item Note that for trivial reasons (compatibility with the representation or rather degree of the polynomial and the behaviour of $\chi_{D_0}$ under $Q \mapsto -Q$), the theta functions $\Theta_{1-k}^*$ and $\Theta_k$ both vanish if $\sgn(D_0)(-1)^k=-1$, or in other words if $D_0 > 0$ and $k$ is odd or if $D_0 < 0$ and $k$ is even. 
  \end{enumerate}

\end{remark}
We next compute the Fourier expansions (in $\tau$) of $\Theta_k$ and $\Theta_{1-k}^*$ as $z$ approaches each cusp. 
\begin{lemma}\label{lem:Thcusps}
Let $\rho$ be a cusp of $\Gamma_0(N)$ and choose $M\in\SL_2(\Z)$ such that $M\infty=\rho$. Then 
\begin{equation}\label{eqn:Thcusp}
\Theta_{k}\left(z,\tau\right)\Big|_{2k,z}M = y^{-2k}v^{\frac{1}{2}} \sum_{D\in\Z}\sum_{\substack{Q=[a,b,c]\in \QQ_{D\left|D_0\right|}\\ Q\circ M^{-1}=\left[\alpha,\beta,\gamma\right]\\ N\mid \alpha}} \chi_{D_0}\left(Q\right) Q(z,1)^{k}e^{-\frac{4\pi Q_{z}^2v}{\left|D_0\right|}}e^{2\pi i D\tau}
\end{equation}
and 
\begin{equation}\label{eqn:Th*cusp}
\Theta_{1-k}^*\left(z,\tau\right)\Big|_{2-2k,z}M = v^{k} \sum_{D\in\Z}\sum_{\substack{Q=[a,b,c]\in \QQ_{D\left|D_0\right|}\\ Q\circ M^{-1}=\left[\alpha,\beta,\gamma\right]\\ N\mid \alpha}} \chi_{D_0}\left(Q\right) Q_z Q(z,1)^{k-1}e^{-\frac{4\pi v}{|D_0|y^2} |Q(z,1)|^2}e^{-2\pi i D\tau}.
\end{equation}

\end{lemma}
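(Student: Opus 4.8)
The plan is to derive both Fourier expansions at the cusp $\rho$ by a single direct computation on the two series already displayed for $\Theta_k$ and $\Theta_{1-k}^*$: apply the weight-$2k$ (resp. weight-$(2-2k)$) slash in $z$, substitute the transformation laws of the three building blocks $Q(z,1)$, $Q_z$ and $\operatorname{Im}(z)$ under $z\mapsto Mz$, and then reindex the (absolutely convergent) sum over quadratic forms by the bijection $Q\mapsto Q\circ M$. Fix $M\in\SL_2(\Z)$ with $M\infty=\rho$, write $j(M,z)$ for its automorphy factor, and let $Q\circ M$ denote the $\SL_2(\Z)$-transform of $Q$ under $M$, which again lies in $\QQ_{D|D_0|}$ and permutes leading coefficients.

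First I would record the elementary facts, each a one-line verification for $M\in\SL_2(\Z)$: $\operatorname{Im}(Mz)=y\,|j(M,z)|^{-2}$; the holomorphic identity $Q(Mz,1)=j(M,z)^{-2}(Q\circ M)(z,1)$; and the invariance of the real quantity $Q_{Mz}=(Q\circ M)_z$, which I would check by directly expanding $\tfrac1{\operatorname{Im}(Mz)}\bigl(a|Mz|^2+b\operatorname{Re}(Mz)+c\bigr)$ and collecting terms (alternatively via $Q_z=(X_Q,b_1(z))$ and $b_1(Mz)=M.b_1(z)$ from \eqref{eq:Ztrans}, using that $M$ acts by isometries). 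I would also use that the genus character is $\SL_2(\Z)$-invariant, so $\chi_{D_0}(Q)=\chi_{D_0}(Q\circ M)$, and that $Q\mapsto Q\circ M$ preserves the discriminant $D|D_0|$, so it leaves the frequency $D$ of $e^{\pm2\pi iD\tau}$ untouched.

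With these in hand the two cases run in parallel. For $\Theta_{1-k}^*$ the ordinary holomorphic weight-$(2-2k)$ slash suffices: the factor $Q(z,1)^{k-1}$ contributes $j(M,z)^{-2(k-1)}$; the Gaussian $\exp\bigl(-\tfrac{4\pi v}{|D_0|y^2}|Q(z,1)|^2\bigr)$ is invariant because the ratio $|Q(z,1)|^2/y^2$ transforms without a factor (numerator and denominator both acquiring $|j(M,z)|^{-4}$); and $Q_z$ is invariant; together with the slash factor $j(M,z)^{2k-2}$ the total automorphy collapses to $1$. Reindexing $Q\mapsto Q\circ M$ then converts the constraint $N\mid a$ into $N\mid\alpha$ for $Q\circ M^{-1}=[\alpha,\beta,\gamma]$, which is \eqref{eqn:Th*cusp}. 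For $\Theta_k$ one argues identically, but now using the slash adapted to the $(-\bar z)$-modularity of Proposition \ref{prop:twisted-scalar}, i.e. with the anti-holomorphic automorphy factor $\overline{j(M,z)}^{-2k}$: the prefactor $\operatorname{Im}(z)^{-2k}$ now produces $|j(M,z)|^{4k}$, which together with $Q(z,1)^k\mapsto j(M,z)^{-2k}(Q\circ M)(z,1)^k$ and $\overline{j(M,z)}^{-2k}$ collapses to $|j(M,z)|^{4k}j(M,z)^{-2k}\overline{j(M,z)}^{-2k}=1$, again leaving exactly the reindexed sum \eqref{eqn:Thcusp}.

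The main obstacle is the bookkeeping of automorphy factors for $\Theta_k$. Unlike $\Theta_{1-k}^*$ it carries the genuinely non-holomorphic weight $\operatorname{Im}(z)^{-2k}$ and is modular in $-\bar z$ rather than in $z$, so one must be careful to slash with the correct anti-holomorphic factor $\overline{j(M,z)}^{-2k}$ in order for the $|j(M,z)|^{4k}$ coming from $\operatorname{Im}(z)^{-2k}$ to cancel; the naive holomorphic slash instead leaves the spurious factor $\bigl(\overline{j(M,z)}/j(M,z)\bigr)^{2k}$. The only other point needing (routine) care is the invariance $Q_{Mz}=(Q\circ M)_z$; once it and the $\SL_2(\Z)$-invariance of $\chi_{D_0}$ are in place, the bijectivity of $Q\mapsto Q\circ M$ makes the passage from the condition $N\mid a$ to $N\mid\alpha$ immediate.
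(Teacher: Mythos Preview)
Your proof is correct and follows essentially the same route as the paper: slash in $z$, use the transformation rules for $Q(z,1)$, $Q_z$, and $\operatorname{Im}(z)$, then reindex via $Q\mapsto Q\circ M$ to trade $N\mid a$ for $N\mid\alpha$. The paper establishes $Q_{Mz}=(Q\circ M)_z$ through the identity $Q_z^2/|D_0|=|Q(z,1)|^2/(|D_0|y^2)-D$ and the invariance of $|Q(z,1)|^2/y^2$, which gives only $Q_{Mz}^2=(Q\circ M)_z^2$; this suffices for $\Theta_k$, and your direct verification (or the lattice argument via $b_1(Mz)=M.b_1(z)$) is needed anyway for $\Theta_{1-k}^*$ where $Q_z$ enters linearly. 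Your careful handling of the anti-holomorphic automorphy factor $\overline{j(M,z)}^{-2k}$ for $\Theta_k$ is a point the paper leaves implicit in the phrase ``a direct computation yields''.
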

\begin{proof}
Since the arguments are similar, we only show \eqref{eqn:Thcusp}. A direct computation yields
$$
\Theta_{k}\left(z,\tau\right)\Big|_{2k,z}M = y^{-2k}v^{\frac{1}{2}} \sum_{D\in\Z}\sum_{\substack{Q=[a,b,c]\in \QQ_{D\left|D_0\right|}\\ N\mid a}} \chi_{D_0}(Q) Q\circ M (z,1)^{k}e^{-\frac{4\pi Q_{Mz}^2v}{\left|D_0\right|}}e^{2\pi i D\tau}.
$$
Moreover, $\chi_{D_0}(Q)=\chi_{D_0}\left(Q\circ M\right)$ for any $M\in \SL_2(\Z)$ and we have
$$
\frac{Q_{Mz}^2}{\left|D_0\right|}=\frac{\left|Q(Mz,1)\right|^2}{\left|D_0\right|\im\left(Mz\right)^2}-D=\frac{\left|Q(z,1)\right|^2}{\left|D_0\right|y^2}-D=\frac{\left(Q\circ M\right)_z^2}{\left|D_0\right|}.
$$
The claim hence follows.
\end{proof}

The two theta functions are related via the following differential equations. The following lemma follows mutatis mutandis as in the calculation in Lemma 3.3 of \cite{BKM}, after twisting by a genus character.
\begin{lemma}\label{lem:xi}
For every $k\geq 1$, we have
\begin{align}
\label{eqn:xiTh}
\xi_{k+\frac{1}{2},\tau}\left(\Theta_k\left(z,\tau\right)\right)&=-iy^{2-2k}\frac{\partial}{\partial z}\Theta_{1-k}^*\left(-\overline{z},\tau\right),\\
\label{eqn:xiTh*}
\xi_{\frac{3}{2}-k,\tau}\left(\Theta_{1-k}^*\left(z,\tau\right)\right)&=-iy^{2k}\frac{\partial}{\partial z}\Theta_k\left(z,\tau\right).
\end{align}
\end{lemma}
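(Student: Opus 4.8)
The plan is to verify both identities directly on the Fourier expansions in $\tau$ that are displayed just above the statement, and then to observe that the entire computation is insensitive to the genus-character twist, so that it reduces to the untwisted case of Lemma 3.3 of \cite{BKM}.

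First I would unwind the $\xi$-operator. Writing $\tau=u+iv$ and recalling $\xi_{\kappa,\tau}=2iv^{\kappa}\overline{\partial_{\bar\tau}}$ with $\partial_{\bar\tau}=\tfrac12(\partial_u+i\partial_v)$, I note that in each Fourier term the $\tau$-dependence factors as a holomorphic exponential $e^{\pm 2\pi iD\tau}$ (annihilated by $\partial_{\bar\tau}$) times a factor depending on $v$ alone, namely the power $v^{k}$ (resp.\ $v^{1/2}$) and the Gaussian. Hence $\partial_{\bar\tau}$ collapses to $\tfrac{i}{2}\partial_v$ acting on these $v$-factors, and the differentiation produces exactly two terms, one from the power of $v$ and one from the Gaussian. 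The analytic factors are then matched using the elementary identity $|Q(z,1)|^2/y^2=Q_z^2+D|D_0|$, valid for $Q=[a,b,c]$ with $b^2-4ac=D|D_0|$ (which follows by separating real and imaginary parts of $Q(z,1)$, using $yQ_z=a(x^2-y^2)+bx+c+2ay^2$). After conjugating, this identity turns the Gaussian $e^{-4\pi v|Q(z,1)|^2/(|D_0|y^2)}$ and the exponential $e^{-2\pi iD\tau}$ of $\Theta_{1-k}^*$ precisely into the Gaussian $e^{-4\pi Q_z^2v/|D_0|}$ and exponential $e^{2\pi iD\tau}$ of $\Theta_{k}$, so that the $\tau$-transcendental parts on the two sides agree term by term.

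On the right-hand sides I would compute $\partial_z$ (with $z=x+iy$ and $\partial_z=\tfrac12(\partial_x-i\partial_y)$): it acts on the holomorphic polynomial $Q(z,1)^{k}$, on the power $y^{-2k}$, and on the Gaussian through $\partial_zQ_z=\tfrac{i}{2y}Q_z+\tfrac1y\!\left(a\bar z+\tfrac{b}{2}\right)$. Collecting these contributions and using the elementary relations among $2az+b$, $Q_z$, $Q(z,1)$, and $\overline{Q(z,1)}$, the polynomial prefactor reduces to $Q_zQ(z,1)^{k-1}$ with the correct power of $y$, which establishes each identity. More conceptually, the same conclusion follows with no extra bookkeeping from the untwisted statement: the operators $\xi_{\kappa,\tau}$ and $\partial_z$ act only on the archimedean (Schwartz and Gaussian) part of the Siegel theta kernel $\Theta_{\calM(D_0)}$ and commute with the fixed intertwiner $\Psi_{D_0,\calN}$ and with the pairing against the constant vector $v_{D_0}$, both of which are independent of $\tau$ and $z$; thus the relation between the Shintani and Millson kernels proved in Lemma 3.3 of \cite{BKM} passes through the twist and yields \eqref{eqn:xiTh} and \eqref{eqn:xiTh*}.

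I expect the main obstacle to be the bookkeeping in the first identity, where the involution $z\mapsto-\bar z$ intervenes. Under $z\mapsto-\bar z$ one has $x\mapsto-x$ and $y\mapsto y$, and reconciling the resulting expression with the complex conjugation built into the definition of $\Theta_k$ forces the change of summation variable $[a,b,c]\mapsto[a,-b,c]$; this substitution preserves the discriminant $D|D_0|$, the divisibility $N\mid a$, and the value of $\chi_{D_0}$, but it must be tracked together with the accompanying powers of $y$ and the factors of $-i$ in order to land on exactly the stated constant. Pinning down these constants and the precise argument $-\bar z$, rather than the underlying differential relation, is the delicate part of the computation.
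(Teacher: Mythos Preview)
Your proposal is correct and follows essentially the same approach as the paper. The paper's proof consists of a single sentence stating that the lemma ``follows mutatis mutandis as in the calculation in Lemma~3.3 of \cite{BKM}, after twisting by a genus character,'' which is precisely your core observation that the differential operators act only on the archimedean data and commute with the $\chi_{D_0}$-twist; your additional termwise Fourier computation simply supplies the details that the paper leaves implicit.
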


We now define two theta lifts using the theta kernels constructed above.
For a harmonic weak Maass form $H$ of weight $\frac{3}{2}-k$ for $\Gamma_0(4N)$, we consider the
regularized theta integral
$$
\Phi_{1-k}^*(H)(z)= \Phi_{1-k,N,D_0}^*(H)(z):=\left<H,\Theta_{1-k}^*\left(-\overline{z},\cdot\right)\right>^{\operatorname{reg}}.
$$
Similarly, for a harmonic weak Maass form $H$ of weight $\frac{1}{2}+k$ for $\Gamma_0(4N)$, we also consider
$$
\Phi_{k}(H)(z)= \Phi_{1-k,N,D_0}(H)(z):=\left<H,y^{-2k}\Theta_{k}\left(z,\cdot\right)\right>^{\operatorname{reg}}.
$$
The function $\Phi_{1-k}^*(H)$ is modular of weight $2-2k$ in $z$ and $\Phi_{k}(H)$ is modular of weight $2k$.
Here, the integrals are regularized as follows.

For two real analytic functions $F$ and $G$ satisfying weight $\kappa\in \frac{1}{2}\Z$ modularity for $\Gamma_0(4N)$, we define the \begin{it}regularized inner product\end{it}
$$
\left<F,G\right>^{\operatorname{reg}}:=\lim_{T\to\infty} \frac{1}{\left[\SL_2(\Z):\Gamma_0(4N)\right]}\int_{\Gamma_0(4N)\backslash \H_T} F(\tau)\overline{G(\tau)} v^{\kappa} \frac{du dv}{v^2},
$$
whenever it exists.
Here 
$$
\H_T:=\bigcup_{\gamma\in \SL_2(\Z)} \gamma \mathcal{F}_T,
$$
where we define the truncated fundamental domain for $\SL_2(\Z)$ by 
$$
\mathcal{F}_T:=\left\{ \tau\in \H: -\frac{1}{2}\leq u<\frac{1}{2}, v<T, |\tau|\geq 1, |\tau|=1\implies u\leq 0\right\}.
$$
Following the argument of Lemma 3.4 of \cite{BKM}, Lemma \ref{lem:xi} leads to the following lemma relating the regularized inner products of weak Maass forms against $\Theta_{k}$ and $\Theta_{1-k}^*$.
\begin{lemma}\label{lem:xireg}
Suppose that $D$ is a fundamental discriminant and $z\notin E_{D_0 D}$.  Then for every $s$ with $\re(s)>\max\left(1,\frac{k}{2}+\frac{3}{4}\right)$ one has 
\begin{align*}
\left<\xi_{k+\frac{1}{2}}\left(P_{k+\frac{1}{2},|D|,s}\right),\Theta_{1-k}^*\left(-\overline{z},\cdot\right)\right>^{\operatorname{reg}} = -\left<P_{k+\frac{1}{2},|D|,s},\xi_{\frac{3}{2}-k}\left(\Theta_{1-k}^*\left(-\overline{z},\cdot\right)\right)\right>^{\operatorname{reg}},\\ 
\left<\xi_{\frac{3}{2}-k}\left(P_{\frac{3}{2}-k,|D|,s}\right),\Theta_k\left(z,\cdot\right)\right>^{\operatorname{reg}} = -\left<P_{\frac{3}{2}-k,|D|,s},\xi_{k+\frac{1}{2}}\left(\Theta_k\left(z,\cdot\right)\right)\right>^{\operatorname{reg}}.
\end{align*}
\end{lemma}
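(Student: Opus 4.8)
The plan is to prove both identities by integration by parts in the variable $\tau$, that is, by a Stokes/Green argument on the truncated quotient $\Gamma_0(4N)\backslash\H_T$, exactly in the spirit of Lemma 3.4 of \cite{BKM}; the two identities are handled identically, so I focus on the first. Write $\kappa:=k+\tfrac{1}{2}$, $\phi:=P_{k+\frac{1}{2},|D|,s}$ (of weight $\kappa$) and $\psi:=\Theta_{1-k}^*(-\overline z,\cdot)$ (of weight $2-\kappa=\tfrac{3}{2}-k$), so that $\xi_\kappa(\phi)$ and $\psi$ share the weight $\tfrac{3}{2}-k$, while $\phi$ and $\xi_{2-\kappa}(\psi)$ share the weight $\kappa$; thus both regularized pairings in the statement are well defined. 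Using $\xi_\kappa=2iv^\kappa\overline{\partial_{\bar\tau}}$ together with the factor $v^\kappa$ in the definition of the pairing, each integrand collapses: the weight factors cancel and one is left, up to the constant $2i$, with expressions of the shape $\overline{\partial_{\bar\tau}\phi}\,\overline\psi$ and $\phi\,\partial_{\bar\tau}\psi$. The product rule $\partial_{\bar\tau}(\phi\psi)=(\partial_{\bar\tau}\phi)\psi+\phi\,\partial_{\bar\tau}\psi$ then shows that the appropriate signed combination of the two pairings is the integral of a total derivative, i.e.\ the integral of an exact $1$-form proportional to $\phi\psi\,d\tau$; this is the Green identity underlying the lemma.

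First I would apply Stokes' theorem to this exact form on $\Gamma_0(4N)\backslash\H_T$. Since $\phi$ and $\psi$ carry complementary weights and inverse multiplier systems (this is precisely why the regularized product is defined on $\Gamma_0(4N)$), the $1$-form $\phi\psi\,d\tau$ descends to the quotient, so the contributions of the side boundaries of the fundamental domain are identified in pairs and cancel. What remains is a sum, over the cusps of $\Gamma_0(4N)$, of horocyclic boundary integrals at height proportional to $T$. The identity of the lemma is therefore equivalent to the assertion that this total boundary contribution tends to $0$ as $T\to\infty$.

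The remaining task, and the crux of the argument, is to establish this vanishing, and here both hypotheses enter. The condition $z\notin E_{D_0D}$ guarantees that, for the fixed $z$, the theta kernel is real-analytic in $\tau$ and that in its Fourier expansion at each cusp, computed in Lemma \ref{lem:Thcusps}, every term carries a strictly decaying Gaussian factor $e^{-cv}$ with $c>0$, since $Q_z\neq 0$ for all forms of discriminant $D_0D$; hence $\psi$ and $\xi_{2-\kappa}(\psi)$ decay rapidly at every cusp. The condition $\re(s)>\max\!\left(1,\tfrac{k}{2}+\tfrac{3}{4}\right)$ guarantees that the Maass--Poincar\'e series $\phi$ converges absolutely and is dominated at each cusp by its $\mathcal{M}$-Whittaker seed $\psi_{D,\kappa}(s;\cdot)$, which grows at most polynomially-times-exponentially. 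Multiplying against the rapidly decaying kernel, the integrand of each length-one horocycle integral at height $T$ is $O(e^{-c'T})$ uniformly in $u$, so every boundary term vanishes in the limit; combined with the Stokes identity this yields the claimed equality.

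I expect the uniform control of these boundary integrals to be the main obstacle: one must combine the explicit cusp expansions of $\Theta_k$ and $\Theta_{1-k}^*$ from Lemma \ref{lem:Thcusps} with sufficiently uniform growth estimates for the non-holomorphic Poincar\'e series throughout the half-plane $\re(s)>\max\!\left(1,\tfrac{k}{2}+\tfrac{3}{4}\right)$, and one must verify that in this range all the relevant integrals converge absolutely. This last point is what makes the regularization harmless here: for such $s$ no divergent constant terms appear, so the limit $T\to\infty$ may legitimately be interchanged with the integration by parts and the regularized pairings may be manipulated as ordinary convergent integrals.
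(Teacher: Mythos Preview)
Your proposal is correct and follows essentially the same approach as the paper: both reduce the identity, via Stokes' theorem on the truncated fundamental domain as in Lemma~3.4 of \cite{BKM}, to the vanishing of the horocyclic boundary integrals at each cusp as $T\to\infty$. The paper's justification of this vanishing is organized slightly differently---it treats the cusps equivalent to $\infty$, $0$, and $\tfrac{1}{2}$ using the standard $M$-Whittaker bounds and notes that the Poincar\'e series has no principal part at the remaining cusps---but this is the same argument packaged by cusp rather than by which factor supplies the decay.
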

\begin{proof}
By the argument in Lemma 3.4 of \cite{BKM}, one may reduce the first statement to showing that 
$$
\lim_{T\to\infty}\int_0^1P_{k+\frac{1}{2},|D|,s}\left(u+iT\right)\Theta^*\left(-\overline{z},u+iT\right)du =0
$$
as well as vanishing of similar integrals around other cusps of $\Gamma_0(4N)$.  For the cusps equivalent to $\infty$, $0$, and $\frac{1}{2}$, the argument follows by the usual bounds for the $M$-Whittaker function (as shown in \cite{BKM}).  The Poincar\'e series has no principal part at the cusps which are inequivalent to $0$, $\frac{1}{2}$, and $\infty$ and hence the corresponding integrals vanish.  The argument for the second statement is similar.
\end{proof}

The following diagram summarizes Lemmas \ref{lem:xi} and \ref{lem:xireg}, up to constants:
\[
\xymatrix{
&\Theta_{k}(z,\tau)\ar@{->}[dd]_{\xi_{k+\frac{1}{2},\tau}}\ar@/_/[ddr]_<<<<<<{\frac{\partial}{\partial z}}\ar@/^/[dl]_{\substack{\left<\xi_{\frac{3}{2}-k}\left(P_{\frac{3}{2}-k,|D|,s}\right) ,\cdot\right>\\ \vphantom{a_{a_n}}}}&\Theta_{1-k}^*(z,\tau)\ar@/^/[ddl]^<<<<<<{\frac{\partial}{\partial z}}\ar@{->}[dd]^{\xi_{\frac{3}{2}-k,\tau}} \ar@/_/[dr]^{\substack{\left<\xi_{k+\frac{1}{2}}\left(P_{k+\frac{1}{2},|D|,s}\right) ,\cdot\right>\\ \vphantom{a_{a_n}}  }}&&\\
\Phi_{k}\!\left(\xi_{\frac{3}{2}-k}\!\left(P_{\frac{3}{2}-k,|D|,s}\right)\right)\hspace{-.25in}&&&\hspace{-0.25in}\Phi_{1-k}^*\!\left(\xi_{k+\frac{1}{2}}\!\left(P_{k+\frac{1}{2}},|D|,s\right)\right)\\
&y^{2-2k}\frac{\partial}{\partial z}\Theta_{1-k}^*(-\overline{z},\tau)\ar@/_/[ul]^{\substack{\vphantom{a}\\ \left<P_{\frac{3}{2}-k,|D|,s} ,\cdot\right>\hphantom{aa} }} & y^{2k}\frac{\partial}{\partial z} \Theta_{k}(z,\tau)\ar@/^/[ur]_{\substack{\vphantom{a}\\ \left<P_{k+\frac{1}{2},|D|,s} ,\cdot\right>}}&
}
\]

\subsection{Theta lifts and spectral parameters}
\label{sec:theta-lifts}

Define (for $\tau=u+iv$, $z=x+iy$)
\begin{equation}\label{eqn:Fdefslarge}
\mathcal{F}_{1-k, N, D_0,D,s}(z) := \sum_{\substack{Q\in \QQ_{D_0D}\\ N\mid a}} \chi_{D_0}(Q) \sgn\left(Q_{z}\right) Q(z,1)^{k-1}\varphi_s^*\left(\frac{D_0Dy^2}{\left|Q(z,1)\right|^2}\right),
\end{equation}
where
$$
\varphi_s^*(w):=\frac{\Gamma\left(s+\frac{1}{4}\right)(4\pi)^{\frac{1}{4}}}{12\sqrt{\pi} \Gamma(2s)} w^{s-\frac{1}{4}}{_2F_1}\left(s-\frac{1}{4}, s-\frac{1}{4};2s;w\right).
$$
\begin{proposition}\label{prop:local}
For $\re(s)>1$, the function $\mathcal{F}_{0,N,D_0,D,s}$ is a local Maass form of weight $0$ and eigenvalue $4\lambda_{\frac{1}{2},s} = \lambda_{0,2s-\frac{1}{2}}$ on $\Gamma_0(N)$, up to the condition at the cusps.
\end{proposition}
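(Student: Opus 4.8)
The plan is to verify directly the three interior conditions of Definition \ref{LocalMaassDefn} for $\kappa=0$ --- namely $\Gamma_0(N)$-invariance, the eigenfunction equation off the exceptional set, and the averaging property on it --- leaving aside the growth at the cusps, as the statement permits. The crucial simplification is that for $k=1$ the weight is $2-2k=0$ and the factor $Q(z,1)^{k-1}$ in \eqref{eqn:Fdefslarge} is trivial, so that, using the identity $|Q(z,1)|^2 = y^2\!\left(Q_z^2 + D_0D\right)$ (valid since every $Q$ in the sum has discriminant $D_0D$, as in the proof of Lemma \ref{lem:Thcusps}), the argument of $\varphi_s^*$ becomes
\[
\frac{D_0Dy^2}{|Q(z,1)|^2} = \frac{D_0D}{Q_z^2 + D_0D}.
\]
Hence every summand depends on $z$ only through the single real quantity $Q_z$: writing $g(t):=\sgn(t)\,\varphi_s^*\!\left(D_0D/(t^2+D_0D)\right)$, the $Q$-th term equals $\chi_{D_0}(Q)\,g(Q_z)$, and the exceptional set is $E_{D_0D}$, on which some $Q_z$ vanishes.

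For invariance (condition (1)) I would use that, as recorded in Section \ref{sec:theta-general}, $\Gamma_0(N)$ acts by isometries preserving the sublattice $M_N$, so that $Q\mapsto \gamma.Q$ permutes the forms of discriminant $D_0D$ with $N\mid a$; since $\chi_{D_0}$ is $\SL_2(\Z)$-invariant and $Q_{\gamma z}=(\gamma^{-1}.Q)_z$ (a consequence of $b_1(\gamma z)=\gamma.b_1(z)$ in \eqref{eq:Ztrans}), reindexing the sum gives $\mathcal{F}_{0,N,D_0,D,s}(\gamma z)=\mathcal{F}_{0,N,D_0,D,s}(z)$. Convergence for $\re(s)>1$, which legitimizes this reindexing and the termwise differentiation below, would follow from the asymptotic $\varphi_s^*(w)=O\!\left(w^{\re(s)-1/4}\right)$ as $w\to 0^+$ together with the standard count of quadratic forms with $|Q(z,1)|$ bounded, as in \cite{BKK,BKM}.

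The heart of the matter is the eigenvalue equation (condition (2)). Off $E_{D_0D}$ each $\sgn(Q_z)$ is locally constant, so it suffices to show that $\varphi_s^*\!\left(D_0D/(Q_z^2+D_0D)\right)$ is a $\Delta_0$-eigenfunction. A short computation gives the two invariants
\[
\Delta_0\,Q_z = -2\,Q_z, \qquad \left(\partial_x Q_z\right)^2 + \left(\partial_y Q_z\right)^2 = \frac{Q_z^2 + D_0D}{y^2},
\]
so that for any smooth $\Psi$ one has $\Delta_0\,\Psi(Q_z) = -(Q_z^2+D_0D)\Psi''(Q_z) - 2Q_z\,\Psi'(Q_z)$. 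Setting $t:=Q_z$, the equation $\Delta_0\Psi(Q_z)=\lambda_{0,2s-1/2}\Psi(Q_z)$ becomes the ODE $(t^2+D_0D)\Psi'' + 2t\Psi' + \lambda_{0,2s-1/2}\Psi=0$; substituting $w=D_0D/(t^2+D_0D)$ transforms this into the hypergeometric equation with parameters $\alpha=\beta=s-\tfrac14$ and $\gamma=2s$, whose solution regular at $w=0$ is $w^{s-1/4}\,{}_2F_1\!\left(s-\tfrac14,s-\tfrac14;2s;w\right)$, precisely the shape of $\varphi_s^*$. The indicial exponent $s-\tfrac14$ is consistent with $\lambda_{0,2s-1/2}=-4s^2+4s-\tfrac34$, confirming both the solution and the asserted eigenvalue $4\lambda_{\frac12,s}=\lambda_{0,2s-1/2}$. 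I expect this reduction to the hypergeometric ODE, together with justifying the termwise application of $\Delta_0$ via local uniform convergence of the differentiated series on compacta off $E_{D_0D}$, to be the main obstacle.

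Finally, for the averaging condition (3) I would note that $\varphi_s^*(1)$ is finite by Gauss's theorem, since $\gamma-\alpha-\beta=2s-2\!\left(s-\tfrac14\right)=\tfrac12>0$. At a point $z_0\in E_{D_0D}$ the finitely many forms whose geodesic passes through $z_0$ satisfy $Q_{z_0}=0$, so their terms vanish there; moving vertically to $z_0\pm ir$ crosses each such geodesic transversally when $a\neq0$ (as $\partial_y(a|z|^2+bx+c)=2ay\neq0$), so $\sgn(Q_{z_0\pm ir})=\pm1$ while $\varphi_s^*\to\varphi_s^*(1)$, and the two one-sided limits average to $0$, matching the value at $z_0$ (the degenerate case $a=0$ being trivial, as the corresponding term is identically $0$ along the vertical approach). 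All remaining terms are continuous across $z_0$ and so are unaffected by averaging. An alternative to this direct route would be to identify $\mathcal{F}_{0,N,D_0,D,s}$ with the theta lift $\Phi_{1-k}^*$ of a weight $\tfrac32-k$ Poincar\'e series and deduce the local Maass property from Lemmas \ref{lem:xi} and \ref{lem:xireg}, but the direct verification above is more self-contained.
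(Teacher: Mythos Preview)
Your proof is correct and takes a genuinely different route from the paper's. The paper does \emph{not} verify the three conditions of Definition~\ref{LocalMaassDefn} directly; instead it first identifies $\mathcal{F}_{0,N,D_0,D,s}$ (up to an explicit constant) with the theta lift $\Phi_0^*\!\left(P_{\frac{1}{2},|D|,s}\right)$ by unfolding the Poincar\'e series and evaluating the resulting integral in $v$ (this is \eqref{eqn:Phi1/2}). Modularity then comes for free from Proposition~\ref{prop:twisted-scalar}, and the eigenvalue equation is obtained by pushing $\Delta_{0}$ through the regularized pairing via Lemmas~\ref{lem:xi} and~\ref{lem:xireg}, reducing $\Delta_{0}\Phi_0^*(H)$ to $4\lambda_{\frac12,s}\Phi_0^*(H)$ because $P_{\frac12,|D|,s}$ is itself a weight-$\tfrac12$ eigenfunction. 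The averaging condition on $E_{D_0D}$ is not spelled out in the paper's argument.

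By contrast, you reduce everything to the single scalar variable $Q_z$, compute $\Delta_0$ on $\Psi(Q_z)$ via the two pleasant identities $\Delta_0 Q_z=-2Q_z$ and $(\partial_x Q_z)^2+(\partial_y Q_z)^2=(Q_z^2+D_0D)/y^2$, and land on the hypergeometric ODE whose solution with indicial exponent $s-\tfrac14$ is exactly $\varphi_s^*$. This is more elementary and, unlike the paper, you treat condition~(3) explicitly (the small slip ``$\sgn(Q_{z_0\pm ir})=\pm 1$'' should read $\pm\sgn(a)$, but the conclusion is unaffected). What the paper's approach buys is the identity \eqref{eqn:Phi1/2} itself: that realization of $\mathcal{F}_{0,N,D_0,D,s}$ as a theta lift is the mechanism by which Corollary~\ref{cor:FchiD} obtains the analytic continuation to $s=\tfrac34$ and by which \eqref{eqn:DPhi*}--\eqref{eqn:xiPhi*} relate $\mathcal{F}$ to $\f_{1,N,D_0,D}$. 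Your argument proves the proposition cleanly but would still need \eqref{eqn:Phi1/2} to be established separately for the sequel.
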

\begin{remark}
Following an argument in \cite{BKK}, one could show that the functions $\mathcal{F}_{0,N,D_0,D,s}$ satisfy the necessary condition at the cusps, but we only need this for one special eigenvalue, so we do not work this out here.
\end{remark}
\begin{proof}[Proof of Proposition \ref{prop:local}]
We compute $\Phi_{1-k}^*\left(P_{\frac{3}{2}-k,|D|,s}\right)$ as in \cite{BKM}, where $P_{\frac{3}{2}-k,|D|,s}$ was defined in \eqref{eqn:Poincdef}.  Unfolding the Poincar\'e series and following the proof of Theorem 1.3 (2) in \cite{BKM}, we obtain 
\begin{multline*}
\Phi_{1-k}^*\left(P_{\frac{3}{2}-k,|D|,s}\right)(z) =\frac{1}{\left[\SL_2(\Z):\Gamma_0(4N)\right]}\left(4\pi |D|\right)^{\frac{1}{4}-\frac{k}{2}}\Gamma(2s)^{-1}\\
\times \sum_{\substack{Q\in \QQ_{DD_0}\\ N\mid a}} \chi_{D_0}(Q)Q_zQ(z,1)^{k-1}\mathcal{I}\left(\frac{D_0Dy^2}{\left|Q\left(z,1\right)\right|^2}\right),
\end{multline*}
where 
$$
\mathcal{I}(w):=\int_{0}^{\infty} \mathcal{M}_{\frac{3}{2}-k,s}\left(-v\right)e^{\frac{v}{2}}v^{-\frac{1}{2}}e^{-\frac{v}{w}}dv.
$$
In the proof of Theorem 1.3 (2) of \cite{BKM}, this integral was computed to be
$$
\mathcal{I}(w)=\Gamma\left(s+\frac{k}{2}-\frac14\right)\left(1-w\right)^{-\frac{1}{2}}\left(w\right)^{s+\frac{k}{2}-\frac{1}{4}}{_2F_1}\left(s-\frac{k}{2}+\frac{1}{4},s+\frac{k}{2}-\frac{3}{4};2s;w\right).
$$
We now plug in $w=\frac{D_0Dy^2}{\left|Q(z,1)\right|^2}$, use 
\begin{equation}\label{eqn:QQz}
\left|Q(z,1)\right|^2=\left|D_0\right|D y^2+Q_z^2y^2.
\end{equation}
 to rewrite 
$$
\left(1-\frac{D_0D y^2}{\left|Q(z,1)\right|^2}\right)^{-\frac{1}{2}}=\frac{\sqrt{D_0 D}}{\left|Q_{z}\right|},
$$
and plug in $k=1$ to yield
\begin{equation}\label{eqn:Phi1/2}
\Phi_{0}^*\left(P_{\frac{1}{2},|D|,s}\right)(z) = \frac{\left(D_0D\right)^{\frac{1}{2}}}{\left[\SL_2(\Z):\Gamma_0(4N)\right]\left(4\pi\left|D\right|\right)^{\frac{1}{4}}\Gamma(2s)} \sum_{\substack{Q\in \QQ_{DD_0}\\ N\mid a}} \chi_{D_0}(Q)\sgn\left(Q_z\right)\varphi_{s}^*\left(\frac{D_0Dy^2}{\left|Q(z,1)\right|^2}\right).
\end{equation}
The claim is hence equivalent to showing that for $H=P_{\frac{3}{2}-k,|D|,s}$, the function $\Phi_0^*(H)$ is a local Maass form with eigenvalue $4\lambda_{\frac{3}{2}-k,s}=\lambda_{2-2k,2s-\frac{1}{2}}$.

However, by Lemma \ref{lem:xi} and Lemma \ref{lem:xireg} together with the fact that for $\kappa\in\frac{1}{2}\Z$
$$
\Delta_{\kappa} = -\xi_{2-\kappa}\xi_{\kappa},
$$
we have that for $z\notin E_D$
\begin{multline}\label{eqn:DeltaH}
\Delta_{2-2k}\left(\Phi_{1-k}^*(H)\right) = -\xi_{2k}\xi_{2-2k}\left(\left<H,\Theta_{1-k}^*\left(-\overline{z},\cdot\right)\right>^{\operatorname{reg}}\right)=2\xi_{2k}\left(\left<H,iy^{2-2k}\frac{\partial}{\partial z}\left(\Theta_{1-k}^*\left(-\overline{z},\cdot\right)\right)\right>^{\operatorname{reg}}\right)\\
=-2\xi_{2k}\left(\left<H,\xi_{k+\frac{1}{2}}\left(\Theta_{k}\left(z,\cdot\right)\right)\right>^{\operatorname{reg}}\right)=-4\left<\xi_{\frac{3}{2}-k}(H),iy^{2k}\frac{\partial}{\partial z}\left(\Theta_{k}\left(z,\cdot\right)\right)\right>^{\operatorname{reg}}\\
=4\left<\Delta_{k+\frac{1}{2}}(H),\Theta_{1-k}^*\left(-\overline{z},\cdot\right)\right>^{\operatorname{reg}}=4\lambda_{\frac{3}{2}-k,s}\Phi_{1-k}^*(H).
\end{multline}

\end{proof}

We are mainly interested in the analytic continuation $\mathcal{F}_{0,N,D_0,D}:=\left[\mathcal{F}_{0,N,D_0,D,s}\right]_{s=\frac{3}{4}}$ to $s=\frac{3}{4}$.  
\begin{corollary}\label{cor:FchiD}
The function $\mathcal{F}_{0,N,D_0,D}$ is a locally harmonic Maass form of weight $0$ on $\Gamma_0(N)$.
\end{corollary}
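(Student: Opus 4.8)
The plan is to deduce the corollary from Proposition \ref{prop:local} by specializing the spectral parameter to $s=\frac34$, where the eigenvalue degenerates to $0$. The starting point is the identity \eqref{eqn:Phi1/2}, which (after setting $k=1$, so that $Q(z,1)^{k-1}\equiv 1$) exhibits $\mathcal{F}_{0,N,D_0,D,s}$ as an explicit scalar multiple of the theta lift $\Phi_0^*\!\left(P_{\frac12,|D|,s}\right)$, namely $\mathcal{F}_{0,N,D_0,D,s}=c(s)\,\Phi_0^*\!\left(P_{\frac12,|D|,s}\right)$ with $c(s)=\frac{[\SL_2(\Z):\Gamma_0(4N)](4\pi|D|)^{1/4}\Gamma(2s)}{(D_0D)^{1/2}}$. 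First I would record that the Poincar\'e series $P_{\frac12,|D|,s}$ has an analytic continuation to $s=\frac34$, at which value it becomes the harmonic weak Maass form $P_{\frac12,|D|}=P_{\frac12,|D|,\frac34}$ recalled in the Poincar\'e series subsection. Since $c(s)$ is holomorphic and nonvanishing near $s=\frac34$ (as $\Gamma(\tfrac32)\neq 0$), the function $\mathcal{F}_{0,N,D_0,D,s}$ inherits an analytic continuation, and the specialization $\mathcal{F}_{0,N,D_0,D}=[\mathcal{F}_{0,N,D_0,D,s}]_{s=\frac34}$ equals $c(\tfrac34)\,\Phi_0^*\!\left(P_{\frac12,|D|}\right)$, a genuine theta lift of a harmonic weak Maass form.

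Next I would verify that the defining conditions (1)--(3) of Definition \ref{LocalMaassDefn} survive the continuation with eigenvalue $0$. A direct computation gives $4\lambda_{\frac12,\frac34}=4\left(\tfrac34-\tfrac14\right)\left(\tfrac34-\tfrac34\right)=0$, equivalently $\lambda_{0,2\cdot\frac34-\frac12}=\lambda_{0,1}=0$. The weight-$0$ modularity for $\Gamma_0(N)$ (condition (1)) and the averaging relation (condition (3)) are identities in $z$ valid on the half-plane $\re(s)>1$ of absolute convergence, so they persist under analytic continuation. The eigenvalue equation $\Delta_0\mathcal{F}_{0,N,D_0,D,s}=4\lambda_{\frac12,s}\mathcal{F}_{0,N,D_0,D,s}$ from Proposition \ref{prop:local} likewise continues, and at $s=\frac34$ its right-hand side vanishes; hence $\Delta_0\mathcal{F}_{0,N,D_0,D}=0$ on each connected component of $\H\setminus E_{D_0D}$, so $\mathcal{F}_{0,N,D_0,D}$ is locally harmonic there.

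The hard part will be the growth condition at the cusps (condition (4)), which is exactly the point left unaddressed in the remark following Proposition \ref{prop:local}. Here I would use that, at the special parameter $s=\frac34$, we are lifting the honest harmonic weak Maass form $P_{\frac12,|D|}$, whose principal part consists of finitely many terms supported at the cusps equivalent to $\infty$, $0$, and $\frac12$. Combining the cusp expansions of the theta kernel in Lemma \ref{lem:Thcusps} with the unfolding computation of the regularized integral as carried out in \cite{BKK,BKM}, one extracts the contribution of each principal-part term and shows the remaining integral contributes at most polynomial growth as $z$ approaches a cusp; in fact the leading cuspidal behaviour is governed by the locally constant function $F_{0,N,D,D_0}$. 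The genuinely delicate technical issue is analytic: one must control the continuation to $s=\frac34$ uniformly enough near each cusp to interchange the cuspidal limit with the analytic continuation in $s$, since the defining series converges only for $\re(s)>1$. Once this uniformity is in place, conditions (1)--(4) all hold and $\mathcal{F}_{0,N,D_0,D}$ is a locally harmonic Maass form of weight $0$ on $\Gamma_0(N)$.
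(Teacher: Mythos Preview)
Your treatment of conditions (1)--(3) is essentially identical to the paper's: analytic continuation of $P_{\frac12,|D|,s}$ to $s=\tfrac34$, the identification \eqref{eqn:Phi1/2}, and the vanishing of the eigenvalue $4\lambda_{\frac12,\frac34}=0$.

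For condition (4), however, the paper takes a different and more structural route than the direct asymptotic/unfolding analysis you sketch. Rather than attempting to control the analytic continuation uniformly as $z$ approaches a cusp (the ``genuinely delicate technical issue'' you flag but do not resolve), the paper works at each cusp $\rho=M\infty$ with the slashed function $\mathcal{F}_{0,N,D_0,D,M}:=\mathcal{F}_{0,N,D_0,D}|_0 M$ and introduces the companion lift $\f_{1,N,D_0,D}:=[\SL_2(\Z):\Gamma_0(4N)]\,\Phi_1(P_{\frac32,|D|})$. Using Lemmas \ref{lem:xi} and \ref{lem:xireg} it establishes the operator identities
\[
\mathcal{D}\bigl(\Phi_0^*(H)\bigr)=-\tfrac{1}{2\pi}\,\Phi_1\bigl(\xi_{\frac12}(H)\bigr),
\qquad
\xi_0\bigl(\Phi_0^*(H)\bigr)=2\,\Phi_1\bigl(\xi_{\frac12}(H)\bigr),
\]
so that, writing $\xi_{\frac12}(P_{\frac12,|D|})=c\,P_{\frac32,|D|}$, the difference
\[
\mathcal{F}_{0,N,D_0,D,M}-2c\Bigl(\f_{1,N,D_0,D,M}^{\,*}-\tfrac{1}{4\pi}\,\mathcal{E}_{\f_{1,N,D_0,D,M}}\Bigr)
\]
is annihilated by both $\mathcal{D}$ and $\xi_0$, hence is locally constant. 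Since the Eichler integrals $f^*$ and $\mathcal{E}_f$ of a weight-$2$ modular form have at most polynomial growth towards $i\infty$ (decaying for the cuspidal part, polynomial for the Eisenstein part), $\mathcal{F}_{0,N,D_0,D,M}$ inherits at most polynomial growth. This argument completely sidesteps the uniformity issue you raised: no interchange of the $s\to\tfrac34$ limit with the cuspidal limit is needed, because the growth is read off from the Eichler-integral decomposition \eqref{decomplocalmaass} directly at $s=\tfrac34$. Your approach may well be pushed through, but as written it leaves precisely the crucial step open; the paper's method buys you the cusp condition essentially for free once the differential relations between the two theta lifts are in hand.
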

\begin{proof}
Recall that the Poincar\'e series $P_{\frac{1}{2},|D|,s}$ have an analytic continuation to $s=\frac{3}{4}$, which we have denoted $P_{\frac{1}{2},|D|}$ and which is harmonic.  Since the theta lift $\Phi_0^*\left(P_{\frac{1}{2},|D|}\right)$ exists, we may analytically continue $\mathcal{F}_{0,N,D_0,D,s}$ to $s=\frac{3}{4}$ by \eqref{eqn:Phi1/2}.  Moreover, since the eigenvalue of $\Phi_0^*\left(P_{\frac{1}{2},|D|,s}\right)$ is $\lambda_{0,2s-\frac{1}{2}}=4\lambda_{\frac{1}{2},s}$ and $\lambda_{\frac{1}{2},\frac{3}{4}}=0$, we obtain that $\mathcal{F}_{0,N,D_0,D}$ is a locally harmonic Maass form of weight $0$.

It remains to show the growth condition at all cusps. In order to show that $\mathcal{F}_{0,N,D_0,D}$ is bounded at each cusp, we use \eqref{eqn:Th*cusp} and pull the operator inside the theta lift and then using the same argument as that used to obtain \eqref{eqn:Phi1/2} yields for $H=P_{\frac{3}{2}-k,|D|,s}$ that 
\begin{multline*}
\mathcal{F}_{0,N,D_0,D,M}(z):=\mathcal{F}_{0,N,D_0,D}\Big|_{0}M(z)= \Phi_{0}^*(H)\Big|_{2-2k}M(z) \\
= \sum_{\substack{Q=[a,b,c]\in \QQ_{DD_0}\\ Q\circ M^{-1}=\left[\alpha,\beta,\gamma\right]\\ N\mid \alpha}} \chi_{D_0}(Q)\sgn(Q_z) \varphi_{\frac{3}{4}}^*\left(\frac{D_0Dy^2}{\left|Q(z,1)\right|^2}\right).
\end{multline*}
Similarly, for the holomorphic modular form $h=P_{k+\frac{1}{2},|D|}$, we define
\[
\Phi_{k}(h):=\left<h,\Theta_{k}\left(z,\cdot\right)\right>.
\]
and for a discriminant $D=-m$, we then define
\begin{equation}\label{eqn:fchiDdef}
\f_{1,N,D_0,D}:=\left[\SL_2(\Z):\Gamma_0(4N)\right]\Phi_1\left(P_{\frac{3}{2},|D|}\right).
\end{equation}
Note that the exponential decay of the cusp form $P_{\frac{3}{2},|D|}$ and the polynomial growth of $\Theta_1$ in the integration variable implies that regularization is unnecessary for the cusp form $H=P_{\frac{3}{2},|D|}$.  We next compute
\begin{multline*}
\f_{1,N,D_0,D,M}(z):=\f_{1,N,D_0,D}\Big|_{2} M(z)= \Phi_{1}(h)\Big|_{2k} M(z)\\
 = \sum_{\substack{Q=[a,b,c]\in \QQ_{DD_0}\\ Q\circ M^{-1}=\left[\alpha,\beta,\gamma\right]\\ N\mid \alpha}} \chi_{D_0}(Q)\sgn(Q_z) \varphi_{\frac{3}{4}}^*\left(\frac{D_0Dy^2}{\left|Q(z,1)\right|^2}\right).
\end{multline*}

By \eqref{eqn:xiTh} and Lemma \ref{lem:xireg}, for a weight $\frac{1}{2}$ harmonic weak Maass form, we have 
\[
i\frac{\partial}{\partial z}\Phi_{0}^*\left(H\right)= \left<H,-i\frac{\partial}{\partial z}\Theta_{0}\left(-\overline{z},\cdot\right)\right>=\left<H,\xi_{\frac{3}{2}}\left(\Theta_{1}\left(z,\cdot\right)\right)\right>=\left<\xi_{\frac{1}{2}}\left(H\right), \Theta_{1}\left(z,\cdot\right)\right>= \Phi_{1}\left( \xi_{\frac{1}{2}}\left(H\right)\right).
\]
Hence for every weight $\frac{1}{2}$ harmonic weak Maass form $H$, we have
\begin{equation}\label{eqn:DPhi*}
\mathcal{D}\left(\Phi_0^*(H)\right) = -\frac{1}{2\pi}\Phi_1\left(\xi_{\frac{1}{2}}\left(H\right)\right).
\end{equation}
We have also seen that (see the second and third identities of \eqref{eqn:DeltaH} together with Lemma \ref{lem:xireg}) 
\begin{equation}\label{eqn:xiPhi*}
\xi_{0}\left(\Phi_{0}^*(H)\right) = 2\Phi_{1}\left(\xi_{\frac{1}{2}}(H)\right).
\end{equation}
Since $\xi_{\frac{1}{2}}(H)= ch$ for some constant $c\in\R$ and slashing commutes with the differential operators, we see that 
\[
\mathcal{F}_{0,N,D_0,D,M}-2c \left( f_{1,D_0,D,M}^* -\frac{1}{4\pi} \mathcal{E}_{f_{1,D_0,D,M}}\right) 
\]
is annihilated by both $\mathcal{D}$ and $\xi_{0}$, where we recall the Eichler integrals $f^*$ and $\mathcal{E}_f$ for cusp forms $f$ defined in \eqref{eqn:Eichnonhol} and \eqref{eqn:Eichhol}, respectively and extend their definition via Fourier expansions for Eisenstein series. Thus we see that the above difference is both holomorphic and anti-holomorphic, and hence a local constant. The functions $ f_{1,D_0,D,M}^*$ and $\mathcal{E}_{f_{1,D_0,D,M}}$ both grow at most polynomially towards the cusps because the Eichler integrals corresponding to the cuspidal component vanish at the cusps while the harmonic Eisenstein series component may have polynomial growth. Hence $\mathcal{F}_{0,N,D_0,D}$ grows at most polynomially towards the cusps, yielding the claim.

\end{proof}

\subsection{Connection between the two analytic continuations}

It turns out that the function $\f_{1,N,D_0,D}$ defined in \eqref{eqn:fchiDdef} essentially equals Kohnen's $f_{1,N,D,D_0}$.
\begin{lemma}\label{lem:f1fchi}
We have 
$$
f_{1,N,D,D_0}=\frac{6\left(4\pi\right)^{\frac{1}{4}}\Gamma\left(\frac{3}{2}\right)}{|D|^{\frac{3}{4}}}\f_{1,N,D_0,D}.
$$
\end{lemma}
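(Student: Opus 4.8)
The plan is to identify both $f_{1,N,D,D_0}$ and $\f_{1,N,D_0,D}$ as the same holomorphic weight-$2$ form on $\Gamma_0(N)$ up to the claimed scalar, and to pin that scalar down from a single Fourier coefficient. That $f_{1,N,D,D_0}\in M_2(N)$ is Kohnen's Proposition 2 of \cite{KohnenCoeff}. For the lift, I would first record that $\f_{1,N,D_0,D}$ is holomorphic of weight $2$: by \eqref{eqn:DPhi*} we have $\mathcal{D}(\mathcal{F}_{0,N,D_0,D})=-\frac{1}{2\pi}\Phi_1(\xi_{\frac12}(P_{\frac12,|D|}))$, and $\xi_{\frac12}(P_{\frac12,|D|})$ is a nonzero multiple of the cuspidal Poincar\'e series $P_{\frac32,|D|}$; since $\mathcal{D}=\frac{1}{2\pi i}\frac{\partial}{\partial z}$ annihilates both the locally constant part and the $f^*$-component of the decomposition of $\mathcal{F}_{0,N,D_0,D}$ obtained in the proof of Corollary \ref{cor:FchiD}, the function $\f_{1,N,D_0,D}$ is a holomorphic element of $M_2(N)$ (non-cuspidal precisely because of the harmonic Eisenstein contribution, which matches the fact that $f_{1,N,D,D_0}$ need not be a cusp form). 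As $M_2(N)$ is finite dimensional, it then suffices to compare the two expansions at the cusp $\infty$ in the common weight-$2$ variable $z$.

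Next I would compute the $z$-expansion of $\f_{1,N,D_0,D}=[\SL_2(\Z):\Gamma_0(4N)]\langle P_{\frac32,|D|},\Theta_1(z,\cdot)\rangle$ by the Rankin--Selberg unfolding of \cite{BKM}. Because $P_{\frac32,|D|}$ is a genuine cusp form, no regularization is needed; unfolding replaces the inner product by an integral of the seed $\psi_{|D|,\frac32}$ against $\overline{\Theta_1(z,\cdot)}$ over $\Gamma_\infty\backslash\H$. The $u$-integral isolates the index-$|D|$ Fourier coefficient of $\Theta_1(z,\cdot)$, which by the expansion recorded in the excerpt is a Gaussian-weighted sum $\sum_{Q\in\QQ_{DD_0},\,N\mid a}\chi_{D_0}(Q)\overline{Q(z,1)}\,e^{-4\pi Q_z^2 v/|D_0|}$ (the conjugate entering through the definition of $\Phi_1$), and the remaining $v$-integral is exactly the Gamma-type integral $\mathcal{I}$ evaluated in the proof of Proposition \ref{prop:local}. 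Using \eqref{eqn:QQz} to convert the resulting $\overline{Q(z,1)}$- and $Q_z$-dependence (the $v$-integral producing $|Q(z,1)|^{-2}$-type behaviour, so that $\overline{Q(z,1)}\,|Q(z,1)|^{-2}=Q(z,1)^{-1}$), this exhibits $\f_{1,N,D_0,D}$ as a spectrally regularized incarnation of $\sum_{Q\in\QQ_{DD_0},\,N\mid a}\chi_{D_0}(Q)Q(z,1)^{-1}$, i.e. of exactly the series underlying Kohnen's $f_{1,N,D,D_0}$.

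The hard part will be reconciling this with Kohnen's \emph{Hecke-trick} regularization $f_{1,N,D,D_0}(z)=\big[\sum_{Q}\chi_{D_0}(Q)Q(z,1)^{-1}|Q(z,1)|^{-s}\big]_{s=0}$. As stressed in the introduction, the spectral $s$-family (continued to $s=\frac34$) and the Hecke $s$-family (continued to $s=0$) do \emph{not} agree away from their respective special points, and term-by-term both series diverge, so one cannot equate them before continuation; the genuine content is that the two regularizations land on the same holomorphic form rather than on two elements of $M_2(N)$ differing by a cusp form. I would settle this by comparing the two analytic continuations through their Fourier expansions: Kohnen's expansion is computed in the proof of Proposition 2 of \cite{KohnenCoeff}, while the lift's expansion follows from the unfolding above together with the evaluation of $\mathcal{I}$ at $s=\frac34$ (equivalently the ${}_2F_1$-evaluation producing $\varphi^*_{3/4}$). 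Checking that these agree coefficient-by-coefficient reduces to a Sali\'e/Gauss-sum identity for the index-$|D|$ coefficient combined with the hypergeometric evaluation at the spectral point; this forces equality up to a constant, and the constant $\frac{6(4\pi)^{1/4}\Gamma(\frac32)}{|D|^{3/4}}$ is then read off by tracking the prefactors $(4\pi|D|)^{1/4-k/2}$ and the $\Gamma$-factors through \eqref{eqn:Phi1/2} and testing on one nonvanishing coefficient. Establishing that the mismatched $s$-families nonetheless coincide at their special points is the crux of the argument.
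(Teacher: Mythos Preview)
Your unfolding step is fine and lands you where the paper also lands: for $\re(s)>\tfrac34$,
\[
\f_{1,N,D_0,D,s}(z)=\sum_{\substack{Q\in\QQ_{D_0D}\\ N\mid a}}\frac{\chi_{D_0}(Q)}{Q(z,1)}\,\varphi_s\!\left(\frac{D_0Dy^2}{|Q(z,1)|^2}\right),
\]
with an explicit hypergeometric $\varphi_s$. Where your plan diverges from the paper is in the next step. You propose to compute the $z$-Fourier expansion of the lift and compare it coefficient-by-coefficient with Kohnen's expansion, invoking Sali\'e/Gauss-sum identities. That route is in principle workable but you have not actually carried it out, and it duplicates the hardest part of Kohnen's Proposition~2 (the Hecke-trick continuation) on the spectral side; the phrase ``forces equality up to a constant, then test on one coefficient'' is also internally inconsistent, since a full coefficient comparison already fixes the constant.

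The paper bypasses Fourier expansions entirely and compares the two $s$-families directly at the level of the quadratic-form sums. Writing
\[
\alpha_t(w):=w^{\frac34-t}\frac{\Gamma(2t)}{\Gamma(t+\tfrac14)}\varphi_t(w),
\]
one has $\alpha_{3/4}(w)=\tfrac{|D|^{3/4}}{6(4\pi)^{1/4}}$ independent of $w$ (because ${}_2F_1(1,0;\tfrac32;w)=1$), so the Hecke-trick family $f_{1,N,D,D_0,s}$ and the spectral family $\f_{1,N,D_0,D,s}$ can both be written with the common factor $Q(z,1)^{-1}|Q(z,1)|^{-(2s-3/2)}$ times $\alpha_{3/4}$ and $\alpha_s$ respectively. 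Their difference is then a single sum over $Q$ weighted by $\alpha_s(w)-\alpha_{3/4}(w)$ with $w=D_0Dy^2/|Q(z,1)|^2$. The point is that the power series of ${}_2F_1$ gives $\alpha_s(w)-\alpha_{3/4}(w)=O(w)$, which buys an extra $|Q(z,1)|^{-2}$ in each term; the difference therefore converges absolutely already for $\re(s)>-\tfrac54$, so its analytic continuation to $s=\tfrac34$ is obtained by simply substituting $s=\tfrac34$, where it visibly vanishes. The claimed constant $\tfrac{6(4\pi)^{1/4}\Gamma(3/2)}{|D|^{3/4}}$ drops out of $\alpha_{3/4}$ and the $\Gamma$-prefactors without any appeal to a test coefficient. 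This ``subtract the constant hypergeometric value to improve convergence'' trick is the idea your proposal is missing; it replaces the entire Fourier-coefficient comparison by a two-line convergence argument.
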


\begin{proof}
For $\re(s)>\frac{3}{4}$, we first define 
$$
\f_{1,N,D_0,D,s}(z) := \left[\Gamma_0(4):\Gamma_0(4N)\right] \Phi_{1}\left(P_{\frac{3}{2},|D|,s}\right),
$$
where 
$$
\Phi_{k}(H):=\left<H,\Theta_{k}\left(z,\cdot\right)\right>.
$$
Recalling that $P_{\frac{3}{2},|D|} = \left[P_{\frac{3}{2},|D|,s}\right]_{s=\frac{3}{4}}$, we have 
$$
\f_{1,N,D_0,D}= \left[\f_{1,N,D_0,D,s}\right]_{s=\frac{3}{4}}.
$$
Following the proof of Proposition \ref{prop:local}, for $\re(s)>\frac{3}{4}$ we obtain
$$
\f_{1,N,D_0,D,s}(z) =\sum_{\substack{Q\in \QQ_{D_0D}\\ N\mid a}} \frac{\chi_{D_0}(Q)}{Q(z,1)} \varphi_s\left(\frac{D_0Dy^2}{\left|Q(z,1)\right|^2}\right),
$$
where 
$$
\varphi_s(w):=\frac{\Gamma\left(s+\frac{1}{4}\right)|D|^{\frac{3}{4}}}{6\left(4\pi\right)^{\frac{1}{4}}\Gamma(2s)} w^{s-\frac{3}{4}}{_2F_1}\left(s+\frac{1}{4}, s-\frac{3}{4};2s;w\right).
$$
For $\re(s)>\frac{3}{4}$, we furthermore define 
$$
f_{1,N,D,D_0,s}(z):=\sum_{\substack{Q\in \QQ_{D_0D}\\ N\mid a}} \frac{\chi_{D_0}(Q)}{Q(z,1)\left|Q(z,1)\right|^{2s-\frac{3}{2}}}.
$$
Kohnen \cite{KohnenCoeff} used Hecke's trick to show that the analytic continuation of $f_{1,N,D,D_0,s}$ to $s=\frac{3}{4}$ exists and equals $f_{1,N,D,D_0}$.  We now set
$$
\alpha_t(w):=w^{\frac{3}{4}-t} \frac{\Gamma(2t)}{\Gamma\left(t+\frac{1}{4}\right)} \varphi_t(w).
$$
Since ${_2F_1}\left(1,0;\frac{3}{2};w\right)=1$, we see that 
$$
\alpha_{\frac{3}{4}}(w) = \frac{|D|^{\frac{3}{4}}}{6\left(4\pi\right)^{\frac{1}{4}}}
$$
is independent of $w$.  Hence we have
\begin{multline}\label{eqn:f1fchi1}
\frac{|D|^{\frac{3}{4}}}{6\left(4\pi\right)^{\frac{1}{4}}\Gamma\left(\frac{3}{2}\right)}f_{1,N,D,D_0}(z)\\
=\Bigg[\frac{\Gamma\left(s+\frac{1}{4}\right)}{\left(D_0Dy^2\right)^{\frac{3}{2}-2s}\Gamma(2s)}\sum_{\substack{Q\in \QQ_{D_0D}\\ N\mid a}} \frac{\chi_{D_0}(Q)}{Q(z,1)\left|Q(z,1)\right|^{2s-\frac{3}{2}}}\alpha_{\frac{3}{4}}\left(\frac{D_0Dy^2}{\left|Q(z,1)\right|^2}\right)\Bigg]_{s=\frac{3}{4}}.
\end{multline}
Therefore, we conclude that
\begin{multline}\label{eqn:diff}
\f_{1,N,D_0,D}(z) - \frac{|D|^{\frac{3}{4}}}{6\left(4\pi\right)^{\frac{1}{4}}\Gamma\left(\frac{3}{2}\right)}f_{1,N,D,D_0}(z)\\
= \Bigg[\frac{\Gamma\left(s+\frac{1}{4}\right)}{\left(D_0Dy^2\right)^{\frac{3}{2}-2s}\Gamma(2s)}\sum_{\substack{Q\in \QQ_{D_0D}\\ N\mid a}} \frac{\chi_{D_0}(Q)}{Q(z,1)\left|Q(z,1)\right|^{2s-\frac{3}{2}}}\left(\alpha_s\left(\frac{D_0Dy^2}{\left|Q(z,1)\right|^2}\right)-\alpha_{\frac{3}{4}}\left(\frac{D_0Dy^2}{\left|Q(z,1)\right|^2}\right)\right)\Bigg]_{s=\frac{3}{4}}\!\!\!.
\end{multline}
We next show that the right-hand side of \eqref{eqn:diff} converges absolutely for $\re(s)>-\frac{5}{4}$.  Hence its analytic continuation to $s=\frac{3}{4}$ is simply its value with $s=\frac{3}{4}$ plugged in, which is clearly zero.

Recall that by the power series expansion for ${_2F_1}\left(s+\frac{1}{4}, s-\frac{3}{4};2s;w\right)$, for $|w|<1$ we have 
$$
\alpha_s(w) - \alpha_{\frac{3}{4}}(w)= \frac{|D|^{\frac{3}{4}}}{6\left(4\pi\right)^{\frac{1}{4}}}\left(\frac{\left(s+\frac{1}{4}\right)\left(s-\frac{3}{4}\right)}{2s}\right) w + O\left(w^2\right).
$$
Using \eqref{eqn:QQz}, we have $|w|\leq 1$, with $|w|=1$ if and only if $Q_z=0$.  However, by Lemma 5.1 (1) of \cite{BKK}, for each $z\in \H$, there are only finitely many $Q\in \mathcal{Q}_{D_0D}$ for which $Q_z=0$.  For these finitely many $Q$, we may directly plug in $s=\frac{3}{4}$ to see that these terms vanish on the right-hand side of \eqref{eqn:diff}.  We hence conclude that the sum on the right-hand side of \eqref{eqn:diff} converges absolutely for $\re(s)>-\frac{5}{4}$, which concludes the proof.
\end{proof}

\section{Proofs of Theorem \ref{mainthm} and Corollaries \ref{maincor}, \ref{cor:SDodd}, and \ref{cor:sumtwocubes}}\label{sec:mainproofs}
Here we tie together the results of the previous sections to complete the proofs of Theorem \ref{mainthm} and Corollaries \ref{maincor}, \ref{cor:SDodd}, and \ref{cor:sumtwocubes}.

\subsection{Computation of Local Polynomials}
The basic idea of Theorem \ref{mainthm} is to relate the (logarithmic) singularities of $\mathcal{F}_{0,N,D_0,D}$ along geodesics with the quantities $F_{0,N,D,D_0}(x)$. This ties together the invariance of $\mathcal{F}_{0,N,D_0,D}$ with that of $F_{0,N,D,D_0}$, but only ``part of'' $\mathcal{F}_{0,N,D_0,D}$ (a locally constant function) contributes to the sum $F_{0,N,D,D_0}$ defined in \eqref{eqn:F1}. It is hence of interest to compute the locally constant function which gives these singularities. In general, for $k\geq 1$, the function exhibiting the singularity is a local polynomial. We thus define 
\begin{multline}\label{eqn:Pdef}
\mathcal{P}_{D_0,D,s}(z):=-\sum_{\substack{ t> 0\\  \exists Q\in \QQ_{D_0D}\\ Q_{z+it}=0>a\\ N\mid a}}\lim_{\varepsilon\to 0^+}\left(\mathcal{F}_{0,N,D_0,D,s}\left(z+it+i\varepsilon\right)-\mathcal{F}_{0,N,D_0,D,s}\left(z+it-i\varepsilon\right)\right)\\
-\lim_{\varepsilon\to 0^+}\left(\mathcal{F}_{0,N, D_0,D,s}\left(z+i\varepsilon\right)-\mathcal{F}_{0,N, D_0,D,s}\left(z\right)\right)
\end{multline}
and 
$$
\mathcal{G}_{D_0,D,s}:=\mathcal{F}_{0,N, D_0,D,s}-\mathcal{P}_{D_0,D,s}.
$$
Here we suppress the dependence on $N$ and the weight in the definitions for ease of notation in the following calculations. The proof below follows the argument in Theorem 7.1 of \cite{BKK}, but a more direct approach via theta lifts and written in terms of wall-crossing behvior across Weyl chambers may be found in Satz 3.15 of \cite{Hoevel}. 
\begin{lemma}\label{lem:Gcont}
If $DD_0$ is not a square, then the function $\mathcal{G}_{0,D_0,D,s}$ is continuous.  
\end{lemma}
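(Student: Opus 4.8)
The plan is to adapt the argument of Theorem 7.1 of \cite{BKK} to the present twisted, higher-level situation, exploiting the fact that $\mathcal{F}_{0,N,D_0,D,s}$ is assembled from the single jump-producing factor $\sgn(Q_z)$. Throughout I would work in the range $\re(s)>1$, where the series \eqref{eqn:Fdefslarge} converges absolutely and locally uniformly, so that the analysis may be carried out term by term. Write $g(p^{\pm}):=\lim_{\varepsilon\to0^+}g(p\pm i\varepsilon)$ for one-sided limits in the vertical direction.

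\emph{Step 1 (isolating the discontinuities).} First I would check that away from the geodesics $E_{D_0 D}$ every summand of \eqref{eqn:Fdefslarge} is real-analytic, and that the only source of discontinuity is the factor $\sgn(Q_z)$. The key point is that the radial profile $\varphi_s^*(w)$ extends continuously to $w=1$: its hypergeometric factor ${}_2F_1(s-\tfrac14,s-\tfrac14;2s;w)$ has $c-a-b=\tfrac12>0$, hence converges at $w=1$ to a finite nonzero limit, while by \eqref{eqn:QQz} the argument $w=D_0Dy^2/\lvert Q(z,1)\rvert^2=D_0D/(Q_z^2+D_0D)$ ranges over $(0,1]$, attaining $1$ exactly when $Q_z=0$. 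Since only finitely many forms vanish at a given point (cf. Lemma 5.1(1) of \cite{BKK}) and the uniformly convergent tail is continuous, the discontinuity of $\mathcal{F}_{0,N,D_0,D,s}$ at a point of $E_Q$ comes solely from the sign flip of $\sgn(Q_z)$, with no contribution from the continuous variation of $\varphi_s^*$.

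\emph{Step 2 (geometry of the geodesics).} This is where the hypothesis that $DD_0$ is not a square enters decisively: a form $[a,b,c]\in\QQ_{D_0D}$ has $a=0$ only if $b^2=D_0D$ is a perfect square, so under our hypothesis every relevant $Q$ has $a\neq0$ and $E_Q$ is a genuine semicircle with irrational endpoints; hence each vertical ray meets $E_Q$ in at most one, transversal, point, and the crossing appears/disappears from the ray exactly as $z$ crosses $E_Q$. The forms moreover split into pairs $\{Q,-Q\}$ sharing $E_Q=E_{-Q}$ but with leading coefficients of opposite sign, and since $\chi_{D_0}(-Q)=-\chi_{D_0}(Q)$ (valid as $D_0<0$) and $(-Q)_z=-Q_z$, the corresponding summands are identically equal. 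Thus each geodesic is represented in the crossing sum of \eqref{eqn:Pdef} by its unique member with $a<0$, and the total jump of $\mathcal{F}_{0,N,D_0,D,s}$ across a generic point of $E_Q$ is a constant $J_Q$ (namely $\pm4\chi_{D_0}(Q)\varphi_s^*(1)$), independent of the point on the geodesic.

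\emph{Step 3 (matching the jumps and the half-value).} Fix a generic $p$ lying on a single geodesic $E_{Q_0}$ with $a_0<0$ and $N\mid a_0$. Within each chamber of $\H\setminus E_{D_0D}$ the function $\mathcal{P}_{D_0,D,s}$ is locally constant, equal to minus the sum of the (constant) jumps $J_Q$ over the finitely many crossings strictly above $z$, since the base term of \eqref{eqn:Pdef} vanishes off the geodesics. Because the jump terms in \eqref{eqn:Pdef} are by definition exactly the jumps of $\mathcal{F}_{0,N,D_0,D,s}$, and because crossing $E_{Q_0}$ removes precisely the single crossing at $p$ from the ray, I get $\mathcal{P}(p^+)-\mathcal{P}(p^-)=J_{Q_0}=\mathcal{F}(p^+)-\mathcal{F}(p^-)$, so the one-sided limits of $\mathcal{G}_{D_0,D,s}$ agree across $E_{Q_0}$. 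Finally, evaluating the base correction term at $p$ using the half-value convention of Definition \ref{LocalMaassDefn}(3) gives $\mathcal{F}(p^+)-\mathcal{F}(p)=\tfrac12 J_{Q_0}$, whence $\mathcal{P}(p)=\mathcal{P}(p^+)-\tfrac12 J_{Q_0}=\tfrac12\bigl(\mathcal{P}(p^+)+\mathcal{P}(p^-)\bigr)$; thus $\mathcal{P}$, like $\mathcal{F}$, takes the half-value at $p$, and $\mathcal{G}(p)=\tfrac12\bigl(\mathcal{G}(p^+)+\mathcal{G}(p^-)\bigr)$ equals the common one-sided limit. This yields continuity across and on each geodesic.

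\emph{Main obstacle.} I expect the delicate part to be the bookkeeping of Step 3: confirming that as $z$ crosses $E_{Q_0}$ exactly one crossing point enters or leaves the vertical-ray sum, precisely at its base, and that the base correction term in \eqref{eqn:Pdef} is exactly what reconciles the half-value conventions for $\mathcal{F}$ and $\mathcal{P}$. One must also handle the measure-zero set of points lying simultaneously on several geodesics, applying the argument to each geodesic through the point and summing the jumps, and invoke the local uniform convergence for $\re(s)>1$ that legitimizes the term-by-term analysis.
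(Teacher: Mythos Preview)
Your proposal is correct and follows essentially the same approach as the paper: both arguments show that $\mathcal{P}_{D_0,D,s}$ reproduces exactly the vertical one-sided jumps (and half-values) of $\mathcal{F}_{0,N,D_0,D,s}$ across the geodesics, so that their difference is continuous. The paper's version is slightly more formal---it telescopes the definition \eqref{eqn:Pdef} directly to obtain $\lim_{\delta\to0^+}\bigl(\mathcal{P}(z\pm i\delta)-\mathcal{P}(z)\bigr)=\lim_{\varepsilon\to0^+}\bigl(\mathcal{F}(z\pm i\varepsilon)-\mathcal{F}(z)\bigr)$ without ever naming the jump $J_Q$---whereas you unpack the geometry and compute the jump explicitly; but the underlying mechanism is the same.
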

\begin{proof}
In each connected component of $\HH\setminus E_D$, the function $\mathcal{F}_{0,N,D_0,D,s}$ is continuous and $\mathcal{P}_{D_0,D,s}$ is constant.  Suppose now that $z\in E_{D}$.  By continuity in each connected component, it suffices to show that for each sign $\pm$
$$
\lim_{\delta\to 0^+} \mathcal{G}_{D_0,D,s}\left(z\pm i\delta\right) = \mathcal{G}_{D_0,D,s}(z).
$$
Using \eqref{eqn:Pdef} and continuity at $z+i\delta$ for $\delta>0$, we compute (in the following we suppress the dependence on $N$ and the weight $0$ in $\mathcal{F}_{0,N,D_0,D,s}$ for ease of notation)
\begin{multline*}
\lim_{\delta \to 0^+}\!\left(\mathcal{P}_{D_0,D,s}\!\left(z+ i\delta\right) - \mathcal{P}_{D_0,D,s}(z)\right) =\! -\hspace{-.1in}\sum_{\substack{ t> 0\\  \exists Q\in \QQ_{D_0D}\\ Q_{z+it}=0>a\\ N\mid a}}\!\!\lim_{\varepsilon\to 0^+}\!\left(\mathcal{F}_{D_0,D,s}\!\left(z+it+i\varepsilon\right)-\mathcal{F}_{D_0,D,s}\!\left(z+it-i\varepsilon\right)\right)\\
+\!\!\sum_{\substack{ t> 0\\  \exists Q\in \QQ_{D_0D}\\ Q_{z+it}=0>a\\ N\mid a}}\!\!\lim_{\varepsilon\to 0^+}\!\left(\mathcal{F}_{D_0,D,s}\!\left(z+it+i\varepsilon\right)-\mathcal{F}_{D_0,D,s}\!\left(z+it-i\varepsilon\right)\right)+\lim_{\varepsilon\to 0^+}\!\left(\mathcal{F}_{D_0,D,s}\!\left(z+i\varepsilon\right)-\mathcal{F}_{D_0,D,s}\!\left(z\right)\right)\\
= \lim_{\varepsilon\to 0^+}\!\left(\mathcal{F}_{D_0,D,s}\!\left(z+i\varepsilon\right)-\mathcal{F}_{D_0,D,s}\!\left(z\right)\right).
\end{multline*}
Similarly, we have 
\begin{multline*}
\lim_{\delta \to 0^+}\!\left(\mathcal{P}_{D_0,D,s}\!\left(z- i\delta\right) - \mathcal{P}_{D_0,D,s}(z)\right)=\! -\hspace{-.1in}\sum_{\substack{ t\geq 0\\  \exists Q\in \QQ_{D_0D}\\ Q_{z+it}= 0>a\\ N\mid a}}\!\!\lim_{\varepsilon\to 0^+}\!\left(\mathcal{F}_{D_0,D,s}\!\left(z+it+i\varepsilon\right)-\mathcal{F}_{D_0,D,s}\!\left(z+it-i\varepsilon\right)\right)\\
+\!\!\sum_{\substack{ t> 0\\  \exists Q\in \QQ_{D_0D}\\ Q_{z+it}=0>a\\ N\mid a}}\!\!\lim_{\varepsilon\to 0^+}\!\left(\mathcal{F}_{D_0,D,s}\!\left(z+it+i\varepsilon\right)-\mathcal{F}_{D_0,D,s}\!\left(z+it-i\varepsilon\right)\right)+\lim_{\varepsilon\to 0^+}\!\left(\mathcal{F}_{D_0,D,s}\!\left(z+i\varepsilon\right)-\mathcal{F}_{D_0,D,s}\!\left(z\right)\right)\\
= \lim_{\varepsilon\to 0^+}\left(\mathcal{F}_{D_0,D,s}\left(z-i\varepsilon\right)-\mathcal{F}_{D_0,D,s}\left(z\right)\right).
\end{multline*}

\end{proof}

We now explicitly compute $\mathcal{P}_{D_0,D,s}$.
\begin{proposition}\label{prop:Pval}
If $\re(s)\geq \frac{3}{4}$, then 
\begin{equation}\label{eqn:Pval}
\mathcal{P}_{D_0,D,s}(z)= 4 \varphi_s^*(1)\sum_{\substack{Q\in \QQ_{D_0D}\\ Q_{z}>0>a\\ N\mid a}} \chi_{D_0}(Q)+2\varphi_s^*(1)\sum_{\substack{Q\in \QQ_{D_0D}\\ Q_{z}=0>a\\ N\mid a}} \chi_{D_0}(Q).
\end{equation}
\end{proposition}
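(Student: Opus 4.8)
The plan is to compute $\mathcal{P}_{D_0,D,s}$ directly from its definition \eqref{eqn:Pdef} by tracking the jumps of $\mathcal{F}_{0,N,D_0,D,s}$ along the vertical ray from $z$ up to $i\infty$. The starting observation is that in the defining sum for $\mathcal{F}_{0,N,D_0,D,s}$ the only factor that can be discontinuous is $\sgn(Q_z)$: the argument $w_Q := \frac{D_0 D y^2}{|Q(z,1)|^2}$ of $\varphi_s^*$ is continuous everywhere, and by \eqref{eqn:QQz} it satisfies $w_Q \leq 1$ with equality exactly on the geodesic $C_Q := \{Q_z = 0\}$. Hence as $z$ approaches $C_Q$ from either side, $w_Q \to 1^-$ and $\varphi_s^*(w_Q) \to \varphi_s^*(1)$ continuously, so that each term $\chi_{D_0}(Q)\sgn(Q_z)\varphi_s^*(w_Q)$ can only jump across $C_Q$, and the size of the jump is $\varphi_s^*(1)$ times the jump of $\sgn(Q_z)$. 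Away from $E_{D_0 D}$ the function $\mathcal{F}_{0,N,D_0,D,s}$ is real-analytic and $\mathcal{P}_{D_0,D,s}$ is locally constant.

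Next I would analyze a single crossing. Writing the moving point at height $t$ as $x+it$, one has $t\,Q_{x+it} = a t^2 + Q(x,1)$; for a form with $a < 0$ this is a downward parabola, so if $Q(x,1) > 0$ there is a unique positive height $t^\ast = \sqrt{-Q(x,1)/a}$ with $Q_z = 0$, and moving upward through $t^\ast$ the sign $\sgn(Q_z)$ flips from $+1$ to $-1$, contributing $-2\chi_{D_0}(Q)\varphi_s^*(1)$ to the jump of $\mathcal{F}_{0,N,D_0,D,s}$. The crucial point is that $C_Q$ is shared by the companion form $-Q = [-a,-b,-c]$, which again satisfies $N \mid (-a)$ and lies in $\QQ_{D_0 D}$; since $(-Q)_z = -Q_z$ and, because $D_0 < 0$, the genus character satisfies $\chi_{D_0}(-Q) = \bigl(\tfrac{D_0}{-1}\bigr)\chi_{D_0}(Q) = -\chi_{D_0}(Q)$ by \eqref{eqn:genusdef}, the form $-Q$ (with $-a > 0$) contributes the \emph{same} amount $-2\chi_{D_0}(Q)\varphi_s^*(1)$ to the jump. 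Thus the total jump of $\mathcal{F}_{0,N,D_0,D,s}$ across the crossing attached to the pair $\{Q,-Q\}$ equals $-4\chi_{D_0}(Q)\varphi_s^*(1)$.

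I would then assemble the two pieces of \eqref{eqn:Pdef}. The ray from $z$ meets $C_Q$ strictly above $z$ exactly when $a < 0$ and $Q_z(z) > 0$ (equivalently $t^\ast > y$), so these crossings are indexed precisely by the forms with $Q_z > 0 > a$ and $N \mid a$; negating the accumulated jumps produces the first term $4\varphi_s^*(1)\sum_{Q_z > 0 > a,\, N\mid a}\chi_{D_0}(Q)$. For the final summand of \eqref{eqn:Pdef}, condition (3) of Definition \ref{LocalMaassDefn} gives $\mathcal{F}_{0,N,D_0,D,s}(z) = \tfrac12\bigl(\lim_{\varepsilon\to0^+}\mathcal{F}_{0,N,D_0,D,s}(z+i\varepsilon) + \lim_{\varepsilon\to0^+}\mathcal{F}_{0,N,D_0,D,s}(z-i\varepsilon)\bigr)$, so that $-\lim_{\varepsilon\to0^+}\bigl(\mathcal{F}_{0,N,D_0,D,s}(z+i\varepsilon) - \mathcal{F}_{0,N,D_0,D,s}(z)\bigr)$ equals $-\tfrac12$ times the jump at $z$; applying the jump computation to the forms whose geodesic passes through $z$ (those with $Q_z = 0 > a$) yields the second term $2\varphi_s^*(1)\sum_{Q_z = 0 > a,\, N\mid a}\chi_{D_0}(Q)$, and adding the two pieces gives \eqref{eqn:Pval}.

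The main obstacle I expect is the sign bookkeeping around the pairing $Q \leftrightarrow -Q$: it is exactly this symmetry, combined with $\chi_{D_0}(-Q) = -\chi_{D_0}(Q)$ for $D_0 < 0$, that upgrades the naive factor $2$ coming from a single sign flip to the factor $4$ appearing in \eqref{eqn:Pval}, and getting the orientation of each flip and the direction of approach correct is where errors are easiest to make. A secondary technical point is to justify that only an absolutely convergent family of geodesics is crossed along the ray, so that the jumps may be summed term by term and coincident geodesics through a single point handled simultaneously; this follows from the same quadratic-form bounds (via \eqref{eqn:QQz} and Lemma 5.1 of \cite{BKK}) that guarantee convergence of $\mathcal{F}_{0,N,D_0,D,s}$ and the well-definedness of $\varphi_s^*(1)$ for $\re(s) \geq \tfrac34$.
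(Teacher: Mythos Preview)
Your proposal is correct and follows essentially the same approach as the paper: both compute the jumps of $\mathcal{F}_{0,N,D_0,D,s}$ termwise from the series \eqref{eqn:Fdefslarge}, pair $Q$ with $-Q$, and use $\chi_{D_0}(-Q)=-\chi_{D_0}(Q)$ for $D_0<0$ to obtain the factor $4$. The only cosmetic differences are that the paper invokes the identity theorem to reduce to $\re(s)$ large before using the series, and handles the half-jump via the convention $\sgn(0)=0$ rather than via condition~(3) of Definition~\ref{LocalMaassDefn}; these are equivalent.
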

\begin{proof}
Note that both sides of \eqref{eqn:Pval} are analytic in for $\re(s)>\frac{3}{4}$ and the limit $s\to \frac{3}{4}^+$ exists.  Hence by the identity theorem, it is sufficient to prove the identity for $\re(s)$ sufficiently large.  

We use the representation \eqref{eqn:Fdefslarge} to explicitly compute $\mathcal{P}_{D_0,D,s}$ for $s$ sufficiently large.  To compute $\mathcal{P}_{D_0,D,s}$, for $z\in E_D$ we compute
\begin{multline*}
\lim_{\varepsilon\to 0^+} \left(\mathcal{F}_{0,N,D_0,D,s}\left(z+i\varepsilon\right)-\mathcal{F}_{0,N,D_0,D,s}\left(z-i\varepsilon\right)\right) = 2\varphi_s^*\left(1\right) \sum_{\substack{Q\in \QQ_{D_0D}\\ Q_{z}=0\\ a<0\\ N\mid a}} \left(\chi_{D_0}\left(-Q\right)-\chi_{D_0}(Q)\right)\\
=-4\varphi_s^*\left(1\right) \sum_{\substack{Q\in \QQ_{D_0D}\\ Q_{z}=0\\ a<0\\ N\mid a}} \chi_{D_0}(Q),
\end{multline*}
where the last line follows from $D_0<0$ so $\chi_{D_0}(-Q)=-\chi_{D_0}(Q)$.   Furthermore, with the convention $\sgn(0)=0$ we obtain
\begin{multline*}
\lim_{\varepsilon\to 0^+} \left(\mathcal{F}_{0,N,D_0,D,s}\left(z+i\varepsilon\right)-\mathcal{F}_{0,N,D_0,D,s}\left(z\right)\right) = \varphi_s^*\left(1\right) \sum_{\substack{Q\in \QQ_{D_0D}\\ Q_{z}=0\\ a<0\\ N\mid a}} \left(\chi_{D_0}\left(-Q\right)-\chi_{D_0}(Q)\right)\\
=-2\varphi_s^*\left(1\right) \sum_{\substack{Q\in \QQ_{D_0D}\\ Q_{z}=0\\ a<0\\ N\mid a}} \chi_{D_0}(Q).
\end{multline*}
\end{proof}

\subsection{Proof of Theorem \ref{mainthm}}

We now have all of the preliminaries necessary to prove Theorem \ref{mainthm}.
\begin{proof}[Proof of Theorem \ref{mainthm}]

Suppose that $D_0D$ is not a square.  Then by Lemma \ref{lem:Gcont}, $\mathcal{G}_{D_0,D,\frac{3}{4}}$ is continuous.  Proposition \ref{prop:Pval} then implies that the locally constant part of $\mathcal{F}_{0,N,D_0,D}$ is (since $\varphi_{s}$ is continuous as a function of $s$) 
\[
4 \varphi_{\frac{3}{4}}^*(1)\sum_{\substack{Q\in \QQ_{D_0D}\\ Q_{z}>0>a\\ N\mid a}} \chi_{D_0}(Q)+2 \varphi_{\frac{3}{4}}^*(1)\sum_{\substack{Q\in \QQ_{D_0D}\\ Q_{z}=0>a\\ N \mid a}} \chi_{D_0}(Q).
\]
Taking the limit $z\to x\in \Q$, the second sum becomes empty and $Q_z>0$ becomes $Q(x,1)>0$.  This yields the function 
\begin{equation}\label{eqn:Px}
4 \varphi_{\frac{3}{4}}^*(1)\sum_{\substack{Q\in \QQ_{D_0D}\\ ax^2+bx+c>0>a\\ N\mid a}} \chi_{D_0}(Q).
\end{equation}
We now claim that \eqref{eqn:Px} is $\Gamma_0(N)$-invariant if and only if $\f_{1,N,D_0,D}$ is in the space spanned by Eisenstein series.    

Suppose that $\f_{1,N,D_0,D}$ is in the space spanned by Eisenstein series and choose a linear combination of Maass-Eisenstein series $\mathcal{E}_{0,N,D_0,D}$ for which 
\[
\xi_0\left(\mathcal{E}_{0,N,D_0,D}\right) = \f_{1,N,D_0,D}.
\]
Since $\xi_{0}\left(\mathcal{F}_{0,N,D_0,D}\right) = \f_{1,N,D_0,{D}}$ by Lemma \ref{lem:xireg}, we obtain that $\mathcal{F}_{0,N,D_0,D}-\mathcal{E}_{0,N,D_0,D}$ is locally holomorphic and $\Gamma_0(N)$-invariant.

We now show that the operator $\mathcal{D}:=\frac{1}{2\pi i}\frac{\partial}{\partial z}$ also annihilates $\mathcal{F}_{0,N,D_0,D}-\mathcal{E}_{0,N,D_0,D}$, from which we obtain that $\mathcal{F}_{0,N,D_0,D}-\mathcal{E}_{0,N,D_0,D}$ is a $\Gamma_0(N)$-invariant local constant.  For this, we recall \eqref{eqn:DPhi*} and \eqref{eqn:xiPhi*}. Since there exist non-zero constants $c_1$ and $c_2$ for which $\mathcal{F}_{0,N,D_0,D}=c_1\Phi_0^*\left(P_{\frac{1}{2},|D|}\right)$ by \eqref{eqn:Phi1/2} and $\f_{1,N,D_0,D}=c_2\Phi_1\left(\xi_{\frac{1}{2}}\left(P_{\frac{1}{2},|D|}\right)\right)$ by \eqref{eqn:fchiDdef}, we conclude that
\[
\xi_{0}\left(\mathcal{F}_{0,N,D_0,D}\right) =\frac{2c_1}{c_2}\f_{1,N,D_0,D}
\]
and 
\[
\mathcal{D}\left(\mathcal{F}_{0,N,D_0,D}\right)=- \left(\frac{c_1}{2\pi c_2}\right)\f_{1,N,D_0,D}.
\]
Therefore \eqref{eqn:FxiD} (and hence \eqref{decomplocalmaass}) holds and the ratio of the constants in \eqref{decomplocalmaass} is 
\[
\frac{\alpha_D}{\beta_D}=-\frac{1}{4\pi}.
\]
  We next recall that the ratio of the constants for the Eisenstein series are identical.   All of the $\Gamma_0(N)$-invariant Maass-Eisenstein series are of the form 
\[
\left[\sum_{M\in \Gamma_{\infty}\backslash\Gamma_0(N)} y^s\Big|_0 M\gamma\right]_{s=1}
\]
for some $\gamma\in \Gamma_0(N)\backslash \SL_2(\Z)$.  Since $\xi_0$ and $\mathcal{D}$ commute with the slash operator, we have 
\begin{align*}
\xi_0\left( \sum_{M\in \Gamma_{\infty}\backslash\Gamma_0(N)} y^s\Big|_0 M\gamma \right)& = s\sum_{M\in \Gamma_{\infty}\backslash\Gamma_0(N)} y^{s-1}\Big|_0 M\gamma,\\
\mathcal{D}\left( \sum_{M\in \Gamma_{\infty}\backslash\Gamma_0(N)} y^s\Big|_0 M\gamma \right)& =-\frac{s}{4\pi}\sum_{M\in \Gamma_{\infty}\backslash\Gamma_0(N)} y^{s-1}\Big|_0 M\gamma.
\end{align*}
It follows that 
\[
\mathcal{D}\left(\mathcal{E}_{0,N,D_0,D}\right) = -\frac{1}{4\pi} \xi_{0}\left(\mathcal{E}_{0,N,D_0,D}\right)=-\frac{1}{4\pi} \f_{1,N,D_0,D}.
\]
Hence we see that the difference $\mathcal{F}_{0,N,D_0,D}-\mathcal{E}_{0,N,D_0,D}$ is locally holomorphic and annihilated by $\mathcal{D}$ away from $E_D$.  It follows that $\mathcal{F}_{0,N,D_0,D}-\mathcal{E}_{0,N,D_0,D}$ is a $\Gamma_0(N)$-invariant local constant.  

Since $\mathcal{E}_{0,N,D_0,D}$ does not contribute to the polynomial part, we conclude that the polynomial part of $\mathcal{F}_{0,N,D_0,D}$ is $\Gamma_0(N)$-invariant.  

Conversely, assume that $\f_{1,N,D_0,D}$ is not in the space spanned by Eisenstein series.  By the above argument, we may subtract a linear combination of Eisenstein series without affecting the polynomial part, so that we may assume without loss of generality that $\f_{1,N,D_0,D}$ is cuspidal.  Since $S_{2}(N)$ is one-dimensional, we have that $\f_{1,N,D_0,D}=a_D f$, where $a_D\neq 0$ and $f\in S_{2}(N)$ is the unique newform.  Using the decomposition \eqref{decomplocalmaass} of $\mathcal{F}_{0,N,D_0,D}$, for every $M\in \Gamma_0(N)$ we have 
\begin{equation}\label{eqn:M-1}
0=\mathcal{F}_{0,N,D_0,D}\Big|_{0}(M-1) = \mathcal{P}_{D_0,D,\frac{3}{4}}\Big|_{0}(M-1) + a_D \left(\mathcal{E}_{f}\Big|_{0}(M-1) + f^*\Big|_0(M-1)\right).
\end{equation}
Hence if $\mathcal{E}_{f}(z)\Big|_{0}(M-1) + f^*(z)\Big|_0(M-1)\neq 0$, we obtain that $\mathcal{P}_{D_0,D,\frac{3}{4}}(z)\Big|_{0}(M-1)\neq 0$.  However, $\mathcal{E}_{f}(z)\Big|_{0}(M-1) + f^*(z)\Big|_0(M-1)$ is a constant independent of $z$. We choose $M=M_0$ to be a matrix sending $x_{N,1}$ to $x_{N,2}$ and verify that this difference is non-zero (this may be done by computing the periods of $f$). Note that this choice is made independent of $D$ and so the non-vanishing of $\mathcal{P}_{D_0,D,\frac{3}{4}}\Big|_{0}(M_0-1)$ for one $D$ implies that $\mathcal{E}_{f}\Big|_{0}(M_0-1) + f^*\Big|_0(M_0-1)\neq 0$.  One such choice of $D$ may be chosen from Table \ref{tab:discs} (e.g. $D=-11$ for $N=32$), which hence validates our choice of $M_0$.  Since \eqref{eqn:M-1} is independent of $z$, we then take the limit $z\to x\in\Q$ to show that \eqref{eqn:Px} is not $\Gamma_0(N)$-invariant.

We have hence concluded that $\f_{1,N,D_0,D}$ is in the space spanned by Eisenstein series if and only if \eqref{eqn:Px} is invariant under the action of $M_0$.  Now note that \eqref{eqn:Px} is a (non-zero) constant multiple of $F_{0,N,D,D_0}(x)$, defined in \eqref{eqn:F1}.  By Lemma \ref{lem:f1fchi}, the function $f_{1,N,D,D_0}$ is hence in the space spanned by Eisenstein series if and only if $F_{0,N,D,D_0}(x)$ is invariant under $M_0$.  

We finally use Lemma \ref{lem:Lval} to conclude the connection to central $L$-values.  However, we first have to show that an $m_0$ admissible for $D_0$ exists which satisfies the conditions in Lemma \ref{lem:Lval}.   

If the cuspidal part of every $f_{1,N,D,D_0}$ vanished, then $F_{0,N,D,D_0}$ would be $\Gamma_0(N)$-invariant for every $D$.  One easily checks (by a finite calculation) that this is not true for at least one choice, as given in Table \ref{tab:discs}. Moreover, picking one such choice for $m_0$, one easily checks that $L\left(f\otimes \chi_{m_0},1\right)\neq 0$.  

\end{proof}
\subsection{Connections to combinatorial questions and the proofs of Corollaries \ref{maincor} and \ref{cor:sumtwocubes}}

In this section, we recall relations between vanishing of central $L$-values and combinatorial questions. The first is the congruent number problem.
\begin{proof}[Proof of Corollary \ref{maincor}]
The first claim follows immediately from Theorem \ref{mainthm}. The second claim follows by combining Theorem \ref{mainthm} with the well-known fact (see the introduction in \cite{Tunnell} or Koblitz's book \cite{Koblitz} for a complete description) that $D$ is a congruent number if and only if the group of rational points of $E_D$ is infinite. As pointed out in \cite{Tunnell}, if $L(E_D,1)\neq 0$, then Coates--Wiles \cite{Coates-Wiles} implies that there are only finitely many rational points. The converse statement follows from BSD.
\end{proof}
We next consider the case of the elliptic curve $E_{27}$ given by $X^3+Y^3=1$.
\begin{proof}[Proof of Corollary \ref{cor:sumtwocubes}]
Applying Coates--Wiles \cite{Coates-Wiles} again, we obtain one direction of the claim. Assuming the Birch and Swinnerton-Dyer conjecture, there are infinitely many rational points on $E_{27,D}$ if and only if $L(E_{27,D},1)=0$. 

\end{proof}

\subsection{Proof of Corollary \ref{cor:SDodd}}
Here we conclude with the proof of Corollary \ref{cor:SDodd}.
\begin{proof}[Proof of  Corollary \ref{cor:SDodd}]
By Theorem \ref{mainthm}, it suffices to show that $F_{0,32,D,-3}(0)\neq F_{0,32,D,-3}\left(\frac{1}{3}\right)$.  Directly by definition \eqref{eqn:F1}, for $x\in\Q$ we have
\begin{equation}\label{eqn:Fcong}
F_{0,32,D,-3}(x)\equiv \#\left\{ Q=[a,b,c]\in \mathcal{Q}_{-3D}: \chi_{-3}(Q)\neq 0, 32|a, Q(x)>0>a\right\}\pmod{2}.
\end{equation}
However, since $D\not\equiv 0\pmod{3}$, we have that $\left(a,b,c,-3\right)=1$ and hence $\chi_{-3}(Q)\neq 0$ for every $Q\in  \mathcal{Q}_{-3D}$.  Noting that 
\[
Q\left(\frac{1}{3}\right) = \frac{a}{9}+ \frac{b}{3}+c,
\]
we have 
\[
F_{0,32,D,-3}\left(\frac{1}{3}\right)\equiv \# \mathcal{S}_D\pmod{2}.
\]

Hence, to prove the corollary it suffices to show that $F_{0,32,D,-3}(0)$ is even under the given assumptions on $D$.  For $x=0$, the condition on $Q=[a,b,c]\in \mathcal{Q}_{-3D}$ in \eqref{eqn:Fcong} is simply $c>0>a$ and $32|a$.  Since this condition is independent of $b$, for any $[a,b,c]$ satisfying the given conditions, we have that $[a,-b,c]$ is also in the set.  This matches pairs of quadratic forms whenever $b\neq 0$ and we obtain  
\begin{equation}\label{eqn:Fcong2}
F_{0,32,D,-3}(0)\equiv \#\left\{ Q=[a,0,c]\in \mathcal{Q}_{-3D}: 32|a, c>0>a\right\}\pmod{2}.
\end{equation}
However, since $D\equiv  -3\pmod{8}$, we have $-3D\equiv 1\pmod{8}$, while the discriminant of $[a,0,c]$ is $-4ac$, which is divisible by $4$.  The set on the right-hand side of \eqref{eqn:Fcong2} is hence empty, and we obtain the desired result.
\end{proof}

\end{document}